\newtheorem{theorem}{Theorem}
\newtheorem{definition}{Definition}
\newtheorem{lemma}{Lemma}
\newtheorem{assumption}{Assumption}  
\newtheorem{proposition}{Proposition}  
\newtheorem{corollary}{Corollary}  
\renewcommand{\algocf@captiontext}[2]{#1\algocf@typo. \AlCapFnt{}#2}
\def\@algocf@capt@plain{top}
\renewcommand{\algocf@makecaption}[2]{%
  \addtolength{\hsize}{\algomargin}%
  \sbox\@tempboxa{\algocf@captiontext{#1}{#2}}%
  \ifdim\wd\@tempboxa > \hsize
    \hskip .5\algomargin
    \parbox[t]{\hsize}{\algocf@captiontext{#1}{#2}}%
  \else
    \global\@minipagefalse
    \hbox to\hsize{\box\@tempboxa}
  \fi
  \addtolength{\hsize}{-\algomargin}%
}
\newcommand{\indep}{\mathrel{\text{\scalebox{1.07}{$\perp\mkern-10mu\perp$}}}}
\title{Finite sample-optimal adjustment sets in linear Gaussian \\ causal models}
\author[1]{Nadja Rutsch\thanks{\,Corresponding author: \texttt{n.rutsch@vu.nl}}}
\author[2]{Sara Magliacane}
\author[1]{Stéphanie L. van der Pas}
\affil[1]{Department of Mathematics, Vrije Universiteit Amsterdam, The Netherlands}
\affil[2]{AMLab, Informatics Institute, University of Amsterdam, The Netherlands}
\date{\today} 
\begin{document}
\maketitle

\begin{abstract}
Traditional covariate selection methods for causal inference focus on achieving unbiasedness and asymptotic efficiency. In many practical scenarios, researchers must estimate causal effects from observational data with limited sample sizes or in cases where covariates are difficult or costly to measure. Their needs might be better met by selecting adjustment sets that are finite sample‑optimal in terms of mean squared error. In this paper, we aim to find the adjustment set that minimizes the mean squared error of the causal effect estimator, taking into account the joint distribution of the variables and the sample size. We call this finite sample‑optimal set the MSE‑optimal adjustment set and present examples in which the MSE‑optimal adjustment set differs from the asymptotically optimal adjustment set. To identify the MSE‑optimal adjustment set, we then introduce a sample size criterion for comparing adjustment sets in linear Gaussian models. We also develop graphical criteria to reduce the search space for this adjustment set based on the causal graph. In experiments with simulated data, we show that the MSE‑optimal adjustment set can outperform the asymptotically optimal adjustment set in finite sample size settings, making causal inference more practical in such scenarios.
\end{abstract}

\vspace{0.5cm}
\noindent\textbf{Keywords:} Adjustment set; Average treatment effect; Causality; Efficiency; Graphical model.

% --------------------------------------------------
% Main content 
% --------------------------------------------------

\section{Introduction}

\begin{figure}
    \centering
       \begin{subfigure}[b]{0.21\textwidth}
        \centering
        \begin{tikzpicture}[
            font=\fontsize{8}{10}\selectfont,
            node distance=1.3cm,
            on grid,
            auto,
            block/.style={circle, draw, inner sep=0pt, outer sep=0pt, minimum size=0.5cm}
        ]
        
        \node [block] (A) {A};
        \node [block, above right=of A] (V2) {$W_2$};
        \node [block, right=of A, below right=of V2] (Y) {$Y$};
        \node [block, above right=of V2] (O1) {$O_1$};
        \node [block, above left=of V2] (V1) {$W_1$};         
        \node [block, below right=of A] (O2) {$O_2$};
        
        \draw[-{Latex[length=2mm]}, blue] (A) -- (Y) node[midway, above, black] {$\tau=3$}; 
        \draw[-{Latex[length=2mm]}] (V2) -- (O1) node[midway, right, pos=0.1] {$2$};           
        \draw[-{Latex[length=2mm]}] (V1) -- (O1)node[midway, above]{$2$};                   
        \draw[-{Latex[length=2mm]}] (O1) -- (Y) node[midway, right] {$5$};                 
        \draw[-{Latex[length=2mm]}] (V1) -- (A) node[midway, left] {$-1$};                    
        \draw[-{Latex[length=2mm]}] (V2) -- (A) node[midway, left, pos=0.1] {$0.1$};           
        \draw[-{Latex[length=2mm]}] (O2) -- (A) node[midway, left, pos=0.2] {$40$};           
        \draw[-{Latex[length=2mm]}] (O2) -- (Y) node[midway, right, pos=0.2] {$0.5$};        
                
\end{tikzpicture}
        \caption{$\mathcal{M}_1$}
    \end{subfigure}%    
    \begin{subfigure}[b]{0.28\textwidth}
        \centering
        \includegraphics[width=0.95\textwidth]{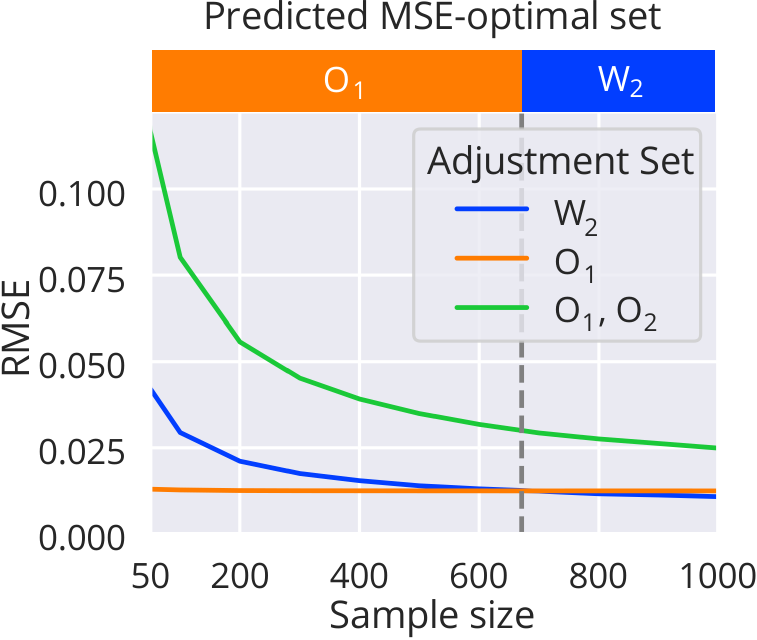}
        \caption{RMSE of $\hat \tau$ in $\mathcal{M}_1$}
    \end{subfigure}%  
    \begin{subfigure}[b]{0.21\textwidth}
    \centering
        \begin{tikzpicture}[
            font=\fontsize{8}{10}\selectfont,
            node distance=1.3cm,
            on grid,
            auto,
            block/.style={circle, draw, inner sep=0pt, outer sep=0pt, minimum size=0.5cm}
        ]
        
        \node [block] (A) {A};
        \node [block, above right=of A] (M) {$C_1$};
        \node [block, below right=of M] (Y) {$Y$};
        \node [block, above=of Y, above right=of M] (O1) {$O_1$};
        \node [block, above=of A, above left=of M] (I1) {$W_1$};
        \node [block, below right=of A] (O2) {$O_2$};
        
        \draw[-{Latex[length=2mm]}, blue] (A) -- (Y) node[midway, above, black] {$\tau=0.29$};  
        \draw[-{Latex[length=2mm]}] (O1) -- (M) node[midway, left, pos=0.1] {$6$};             
        \draw[-{Latex[length=2mm]}] (I1) -- (M) node[midway, right, pos=0.1] {$1.33$};          
        \draw[-{Latex[length=2mm]}] (O1) -- (Y) node[midway, right] {$0.71$};                 
        \draw[-{Latex[length=2mm]}] (I1) -- (A) node[midway, left] {$0.55$};                  
        \draw[-{Latex[length=2mm]}] (O2) -- (A) node[midway, left, pos=0.2] {$1.1$};           
        \draw[-{Latex[length=2mm]}] (O2) -- (Y) node[midway, right, pos=0.2] {$0.14$};         
                
\end{tikzpicture}
        \caption{$\mathcal{M}_2$}
    \end{subfigure}
    \begin{subfigure}[b]{0.28\textwidth}
        \centering
        \includegraphics[width=0.9\textwidth]{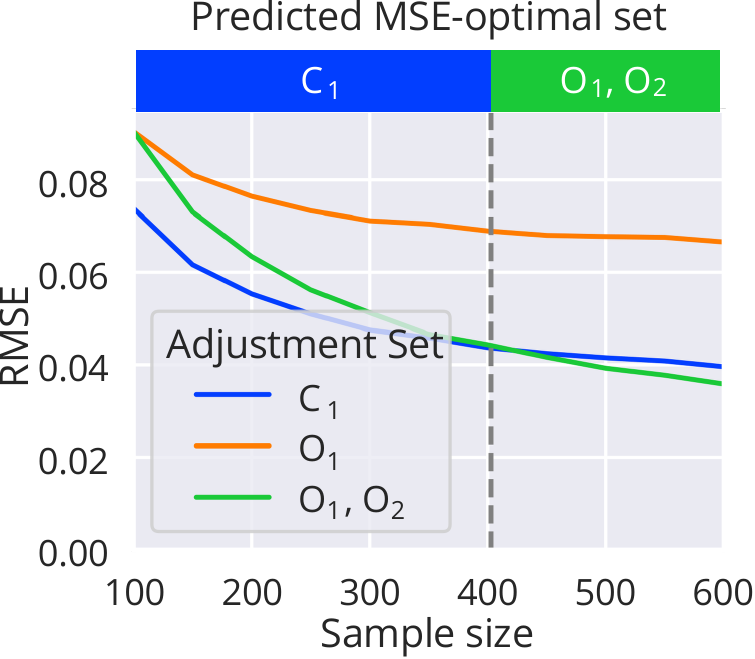}
        \caption{RMSE of $\hat \tau$ in $\mathcal{M}_2$}
    \end{subfigure}
 
    \caption{Two toy examples of causal models $\mathcal{M}_1$ and $\mathcal{M}_2$ and the root-mean squared error (RMSE) of the ordinary least squares estimator $\hat \tau$ of the causal effect $\tau$ of $A$ on $Y$ in $\mathcal{M}_1$ and $\mathcal{M}_2$, using different adjustment sets (10000 random seeds per set and sample size). The variables in $\mathcal{M}_1$ and $\mathcal{M}_2$ follow linear equations with the shown coefficients and additive Gaussian noise with a variance of 1 and a mean of 0. In both examples, $O=\{O_1, O_2\}$ is the asymptotically optimal adjustment set \citep{henckelGraphicalCriteriaEfficient2022a}. Depending on the sample size, a different adjustment set than $O$ gives the lowest root-mean squared error, as shown in plots (b) and (d). We show the predicted MSE-optimal set based on our sample size criterion on the top of the plot, while the dashed line shows the sample size for which we predict a change in an adjustment set outperforming another.} 
    \label{fig:examples}
\end{figure}

Variable selection is of the utmost importance for trustworthy causal inference \citep{brookhart2006variable, pearl2009causality, steiner2010importance}. Causal graphical models provide a powerful framework for understanding the dependencies among variables. These models help identify valid adjustment sets that yield unbiased estimates of causal effects \citep{Shipster2010OnTheValidity, perkovic2015completeCriterion}.

So far, methods based on causal graphs focus on \emph{valid} adjustment sets \citep{rotnitzkyEfficientAdjustmentSets2019, henckelGraphicalCriteriaEfficient2022a}, aiming for unbiasedness and asymptotic efficiency of the causal effect estimator. However, in finite sample size settings, the variance may dominate the bias, such that an \emph{invalid} adjustment set that does not satisfy the criteria for unbiasedness might be more suitable for estimation. By providing criteria that also consider invalid adjustment sets, we allow for extra flexibility in the choice of covariates. This is particularly practical when certain covariates are expensive to measure. Selecting the adjustment set with the smallest estimated mean squared error from a specified set of candidate adjustment sets allows us to omit these difficult-to-measure variables, offering a practical alternative that still provides accurate estimates.

Figure \ref{fig:examples} shows two examples in which invalid adjustment sets outperform the unbiased and asymptotically optimal adjustment set $O$ \citep{henckelGraphicalCriteriaEfficient2022a, rotnitzkyEfficientAdjustmentSets2019} in terms of mean squared error in finite sample cases. In the examples, tolerating a certain amount of omitted variable bias \citep{greene2003econometric, chernozhukov2022long, cinelli2019making} brings a substantial improvement in terms of variance. The interplay between bias and variance in finite samples is also influenced by phenomena such as bias unmasking and bias amplification \citep{Middleton_Scott_Diakow_Hill_2016, pearl2010onAClass, myers2011effects, bhattacharya2007do, wooldridge2016should}, where controlling for additional covariates can increase bias, either by revealing hidden biases or amplifying existing biases, respectively. 

In this paper, we describe how to find the adjustment set that optimizes the mean squared error in linear Gaussian causal models. Unlike previous work, we focus on finite sample properties of the estimator instead of its asymptotic behaviour. As a result, we demonstrate that, in certain settings, deliberately choosing an invalid adjustment set can be beneficial. We derive a sample size criterion which describes the conditions under which this is the case, assuming that the causal effect is estimated with the ordinary least squares estimator. Additionally, we develop graphical criteria to reduce the search space of the MSE-optimal adjustment set, reducing the additional computational effort required to identify it. In experiments on synthetic data, we show that this additional computational effort can be worthwhile in finite samples. Specifically, our method for covariate selection, based on these theoretical findings, matches or exceeds the performance of the asymptotically optimal adjustment set from \cite{henckelGraphicalCriteriaEfficient2022a} in linear Gaussian settings when the causal effect is estimated with ordinary least squares.

\section{Preliminaries}

\subsection{Linear Gaussian causal models}
We consider treatment effect estimation with causal graphical models, specifically with directed acyclic graphs. A causal directed acyclic graph $\mathcal{G}=(\mathcal{V},\mathcal{E})$ consists of a set of nodes $\mathcal{V}$ and a set of edges $\mathcal{E}$, where each node represents a random variable. A directed edge $V_i \rightarrow V_j$ for $i \neq j \in \{1, \dots, d\}$ between two nodes represents the direct causal effect of $V_i$ on $V_j$, and we say that $V_i$ is a \emph{parent} of $V_j$. We denote the set of parents of the variable $V$ in the graph $\mathcal{G}$ by $\mathrm{Pa}(V, \mathcal{G})$.

A sequence of nodes $\pi=(V_1, \dots, V_j)$ forms a \emph{path} if there exists an edge between all consecutive nodes in the sequence. If all edges point in the same direction, i.e. $V_i \rightarrow V_{i+1}$ for all $V_i \in \pi$, the path $\pi$ is a \emph{directed} path from $V_i$ to $V_j$, and $V_j$ is a \emph{descendant} of $V_i$. We use $\mathrm{De}(V_i, \mathcal{G})$ to denote the set of descendants of $V_i$ in $\mathcal{G}$, where we do not consider $V_i$ as a descendant of itself. A node $V$ is a \emph{collider} on a path $\pi$ if $\pi$ contains the structure $U \rightarrow V \leftarrow W$.

We use $X \indep_{\mathcal{G}} Y \mid Z$ to denote d-separation of two nodes $X$ and $Y$ given a set of nodes $Z$ in the graph $\mathcal{G}$. A definition of d-separation is given in Appendix~1 for convenience. Two nodes $X$ and $Y$ are d-connected given $Z$ if they are not d-separated given $Z$, which we denote by $X \not\indep_{\mathcal{G}} Y \mid Z$. Assuming the causal Markov and faithfulness assumptions, the d-separation $X \indep_{\mathcal{G}} Y \mid Z$ in the graph $\mathcal{G}$ corresponds to a conditional independence $X \indep
Y \mid~Z$ in the probability distribution of the corresponding random variables. 
Under these assumptions, the joint probability distribution of the random variables is \emph{Markov} to the graph $\mathcal{G}$, and factorizes as  $
    \mathrm{pr}(V_1, \dots , V_d) = \prod_{i=1}^d \mathrm{pr}\left\{V_i | \text{Pa}(V_i, \mathcal{G})\right\}$, where $\mathrm{pr}\left\{V_i | \text{Pa}(V_i, \mathcal{G})\right\}$ is the conditional probability of $V_i$ given its parents in $\mathcal{G}$. We assume that the following linear Gaussian causal model $\mathcal{M}$ holds: 
\begin{align}
    V_i &= \sum_{V_j \in \text{Pa}(V_i, \mathcal{G})} \beta_{ij} V_j + \epsilon_i, & \epsilon_i \sim \mathcal{N}(0, \sigma_i^2)  \qquad (i =1, \dots, d),
    \label{eq:causal-model}
\end{align}
where the noise terms $\epsilon_1, \dots, \epsilon_d$ are jointly independent.
Under a do-intervention on variable $V_i$ with value $x$, denoted by $do(V_i=x)$, the equation of $V_i$ is replaced with $x$. 

\subsection{Estimating the average treatment effect}
We aim to estimate the average treatment effect of a treatment variable $A$ on an outcome variable $Y$, assuming the following definition of the average treatment effect using the do-operator.
\begin{definition}[Average Treatment Effect $\tau$]
Let \( Y \) be the outcome variable and \( A \) be the treatment variable. The average treatment effect $\tau$ of the treatment \( A \) on the outcome \( Y \) is:
\[
\tau := \frac{\partial}{\partial a} E\{Y \mid \text{do}(A = a)\}. 
\] 
\end{definition}
We assume that all variables in the graph, except $A$ and $Y$, are pre-treatment variables. 
\begin{assumption}[Pre-treatment variables]
    No variables $V_i \in \mathcal{V}\setminus\{A,Y\}$ are descendants of the treatment $A$:
    \begin{equation*}
        \left( \mathcal{V} \setminus \{A,Y\} \right) \cap \mathrm{De}(A, \mathcal{G}) = \emptyset.
        \label{assumption:pretreatment}
    \end{equation*}
\end{assumption}
The pre-treatment assumption implies that we do not have any mediators $M$ in the graph $\mathcal{G}$, i.e. $M(\mathcal{G}) = \emptyset$. We define mediators as variables that block a directed path between the treatment $A$ and the outcome $Y$.
Assuming that all covariates are pre-treatment variables, the average treatment effect equals the coefficient $\beta_{ij}$ with $V_i = Y$ and $V_j = A$ for the linear model \eqref{eq:causal-model}. 

\subsection{Asymptotic optimality of adjustment sets}

Adjustment sets are used to estimate the causal effect of a variable, here the treatment $A$, on another, here the outcome $Y$, from observational data via covariate adjustment. We denote an average treatment effect estimator that uses covariate adjustment with the adjustment set $K$ by $\hat \tau_{K}$. A set of covariates $K$ is a \emph{valid} adjustment set if the estimator $\hat \tau_{K}$ returns an unbiased estimate of the true causal effect $\tau$ under correct model specification, for all probability distributions that are Markov to the causal graph $\mathcal{G}$. Whether an adjustment set is valid can be determined from the causal graph alone, e.g. with the sufficient back-door criterion \citep{pearl1993comment}, or a necessary and sufficient criterion developed by \cite{Shipster2010OnTheValidity} and \cite{perkovi2018complete}.

The \emph{optimal} adjustment set $O$ is the adjustment set with minimal asymptotic variance among all valid adjustment sets. It was first defined for ordinary least squares estimation in linear causal graphical models \citep{henckelGraphicalCriteriaEfficient2022a} and later extended to non-parametric models \citep{rotnitzkyEfficientAdjustmentSets2019}. We follow \cite{guoVariableEliminationGraph2023} for an intuitive definition of $O$.

\begin{definition} [Optimal adjustment set] 
    \citep{henckelGraphicalCriteriaEfficient2022a, guoVariableEliminationGraph2023} Let $\mathcal{G}=(\mathcal{V},\mathcal{E})$ be a causal directed acyclic graph with $A, Y \in \mathcal{V}$. For estimating the treatment effect $\tau$, the optimal adjustment set $O$ consists of the parents of mediators $M(\mathcal{G})$ that are not themselves mediators or the treatment, where mediators are defined to also include the outcome:
\begin{align*}
    O(\mathcal{G})  &\equiv \rm{Pa}\left\{ M(\mathcal{G}), \mathcal{G}) \right\} \setminus \left\{ M(\mathcal{G}) \cup \{A\} \right\}. 
\end{align*}
\end{definition}
Our setting is similar to the setting in \cite{henckelGraphicalCriteriaEfficient2022a} but with the additional assumption of Gaussianity. This enables us to consider all possible adjustment sets instead of only valid ones. Assuming a linear Gaussian causal model and ordinary least squares estimation, the asymptotic variance provided by any adjustment set $K$ is $\mathrm{aVar}(\hat \tau_{K}) = \sigma_{yy.ak} / \sigma_{aa.k}$, where $\sigma_{yy.x}$ denotes the conditional covariance of the variable $Y$ with itself, given the set of variables $X$, i.e. $\sigma_{yy.x} = \mathrm{var}(Y) - \mathrm{cov}(Y,X) \mathrm{cov}(X,X)^{-1}\mathrm{cov}(X,Y)$ \citep[prop. 1]{henckelGraphicalCriteriaEfficient2022a}. 
\cite{henckelGraphicalCriteriaEfficient2022a} show in Theorem 1 that the optimal adjustment set $O$ is asymptotically optimal in the sense that it provides an asymptotic variance $\mathrm{aVar}(\hat \tau_{O})$ that is smaller than or equal to the asymptotic variance provided by any other valid adjustment set $Z$. In this paper, we instead describe how to find an adjustment set that is not only asymptotically optimal, but finite sample optimal in terms of the mean squared error.

\section{Finding the MSE-optimal adjustment set}
\label{section:finding-the-set}
\subsection{Mean squared error optimality}
We aim to find the adjustment set that gives the most accurate average treatment effect estimator in terms of mean squared error for a given causal model $\mathcal{M}$ and sample size $n$. We call this set the \emph{MSE-optimal adjustment set}.

\begin{definition}[MSE-optimal adjustment set] Let $\tau$ be the average treatment effect in a ground truth causal model $\mathcal{M}$ with random variables $\mathcal{V}$. We define the MSE-optimal adjustment set $O_n(\mathcal{M}, \hat \tau_{K})$ as an adjustment set $K$ that minimizes the mean squared error of a given causal effect estimator $\hat \tau_{K}$ using $n$ datapoints $v_1, \dots v_n$ of the variables $V \in \mathcal{V}$, sampled from the observational distribution corresponding to the model $\mathcal{M}$:
    \begin{equation}
        O_n(\mathcal{M}, \hat \tau_{K}) =\underset{K \subseteq \mathcal{V} \setminus \{A,Y\}}{\mathrm{argmin}}E_{v_1,  \dots, v_n \sim \mathcal{M}}\{ ( \hat \tau_K - \tau )^2\}
    \end{equation}
\end{definition}

We focus on the MSE-optimal adjustment set in the setting where $\mathcal{M}$ is linear Gaussian and $\hat \tau$ is the ordinary least squares estimator. For simplicity, we may omit $\mathcal{M}$ and $\hat\tau$ to ease notation and denote the MSE-optimal adjustment set as $O_n$. 

In many cases, including our own experiments (see Figure~\ref{fig:examples}), $O_n$ converges to $O$ as the sample size approaches infinity. However, in some cases, it is possible that $O_n$ differs from $O$ asymptotically. For example, consider a causal model $\mathcal{M}$, where the outcome is $Y = A + O_1 + O_2 + \epsilon_Y$ and the treatment is $A = 2O_1 - 2O_2 + \epsilon_A$ with $\epsilon_A, \epsilon_Y, O_1, O_2 \sim \mathcal{N}(0, 1)$. 
% var(A) = 2^2 + (-2)^2 + 1 = 9
% var(A | O_1, O_2) = 1
% var(Y) = var(A) + 3 = 12, O_1 and O_2 are independent because A is a collider
% cov(A,Y | O_1, O_2) = \beta_{ya.o} * var(A | O_1, O_2) = 1 * 1 = 1
% cov(A,Y) = E(AY) - E(A)E(Y) = E(AY) 
%          = E[(2O_1 - 2O_2 + \epsilon_A)(A+ O_1 + O_2 + \epsilon_Y)] 
%          = E[(2O_1 - 2O_2 + \epsilon_A)(3O_1 - O_2 + \epsilon_A \epsilon_Y)]
%          = E[6 O_1^2 - 2 O_1 O_2 + 2 O_1 \epsilon_A + 2 O_1 \epsilon_Y - 6 O_2 O_1 + 2 O_2^2 - 2 O_2 \epsilon_A - 2 O_2 \epsilon_Y + 3 O_1 \epsilon_A - O_2 \epsilon_A + \epsilon_A^2 + \epsilon_A \epsilon_Y]
% since \epsilon_A, \epsilon_Y, O_1 and O_2 are pairwise independent, any product terms between them have expected value zero, e.g. E[2 O_1 \epsilon_A] = 0, only quadratic terms stay
%          = 6E[O_1^2] + 2E[O_2^2] + [\epsilon_A^2] = 6 + 2 + 1 = 9
% var(Y|A) = var(Y) - cov(A,Y)^2 / var(A) = 12 - 9^2 / 9 = 3
% var(Y|A,O_1,O_2) = 1
% \text{aVar}(\hat \tau_{\emptyset}) = \sigma^2_{Y|A } / \sigma^2_{A}
Here, the simple model with an empty adjustment set $K = \emptyset$ has zero bias $B(\hat \tau_{\emptyset}) = \text{cov}(A, Y) / \text{var}(A) - 1 = 0$. 
It has an asymptotic variance of $\text{aVar}(\hat \tau_{\emptyset}) = \text{var}(Y \mid A) / \text{var}(A) = 1/3$, which is lower than the asymptotic variance of the optimal adjustment set $\text{aVar}(\hat \tau_{O_1 \cup O_2}) = \text{var}(Y \mid A , O_1 ,O_2) / \text{var}(A \mid O_1,O_2) = 1$. In this example, the MSE-optimal adjustment set $O_n$ is also asymptotically the empty set $\emptyset$ and does not converge to $O$.

\subsection{Sample size criterion}
As demonstrated in Figure \ref{fig:examples}, the adjustment set that yields the lowest mean squared error for predicting the average causal effect can depend on the sample size. We present a criterion to compare two adjustment sets for treatment effect estimation given a linear Gaussian model $\mathcal{M}$ and sample size $n$. Two adjustment sets can be compared based on their set sizes, and the estimator's asymptotic variances and biases, for a given sample size as follows.

\begin{theorem}[Sample Size Criterion]
\label{sample-size-criterion}
Let $K$ and $L$ be two adjustment sets for estimating the causal effect $\tau$ with the ordinary least squares estimator, denoted as $\hat \tau_{K}$ or $\hat \tau_{L}$ respectively. We assume $|K| < n - 3$ and $|L| < n - 3$. If the squared bias $B^2(\hat{\tau}_{K})$ is larger than the squared bias $B^2(\hat{\tau}_{L})$, then the following condition for the sample size, denoted by $n$, is necessary and sufficient to ensure a lower expected mean squared error of $\hat{\tau}_{K}$ compared to $\hat{\tau}_{L}$:
\begin{equation}
\label{eq:sample-size-criterion}
    n < \frac{\mathrm{aVar}(\hat \tau_{L}) - \left(\frac{n - |L| - 3}{n - |K| - 3}\right)\mathrm{aVar}(\hat \tau_{K})}{B^2(\hat{\tau}_{K}) - B^2(\hat{\tau}_{L})} + |L| + 3. 
\end{equation}
\end{theorem}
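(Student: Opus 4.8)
The plan is to reduce the comparison of mean squared errors to an exact algebraic identity, after first deriving a closed‑form expression for the finite‑sample MSE of an ordinary least squares estimator that uses a pre‑treatment adjustment set in a linear Gaussian model. Write $\hat\tau_K$ for the coefficient of $A$ in the ordinary least squares regression of $Y$ on $A$, the covariates $K$, and an intercept. I would start from the bias--variance decomposition $\mathrm{MSE}(\hat\tau_K)=\mathrm{Var}(\hat\tau_K)+B^2(\hat\tau_K)$. For this to be exact, with no cross term, I use that in a jointly Gaussian model $E(Y\mid A,K)$ is exactly linear in $(A,K)$, so, conditionally on the design, $\hat\tau_K$ is unbiased for the population projection coefficient $\beta_K$ of $A$; hence $\hat\tau_K$ is unconditionally unbiased for $\beta_K$ and $E(\hat\tau_K)-\tau=\beta_K-\tau=B(\hat\tau_K)$, which is exactly the bias already appearing in the asymptotic analysis.

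The core step is to compute $\mathrm{Var}(\hat\tau_K)$ exactly. Conditioning on the design matrix $X=[\mathbf 1,A,K]$, joint Gaussianity gives $\mathrm{Var}(\hat\tau_K\mid X)=\sigma_{yy.ak}\,[(X^\top X)^{-1}]_{AA}$, where the conditional variance $\sigma_{yy.ak}$ does not depend on $X$; since $E(\hat\tau_K\mid X)=\beta_K$ is constant, $\mathrm{Var}(\hat\tau_K)=\sigma_{yy.ak}\,E\{[(X^\top X)^{-1}]_{AA}\}$. By the partitioned‑inverse identity (equivalently Frisch--Waugh--Lovell), $[(X^\top X)^{-1}]_{AA}=1/\|A-\widehat A\|^2$, where $\widehat A$ is the fitted value from regressing the column $A$ on $K$ and an intercept. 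Because $A\mid K$ is Gaussian with constant variance $\sigma_{aa.k}>0$, this residual sum of squares is distributed as $\sigma_{aa.k}\,\chi^2_{\,n-|K|-1}$, and $E(1/\chi^2_m)=1/(m-2)$ for $m>2$ --- which applies precisely because $|K|<n-3$. This yields $\mathrm{Var}(\hat\tau_K)=\sigma_{yy.ak}/\{\sigma_{aa.k}(n-|K|-3)\}=\mathrm{aVar}(\hat\tau_K)/(n-|K|-3)$, and the same formula for $L$.

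With these expressions, $\mathrm{MSE}(\hat\tau_K)<\mathrm{MSE}(\hat\tau_L)$ is equivalent to
\[
B^2(\hat\tau_K)-B^2(\hat\tau_L)\;<\;\frac{\mathrm{aVar}(\hat\tau_L)}{n-|L|-3}-\frac{\mathrm{aVar}(\hat\tau_K)}{n-|K|-3}.
\]
It then remains to rearrange into the stated form: multiply through by $n-|L|-3>0$, divide by $B^2(\hat\tau_K)-B^2(\hat\tau_L)>0$ (using the hypothesis that $B^2(\hat\tau_K)$ exceeds $B^2(\hat\tau_L)$), and isolate $n-|L|-3$ to obtain~\eqref{eq:sample-size-criterion}. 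Since every multiplier in these manipulations is strictly positive, each step is an equivalence, so the displayed condition is both necessary and sufficient.

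I expect the main obstacle to lie in the exact variance computation rather than in the final algebra. One has to make sure that it is joint Gaussianity that makes the conditional mean exactly linear (so there is genuinely no finite‑sample bias and no cross term in the decomposition), that the conditional variances $\sigma_{yy.ak}$ and $\sigma_{aa.k}$ are constants, and that the residual degrees of freedom are $n-|K|-1$ because the auxiliary regression of $A$ on $K$ includes an intercept --- this last point is what links finiteness of $\mathrm{Var}(\hat\tau_K)$ to the assumption $|K|<n-3$. A minor point to dispatch along the way is that $\sigma_{aa.k}>0$, i.e.\ $K$ does not perfectly determine $A$, which is needed for $\hat\tau_K$ to be well defined.
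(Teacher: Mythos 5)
Your proposal is correct and follows essentially the same route as the paper: it derives the exact finite-sample variance $\mathrm{var}(\hat\tau_K)=\mathrm{aVar}(\hat\tau_K)/(n-|K|-3)$ from the inverse chi-squared moment of the residual sum of squares of $A$ on $K$ (the paper's Equations for $\mathrm{var}(\hat\tau_K)$ in Appendix~2), then applies the bias--variance decomposition and rearranges the resulting inequality, using $B^2(\hat\tau_K)-B^2(\hat\tau_L)>0$ and $n-|L|-3>0$, into the stated criterion. Your additional care about the degrees of freedom, the exact conditional unbiasedness for the population projection coefficient, and $\sigma_{aa.k}>0$ only makes explicit what the paper leaves implicit.
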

We present the proof in Appendix~2. Intuitively, the bias advantage of adjustment set $L$ is scaled by $n$ and then roughly compared to its disadvantage in asymptotic variance. Since good variance properties might outweigh a given bias, considering invalid adjustment sets for treatment effect estimation becomes important when $n$ is finite.

\subsection{Graphical criteria}

With Theorem~\ref{sample-size-criterion}, we have introduced a criterion to compare adjustment sets in terms of their mean squared error. However, it does not provide us with an efficient way to search for MSE-optimal adjustment set. A straightforward option is to search over the power set of all covariates, which is inefficient and scales poorly with the number of covariates. In the following, we will show that the search space for $O_n$ can be limited to a smaller space than the power set of all covariates. For linear Gaussian causal models, some variables or variable combinations can be excluded from the adjustment set, solely based on the graph $\mathcal{G}$.

For example, variables that are d-separated from $Y$ given any $K \subseteq \mathcal{V} \setminus \{A, Y\}$ in $\mathcal{G}'=\mathcal{G}\setminus (A \rightarrow Y)$ always increase mean squared error, as we show in Lemma~B10 in the Supplementary Material. This includes instrumental variables which only affect the outcome through the treatment. Specifically, adding an instrumental variable $I$ to an adjustment set $K$ might improve precision of the estimator yielded by $K \cup I$ compared to $K \setminus I$, if $K$ is an invalid adjustment set, as $I$ can reduce $\sigma_{yy.aki}$ compared to $\sigma_{yy.ak}$ via open confounding paths. However, this is always outweighed by the amount of bias amplification added by conditioning on $I$.

Additionally, certain precision variables \citep{brookhart2006variable} and confounding variables are never necessary to achieve an optimal mean squared error, as we will explain in the following. We use the following definition of precision variables:

\begin{definition}[Precision Variables]
\label{def:precision}
    Let $\mathcal{V}$ be the set of variables in a directed acyclic graph $\mathcal{G}=(\mathcal{V}, \mathcal{E})$ describing the causal relations of $\mathcal{V}$, with $A, Y \in \mathcal{V}$. Let $\mathcal{G}'$ be the graph obtained from $\mathcal{G}$ by removing the edge $A \rightarrow Y$. For estimating the causal effect $\tau$, a variable $V_i \in \mathcal{V} \setminus \{A, Y \}$ is a precision variable, if in  $\mathcal{G}'$ it is d-separated from $A$ given $K$ for all $K \subseteq \mathcal{V} \setminus \{A, Y \}$, and d-connected to $Y$ given $L$,  for some $L \subseteq \mathcal{V} \setminus \{A, Y \}$. We denote the set of precision variables in $\mathcal{G}$ by $\mathcal{P}$.
\end{definition}

Generally, precision variables can increase the precision of the treatment effect estimate \allowbreak\citep{brookhart2006variable}.  However, if the set of covariates is relatively large compared to the sample size, a precision variable that only contains little information about the outcome may increase the variance of the ordinary least squares estimator. In Appendix~2, we provide a reformulation of the sample size criterion that shows when adding a set of precision variables improves mean squared error. Based on the causal graph alone, we can exclude the following precision variables when searching for the MSE-optimal adjustment set.

\begin{definition}[Suboptimal precision variables]
\label{def:suboptimal-precision}
  Let $P \in \mathcal{P}$ be a precision variable in the set of variables $\mathcal{V}$ in a directed acyclic graph $\mathcal{G}=(\mathcal{V}, \mathcal{E})$.  If there exists another precision variable $P^* \in \mathcal{P}$, such that all paths from $P$ to $Y$ in $\mathcal{G}'=\mathcal{G}\setminus (A \rightarrow Y)$ are blocked given $P^*$ and any other set $Z \subseteq \mathcal{V} \setminus \{A, Y \}$, then $P$ is a suboptimal precision variable. We call $\mathcal{S}^{P}$ the set of all suboptimal precision variables in $\mathcal{G}$.
\end{definition}

\begin{wrapfigure}{r}{0.35\textwidth} 
    \centering
    \vspace{-10pt}
    \begin{tikzpicture}[
        font=\fontsize{8}{10}\selectfont,
        node distance=1.3cm,
        on grid,
        auto,
        block/.style={circle, draw, inner sep=0pt, outer sep=0pt, minimum size=0.5cm}
    ]
    
    \node [block] (A) {A};
    \node [block, right=of A] (Y) {$Y$};
    \node [block, above=of Y] (O1) {$O_1$};
    \node [block, above=of A] (I1) {$S_1$};
    \node [block, right=of Y] (P1) {$O_2$};
    \node [block, above=of P1] (I2) {$S_2$};
    \node [block, right=of P1] (I3) {$S_3$};
    \node [block, below=of P1] (P) {$P_1$};
    \node [block, left=of P] (O3) {$O_3$};
    \node [block, right=of P] (O4) {$O_4$};
    \node [block, left=of O3] (IV1) {$I_1$};
    
    \draw[-{Latex[length=2mm]}, blue] (A) -- (Y) node[midway, above, black] {$\tau$};
    \draw[-{Latex[length=2mm]}] (I1) -- (O1) node[midway, right, pos=0.1] {};
    \draw[-{Latex[length=2mm]}] (O1) -- (Y) node[midway, right] {};
    \draw[-{Latex[length=2mm]}] (I1) -- (A) node[midway, left] {};    
    \draw[-{Latex[length=2mm]}] (P1) -- (Y) node[midway, left] {};   
    \draw[-{Latex[length=2mm]}] (I2) -- (P1) node[midway, left] {};    
    \draw[-{Latex[length=2mm]}] (I3) -- (P1) node[midway, left] {};  
    \draw[-{Latex[length=2mm]}] (P) -- (O3) node[midway, left] {};  
    \draw[-{Latex[length=2mm]}] (O3) -- (Y) node[midway, left] {};  
    \draw[-{Latex[length=2mm]}] (P) -- (O4) node[midway, left] {}; 
    \draw[-{Latex[length=2mm]}] (O4) -- (Y) node[midway, left] {}; 
    \draw[-{Latex[length=2mm]}] (IV1) -- (A) node[midway, left] {}; 
    \end{tikzpicture}
    \caption{Example graph $\mathcal{G}_3$, $\mathcal{G}'_3$ is the same without the edge between $A$ and $Y$.}
    \label{fig:g3}
    \vspace{-10pt}
\end{wrapfigure}
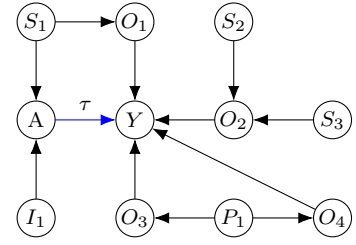

For an example, see $\mathcal{G}_3$ in Figure~\ref{fig:g3}, with the precision variables $\mathcal{P}=\{O_2, O_3, O_4, S_2, S_3, P_1 \}$. The variables $S_2$ and $S_3$ are suboptimal precision variables, as the precision variable $P^*=O_2$ blocks the paths $( S_2, O_2, Y )$ and $( S_3, O_2, Y )$. Since there is no other single precision variable that blocks all paths from $P_1$ to $Y$, $P_1$ is not a suboptimal precision variable. 

Certain variables that are related to both the treatment and outcome can also be excluded from the MSE-optimal adjustment set. Commonly, pre-treatment variables that are related to both the outcome and the treatment, are referred to as \emph{confounders}. In the context of directed acyclic graphs, confounders are usually defined as variables $V$ that are a \emph{common cause} of $A$ and $Y$ \citep{pearl2009causality}. Here, we propose an extended definition of confounders that also includes pre-treatment colliders and other pre-treatment variables that are d-connected to confounders, because we can then define a similar criterion for suboptimality in terms of the mean squared error as for the precision variables.

\begin{definition}[Extended confounding variables]
\label{def:confounding}
    Let $\mathcal{V}$ be the set of variables in a directed acyclic graph $\mathcal{G}=(\mathcal{V}, \mathcal{E})$ describing the causal relations of $\mathcal{V}$, with $A, Y \in \mathcal{V}$.  Let $\mathcal{G}'$ be the graph obtained from $\mathcal{G}$ by removing the edge $A \rightarrow Y$. For estimating the causal effect $\tau$, a variable $V_i \in \mathcal{V}  \setminus \{A, Y\}$ is in the set of extended confounding variables if it is d-connected to $A$ in $\mathcal{G}'$, given some $K \subseteq \mathcal{V} \setminus \{A, Y\}$, and d-connected to $Y$ in $\mathcal{G}'$, given some $L \subseteq \mathcal{V} \setminus \{A, Y\}$. We denote the set of extended confounding variables in $\mathcal{G}$ by $\mathcal{W}$.
\end{definition}
The following extended confounding variables can be considered suboptimal in terms of mean squared error.

\begin{definition}[Suboptimal confounding variables] 
\label{def:suboptimal-confounding}
   Let $W \in \mathcal{W}$ be an extended confounding variable in the set of variables $\mathcal{V}$ in a directed acyclic graph $\mathcal{G}=(\mathcal{V}, \mathcal{E})$. Let $\mathcal{G}'$ be the graph obtained from $\mathcal{G}$ by removing the edge $A \rightarrow Y$. If there exists another extended confounding variable $W^* \in \mathcal{W}$, such that $W \indep_{\mathcal{G}'} Y \mid W^* \cup Z$ and $W^* \indep_{\mathcal{G}'} A \mid W \cup Z$ for any set $Z \subseteq \mathcal{V} \setminus \{A, Y\}$, then $W$ is a suboptimal confounding variable. We call $\mathcal{S}^{W}$ the set of all suboptimal confounding variables in $\mathcal{G}$.
\end{definition}

Again, see $\mathcal{G}_3$ in Figure~\ref{fig:g3} for an example, with the extended confounding variables $\mathcal{W}=\{S_1, O_1 \}$. The variable $S_1$ is a suboptimal confounding variable, since all paths between $S_1$ and $Y$ are blocked in $\mathcal{G}'_3$ given $O_1$ and any other set, i.e. the path $(S_1, O_1, Y)$, and since the only path between $O_1$ and $A$, i.e., $(O_1, S_1, A)$, is blocked by $S_1$. For another more complex example, consider the graph of $\mathcal{M}_1$ in Figure~\ref{fig:examples}. Here, all paths between $W_1$ and $Y$ in $\mathcal{G}'_2$ are blocked given $O_1$ and any other set. However, not all paths between $O_1$ and $A$ in $\mathcal{G}'_2$ are blocked given $W_1$ and any other set. Specifically, 
the path $(O_1, W_2, A)$ is open given $W_1$. Therefore, the extended confounding variable $W_1$ is not a suboptimal confounding variable.

\begin{theorem}[MSE-optimal adjustment set candidates] \label{theorem:O_n} Let $\mathcal{O}_n(\mathcal{M}, \hat \tau_{K})$ be the set of all MSE-optimal adjustment sets for the causal linear Gaussian model $\mathcal{M}$ and sample size $n$. Let $\mathcal{V}$ be the variables in $\mathcal{G}$. There exists at least one MSE-optimal adjustment set $O^*_n \in \mathcal{O}_n(\mathcal{M}, \hat \tau)$, such that every variable $V_j \in \mathcal{V} \setminus \{A, Y\}$ in $O^*_n$ is either (i) a confounding variable $W \in \mathcal{W}$ that is not suboptimal or (ii) a precision variable $P \in \mathcal{P}$ that is not suboptimal:
\begin{equation}
    \label{eq:O_n}
    O^*_n \cap \mathcal{V}\setminus \{A, Y\} \subseteq  \mathcal{W} \setminus \mathcal{S}^W \cup \mathcal{P}  \setminus \mathcal{S}^P 
\end{equation}
\end{theorem}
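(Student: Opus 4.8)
The plan is to begin with an arbitrary MSE-optimal adjustment set $O_n\in\mathcal{O}_n(\mathcal{M},\hat\tau)$ and, one variable at a time, either delete a variable or swap it for a candidate variable appearing on the right-hand side of~\eqref{eq:O_n}, checking at each step that the mean squared error does not increase; since each step will strictly decrease the number of variables in the set that are \emph{not} candidates, the process terminates in an MSE-optimal set of the asserted form. Throughout I would use the decomposition $\mathrm{MSE}(\hat\tau_K)=B^2(\hat\tau_K)+\mathrm{aVar}(\hat\tau_K)/(n-|K|-3)$ underlying Theorem~\ref{sample-size-criterion}, which is weakly increasing in $|K|$, in $B^2(\hat\tau_K)$, and in $\mathrm{aVar}(\hat\tau_K)$ separately, so it is enough at each step to control these three quantities. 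First I would record that $\mathcal{V}\setminus\{A,Y\}$ is the disjoint union of $\mathcal{W}$, $\mathcal{P}$, and the set $\mathcal{N}$ of variables d-separated from $Y$ in $\mathcal{G}'$ given every subset of $\mathcal{V}\setminus\{A,Y\}$: any variable d-connected to $Y$ in $\mathcal{G}'$ for some conditioning set lies in $\mathcal{P}$ or $\mathcal{W}$ according to whether it is d-separated from $A$ for every conditioning set or not, and $\mathcal{P}\cap\mathcal{W}=\emptyset$. By Lemma~B10, adjoining a variable of $\mathcal{N}$ to any adjustment set never lowers the mean squared error, so we may assume $O_n\cap\mathcal{N}=\emptyset$, i.e.\ $O_n\subseteq\mathcal{P}\cup\mathcal{W}$.

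Next I would remove suboptimal precision variables. Let $P\in O_n\cap\mathcal{S}^P$, write $S=O_n\setminus\{P\}$, and let $P^*$ be a dominator of $P$ in the sense of Definition~\ref{def:suboptimal-precision} that is maximal with respect to the (transitive) domination relation; one verifies that such a maximal dominator itself lies in $\mathcal{P}\setminus\mathcal{S}^P$. Set $O_n'=(O_n\setminus\{P\})\cup\{P^*\}$, so $|O_n'|\le|O_n|$. Since a precision variable is d-separated from $A$ given every set, inserting, deleting, or swapping precision variables changes neither $\mathrm{var}(A\mid K)$ nor the population residual of $A$, hence neither the denominator $\sigma_{aa.k}$ nor the probability limit of $\hat\tau$; in particular $B(\hat\tau_{O_n'})=B(\hat\tau_{O_n})$. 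For the numerator it suffices to show $\sigma_{yy.aSP^*}\le\sigma_{yy.aSP}$. Here $P$ is d-separated from $A$ (precision property) and from $Y$ (taking $Z=S$ in Definition~\ref{def:suboptimal-precision}) given $S\cup\{P^*\}$ in $\mathcal{G}'$, so by the composition and weak-union properties of d-separation $P\indep_{\mathcal{G}'}Y\mid\{A\}\cup S\cup\{P^*\}$; this lifts to $P\indep Y\mid\{A\}\cup S\cup\{P^*\}$ in the distribution because, with $A$ in the conditioning set, the edge $A\rightarrow Y$ can only place $A$ on a $P$--$Y$ path as a conditioned non-collider, by the pre-treatment assumption. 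Hence $\sigma_{yy.aSP^*}=\sigma_{yy.aSP^*P}\le\sigma_{yy.aSP}$, so $O_n'$ is again MSE-optimal and contains one fewer suboptimal variable.

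The corresponding step for suboptimal confounding variables is where I expect the real difficulty. Take $W\in O_n\cap\mathcal{S}^W$ with a maximal dominator $W^*$ as in Definition~\ref{def:suboptimal-confounding} (again $W^*\in\mathcal{W}\setminus\mathcal{S}^W$), $S=O_n\setminus\{W\}$, $O_n'=(O_n\setminus\{W\})\cup\{W^*\}$; I would route through the intermediate set $S\cup\{W,W^*\}$. The $A$-side condition $W^*\indep_{\mathcal{G}'}A\mid S\cup\{W\}$ lifts cleanly to $W^*\indep A\mid S\cup\{W\}$ in the distribution — here $A$ is an endpoint, and the pre-treatment assumption (which also forces $Y$ to be a sink among the covariates) forbids a new $W^*$--$A$ path through the edge $A\rightarrow Y$ — so $\sigma_{aa.SWW^*}=\sigma_{aa.SW}$ and $B(\hat\tau_{SWW^*})=B(\hat\tau_{SW})$, whence $\mathrm{aVar}(\hat\tau_{SWW^*})\le\mathrm{aVar}(\hat\tau_{SW})$. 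It then remains to show that deleting $W$ from $S\cup\{W,W^*\}$ increases neither $\mathrm{aVar}$ nor $|B|$ — equivalently, that adjoining $W$ to $S\cup\{W^*\}$ can only inflate the variance and the bias of $\hat\tau$. Intuitively the $Y$-side condition ($W\indep_{\mathcal{G}'}Y\mid W^*\cup Z$ for every $Z$) says that, relative to any set containing $W^*$, the variable $W$ carries information only about the treatment, so adjoining it behaves like adjoining a pure treatment predictor, which is known to increase OLS variance and to amplify or unmask omitted-variable bias. The obstacle is that the clean argument ``$W^*$ already absorbs all of $W$'s information about $Y$'' fails: conditioning on $A$ can unmask colliders at or below $A$, so $W\indep_{\mathcal{G}'}Y\mid S\cup\{W^*\}$ does not yield $W\indep Y\mid\{A\}\cup S\cup\{W^*\}$, and the composition trick available for precision variables is unavailable here because a confounder need not be d-separated from $A$ given $S\cup\{W^*\}$. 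I would close this gap with a dedicated lemma, combining the two domination conditions with the pre-treatment assumption to characterise $W$ (relative to any superset of $\{W^*\}$) as a treatment-side variable in a sense precise enough to make the variance- and bias-monotonicity statements go through; this lemma is, I expect, the technical heart of the argument.

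Finally, iterating the two replacement steps preserves MSE-optimality and membership in $\mathcal{P}\cup\mathcal{W}$ while strictly decreasing the number of suboptimal variables in the adjustment set, so after finitely many steps one reaches an MSE-optimal $O^*_n$ with $O^*_n\cap(\mathcal{V}\setminus\{A,Y\})\subseteq(\mathcal{W}\setminus\mathcal{S}^W)\cup(\mathcal{P}\setminus\mathcal{S}^P)$, which is exactly~\eqref{eq:O_n}.
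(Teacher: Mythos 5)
Your overall architecture --- partition $\mathcal{V}\setminus\{A,Y\}$ into $\mathcal{P}$, $\mathcal{W}$ and the irrelevant variables, discard the latter via Lemma~B10, then iteratively swap each suboptimal variable for a dominating non-suboptimal one --- is exactly the paper's route, and your precision-variable step is sound: the paper proves bias invariance as a separate lemma and invokes Lemma~C.2 of Henckel et al.\ where you use conditional-variance monotonicity together with the equal-coefficients lemma, which is an equivalent argument, and your explicit maximal-dominator observation is, if anything, more careful than the paper on the point that the replacement variable is itself non-suboptimal. The genuine gap is the suboptimal confounding step, which you explicitly defer to ``a dedicated lemma'' you do not supply. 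This is not a routine detail: it is the bulk of the paper's proof (the d-separation properties of suboptimal confounding variables in Lemma~B5, whose part (b) alone needs a delicate path case analysis, plus the MSE comparison in Lemma~B9), so the proposal is incomplete precisely at the theorem's technical core.

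Moreover, the way you propose to close it --- show that deleting $W$ from $S\cup\{W,W^*\}$ increases neither $\mathrm{aVar}$ nor $|B|$ separately --- is unlikely to succeed as stated. The paper instead completes $K\cup S$ to a $K\cup S$-irreducible valid set by adding extended confounders $U$, writes $B(\hat\tau_{K\cup S})=\beta_{yu.aks}\,\sigma_{ua.ks}/\sigma_{aa.ks}$, and shows via Lemma~B5 that $\beta_{yu}$, $\sigma_{ua}$ and $\Sigma_{uu}$ are unchanged by the swap while $\sigma_{aa.ks}\le\sigma_{aa.k\setminus s w^*}$ and $\sigma_{yy.aksu}\ge\sigma_{yy.ak\setminus s w^*u}$. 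These give the bias ordering, but they do not give the asymptotic-variance ordering term by term: $\sigma_{yy.aks}=\sigma_{yy.aksu}+\beta_{yu.aks}\Sigma_{uu.aks}\beta_{yu.aks}^T$, and by the Schur complement $\Sigma_{uu.aks}\le\Sigma_{uu.ak\setminus s w^*}$, so the two pieces of the numerator move in opposite directions and the separate monotonicity you are banking on can fail. The inequality only emerges once squared bias and finite-sample variance are merged, which is exactly the $\Psi$-matrix computation in Lemma~B9 (and note the paper compares $K\cup S$ with $K\setminus S\cup W^*$ directly, never needing your intermediate set, whose larger cardinality would also complicate the finite-sample variance bookkeeping). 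Without that combined argument and the supporting d-separations involving $U$, the proof of the theorem is not established.
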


Theorem~\ref{theorem:O_n} helps to reduce the search space by excluding certain single variables. We report d-separation properties of precision variables, extended confounding variables and the remaining variables in Lemma~B4, Lemma~B5 and Lemma~B6 in the Supplementary Material, which were used for the proof of Theorem~\ref{theorem:O_n} and may be of independent interest. The search space for the MSE-optimal adjustment set can be reduced even further by excluding certain variable combinations, which we call \emph{forbidden combinations}.

\begin{theorem}[Forbidden combinations]
\label{theorem:combinations}  
 Let $L$ be a set of pre-treatment variables, including the covariate $L_i$. Let $L_{-i}$ be the covariates in $L$ without $L_i$, i.e. $L_{-i}=L \setminus L_i$. If $L_i$ is d-separated from $Y$ given $L_{-i}$ and any other set $K \subseteq \mathcal{V} \setminus \{A, Y\}$ in $\mathcal{G}'=\mathcal{G} \setminus (A \rightarrow Y)$, then any adjustment set $X$ with $L \subseteq X$ can not be MSE-optimal for any $n$.
\end{theorem}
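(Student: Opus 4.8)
The plan is to prove that any adjustment set $X$ with $L\subseteq X$ is \emph{strictly} dominated in mean squared error, for every sample size, by the smaller set $X':=X\setminus\{L_i\}$; hence $X$ can never belong to the argmin defining $O_n(\mathcal{M},\hat\tau_K)$. The whole argument is carried out in the ``direct-effect-removed'' model $\mathcal{M}'$: the linear Gaussian SEM obtained from $\mathcal{M}$ by deleting the edge $A\to Y$, whose graph is $\mathcal{G}'$. Since Assumption~1 forces $Y$ to be childless and $A$ to have $Y$ as its only child, deleting $A\to Y$ alters only the structural equation of $Y$, so under a common noise coupling $A$ and every covariate are identical in $\mathcal{M}$ and $\mathcal{M}'$ while $Y_{\mathcal{M}'}=Y_{\mathcal{M}}-\tau A$. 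In particular $\mathcal{M}'$ is Markov to $\mathcal{G}'$, and any conditional (co)variance involving only $A$ and covariates agrees in $\mathcal{M}$ and $\mathcal{M}'$.

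First I would extract the usable consequence of the hypothesis. Applying the assumed d-separation with the choice $K=X\setminus L\subseteq\mathcal{V}\setminus\{A,Y\}$ gives, because $L\subseteq X$, that $L_{-i}\cup K=X'$, hence $L_i\indep_{\mathcal{G}'}Y\mid X'$. By the Markov property of $\mathcal{M}'$ this is a genuine conditional independence in $\mathcal{M}'$, i.e.\ $\sigma^{\mathcal{M}'}_{yl_i.x'}=0$ in Gaussian terms, and therefore, by the partial-(co)variance recursions, $\sigma^{\mathcal{M}'}_{yy.x}=\sigma^{\mathcal{M}'}_{yy.x'l_i}=\sigma^{\mathcal{M}'}_{yy.x'}$ and $\sigma^{\mathcal{M}'}_{ay.x}=\sigma^{\mathcal{M}'}_{ay.x'l_i}=\sigma^{\mathcal{M}'}_{ay.x'}$. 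Next I would rewrite bias and asymptotic variance in $\mathcal{M}'$ quantities: from $Y_{\mathcal{M}'}=Y-\tau A$ one gets $\sigma_{yy.ak}=\sigma^{\mathcal{M}'}_{yy.ak}$, $\sigma_{ay.k}-\tau\sigma_{aa.k}=\sigma^{\mathcal{M}'}_{ay.k}$ and $\sigma_{aa.k}=\sigma^{\mathcal{M}'}_{aa.k}$, so with $\mathrm{aVar}(\hat\tau_K)=\sigma_{yy.ak}/\sigma_{aa.k}$, $B(\hat\tau_K)=\sigma_{ay.k}/\sigma_{aa.k}-\tau$ and the identity $\sigma^{\mathcal{M}'}_{yy.k}=\sigma^{\mathcal{M}'}_{yy.ak}+(\sigma^{\mathcal{M}'}_{ay.k})^2/\sigma^{\mathcal{M}'}_{aa.k}$,
\[
\mathrm{aVar}(\hat\tau_K)+B^2(\hat\tau_K)=\frac{\sigma^{\mathcal{M}'}_{yy.k}}{\sigma^{\mathcal{M}'}_{aa.k}},\qquad B^2(\hat\tau_K)=\Big(\frac{\sigma^{\mathcal{M}'}_{ay.k}}{\sigma^{\mathcal{M}'}_{aa.k}}\Big)^2 .
\]

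The comparison is then immediate. Conditioning on the extra covariate $L_i$ can only shrink a conditional variance, so $\sigma^{\mathcal{M}'}_{aa.x}\le\sigma^{\mathcal{M}'}_{aa.x'}$; combining this with $\sigma^{\mathcal{M}'}_{yy.x}=\sigma^{\mathcal{M}'}_{yy.x'}$ and $\sigma^{\mathcal{M}'}_{ay.x}=\sigma^{\mathcal{M}'}_{ay.x'}$ in the two displayed identities yields $\mathrm{aVar}(\hat\tau_X)+B^2(\hat\tau_X)\ge\mathrm{aVar}(\hat\tau_{X'})+B^2(\hat\tau_{X'})$ and $B^2(\hat\tau_X)\ge B^2(\hat\tau_{X'})$. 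Using $\mathrm{MSE}(\hat\tau_K)=\mathrm{aVar}(\hat\tau_K)/(n-|K|-3)+B^2(\hat\tau_K)$ (the decomposition behind Theorem~\ref{sample-size-criterion}) and writing $m:=n-|X|-3$ so that $n-|X'|-3=m+1$, a short rearrangement gives
\[
\mathrm{MSE}(\hat\tau_X)-\mathrm{MSE}(\hat\tau_{X'})=\frac{\mathrm{aVar}(\hat\tau_X)+m\big[\Delta_{aV}+(m+1)\Delta_{B}\big]}{m(m+1)},\qquad \Delta_{aV}:=\mathrm{aVar}(\hat\tau_X)-\mathrm{aVar}(\hat\tau_{X'}),\ \ \Delta_{B}:=B^2(\hat\tau_X)-B^2(\hat\tau_{X'}),
\]
and $\Delta_{aV}+(m+1)\Delta_{B}=(\Delta_{aV}+\Delta_{B})+m\Delta_{B}\ge 0$ since both summands are nonnegative. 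Hence $\mathrm{MSE}(\hat\tau_X)-\mathrm{MSE}(\hat\tau_{X'})\ge\mathrm{aVar}(\hat\tau_X)/\big(m(m+1)\big)>0$ for every $n$ with $|X|<n-3$ (and for smaller $n$, $\mathrm{MSE}(\hat\tau_X)=\infty$, so $X$ is trivially not optimal there either); thus $X$ is never MSE-optimal. This generalizes the single-variable statement (Lemma~B10 in the Supplementary Material, the case $L=\{L_i\}$).

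The main obstacle is the first reduction: correctly turning ``d-separated from $Y$ given $L_{-i}$ and any other set'' into the single clean independence $L_i\indep_{\mathcal{G}'}Y\mid X'$ in the right model, and arranging the $\mathcal{M}'$ bookkeeping so that $\mathrm{aVar}+B^2$ collapses to the ratio $\sigma^{\mathcal{M}'}_{yy.k}/\sigma^{\mathcal{M}'}_{aa.k}$ — it is precisely this collapse that makes the variance term cooperate, because removing $L_i$ need not decrease $\mathrm{aVar}(\hat\tau_K)$ on its own. Everything after that is elementary algebra, modulo double-checking that the variance term of $\mathrm{MSE}(\hat\tau_K)$ used in the proof of Theorem~\ref{sample-size-criterion} is indeed $\mathrm{aVar}(\hat\tau_K)/(n-|K|-3)$.
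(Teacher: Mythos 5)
Your proof is correct, but it takes a genuinely different route from the paper's. The paper proves this theorem by reduction to its irrelevant-variable exclusion lemma (Lemma~B10): it notes that the hypothesis makes $L_i$ behave like an irrelevant variable once $L_{-i}$ is in the conditioning set, and then reruns that lemma's argument, which augments $X$ by a set $U$ making it a $K$-irreducible valid adjustment set, expresses the bias as $\beta_{yu.ax}\sigma_{ua.x}/\sigma_{aa.x}$, and compares the two MSEs term by term using the d-separation properties of Lemma~B6, Henckel et al.'s Lemma~C.5, Pe\~na's Lemma~16 and a Schur-complement manipulation of the matrix $\Psi$. You bypass all of that machinery by working in the edge-deleted model $\mathcal{M}'$ (legitimate here because the pre-treatment assumption makes $Y$ childless, so $Y_{\mathcal{M}'}=Y-\tau A$ under a noise coupling and the law of $(A,\mathcal{V}\setminus\{A,Y\})$ is unchanged), writing $B(\hat\tau_K)=\sigma^{\mathcal{M}'}_{ay.k}/\sigma^{\mathcal{M}'}_{aa.k}$ and exploiting the collapse $\mathrm{aVar}(\hat\tau_K)+B^2(\hat\tau_K)=\sigma^{\mathcal{M}'}_{yy.k}/\sigma^{\mathcal{M}'}_{aa.k}$; a single instance of the hypothesis (take $K=X\setminus L$, giving $L_i\indep_{\mathcal{G}'}Y\mid X'$) then freezes both numerators while the denominator $\sigma^{\mathcal{M}'}_{aa.\cdot}$ can only shrink, and the algebra in $m=n-|X|-3$ delivers strict domination of $X'$ over $X$. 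What the paper's route buys is reuse: the irreducible-set decomposition, the bias formula through $U$, and the d-separation lemmas are shared infrastructure also needed for Theorem~2, so Theorem~3 follows almost for free once Lemma~B10 is in place. What your route buys is a shorter, self-contained argument with far weaker dependencies (Markov property, Gaussian partial-covariance recursions, monotonicity of partial variances), and it makes the bias-amplification mechanism transparent, since $B^2$ can only grow when $\sigma_{aa}$ shrinks with the numerator fixed. Two points to state explicitly if you write this up: the identity $B(\hat\tau_K)=\sigma_{ay.k}/\sigma_{aa.k}-\tau$ relies on exact unbiasedness of OLS for the population coefficient $\beta_{ya.k}$ under Gaussianity (the paper uses the same fact implicitly), and the clause ``for any $n$'' carries the same caveat as Theorem~1, namely $|X|<n-3$ for the finite-variance formula, which you already flag and handle sensibly.
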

Finally, we can exclude certain valid adjustment sets from the search space.

\begin{theorem}[Suboptimal valid adjustment sets]
    \label{theorem:valid-sets}
    Let $\mathcal{O}_n(\mathcal{M}, \hat \tau_{K})$ be the set of all MSE-optimal adjustment sets for the causal linear Gaussian model $\mathcal{M}$ and sample size $n$. Let $L$ be a valid adjustment set for estimating $\tau$. If $L$ is not the optimal adjustment set $O$, i.e. $L \neq O$, and it has larger or equal size compared to $O$, i.e. $|L| \geq O$, then the mean squared error yielded by $L$ is larger than or equal to the mean squared error yielded by $O$, such that if $L \in \mathcal{O}_n(\mathcal{M}, \hat \tau_{K})$, then $O \in \mathcal{O}_n(\mathcal{M}, \hat \tau_{K})$, and we can exclude $L$ from the search space, as we already consider $O$.
\end{theorem}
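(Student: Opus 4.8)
The plan is to reduce the comparison between $L$ and $O$ to a comparison of finite-sample variances, since both are \emph{valid} adjustment sets and therefore yield unbiased OLS estimators of $\tau$, and then to deploy two imported facts: the asymptotic optimality of $O$ among valid sets, and the monotonicity in $|K|$ of the finite-sample variance-inflation factor. First I would note that $O$ is valid (by definition it minimizes the asymptotic variance over valid sets), so $B(\hat\tau_L) = B(\hat\tau_O) = 0$. From the proof of Theorem~\ref{sample-size-criterion} in Appendix~2 --- equivalently, by rearranging \eqref{eq:sample-size-criterion} --- the expected mean squared error of the OLS estimator for any adjustment set $K$ with $|K| < n - 3$ decomposes as
\begin{equation*}
    E_{v_1, \dots, v_n \sim \mathcal{M}}\{(\hat\tau_K - \tau)^2\} = B^2(\hat\tau_K) + \frac{\mathrm{aVar}(\hat\tau_K)}{\,n - |K| - 3\,}.
\end{equation*}
Applied to the valid sets $L$ and $O$, this reduces to $\mathrm{aVar}(\hat\tau_L)/(n - |L| - 3)$ and $\mathrm{aVar}(\hat\tau_O)/(n - |O| - 3)$ respectively.

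The second step is to chain two inequalities. Because $L$ is valid, Theorem~1 of \cite{henckelGraphicalCriteriaEfficient2022a} gives $\mathrm{aVar}(\hat\tau_O) \le \mathrm{aVar}(\hat\tau_L)$. Because $|L| \ge |O|$ and $|L| < n - 3$ (so both denominators are positive), $n - |L| - 3 \le n - |O| - 3$, hence $1/(n - |O| - 3) \le 1/(n - |L| - 3)$. Therefore
\begin{equation*}
    E\{(\hat\tau_O - \tau)^2\} = \frac{\mathrm{aVar}(\hat\tau_O)}{n - |O| - 3} \;\le\; \frac{\mathrm{aVar}(\hat\tau_L)}{n - |O| - 3} \;\le\; \frac{\mathrm{aVar}(\hat\tau_L)}{n - |L| - 3} = E\{(\hat\tau_L - \tau)^2\},
\end{equation*}
where the first inequality shrinks the numerator and the second shrinks the denominator. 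This is the claimed $\mathrm{MSE}(\hat\tau_O) \le \mathrm{MSE}(\hat\tau_L)$. Consequently, if $L \in \mathcal{O}_n(\mathcal{M}, \hat\tau_K)$, i.e.\ $L$ attains the minimum expected mean squared error over all candidate adjustment sets, then $O$ attains that minimum too, so $O \in \mathcal{O}_n(\mathcal{M}, \hat\tau_K)$ and $L$ can be removed from the search space.

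Finally I would dispatch the degenerate cases in a sentence: $O$ exists and is a valid adjustment set under the standing pre-treatment assumption, so the comparison is well posed; and if $|L| \ge n - 3$ then $\hat\tau_L$ has infinite (or undefined) variance, so $\mathrm{MSE}(\hat\tau_L) = \infty \ge \mathrm{MSE}(\hat\tau_O)$ and $L$ cannot be MSE-optimal for such $n$ anyway (the empty set already has finite mean squared error for $n \ge 4$), so the conclusion still holds vacuously. I do not anticipate a genuine obstacle: the argument rests entirely on (i) importing the finite-sample MSE decomposition with the $n - |K| - 3$ factor from Theorem~\ref{sample-size-criterion} and (ii) importing the asymptotic-variance optimality of $O$ from \cite{henckelGraphicalCriteriaEfficient2022a}. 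The only point needing care is the two-sided movement in the inequality chain --- numerator down via Henckel et al., denominator down via the size assumption --- but both nudge the fraction in the same direction because $\mathrm{aVar} \ge 0$ and the denominators are positive.
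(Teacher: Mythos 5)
Your proposal is correct and follows essentially the same route as the paper's own proof: both use the zero bias of valid adjustment sets, the finite-sample decomposition $\mathrm{MSE}(\hat\tau_K) = B^2(\hat\tau_K) + \mathrm{aVar}(\hat\tau_K)/(n-|K|-3)$ from Theorem~\ref{sample-size-criterion}, and then chain the asymptotic-variance optimality of $O$ from \cite{henckelGraphicalCriteriaEfficient2022a} with the denominator monotonicity implied by $|L| \geq |O|$. The only differences are cosmetic (the paper cites Henckel et al.'s Theorem~3 rather than Theorem~1, and does not spell out the degenerate case $|L| \geq n-3$, which your remark handles harmlessly).
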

With our graphical criteria, we can reduce the number of potential adjustment sets in the search space for $\mathcal{G}_3$ in Figure~\ref{fig:g3} from 512 to only 28 adjustment sets. In $\mathcal{M}_1$ from Figure~\ref{fig:examples}, we can reduce the number of adjustment sets from 16 to 9, and in $\mathcal{M}_2$ from Figure~\ref{fig:examples} from 16 to 5. For a detailed explanation, see Appendix~3. 

We use Algorithm~1 in Appendix~4 to find the MSE-optimal adjustment set $O_n$ and estimate the average treatment effect. The algorithm first decreases the size of the search space for adjustment sets with Theorem~\ref{theorem:O_n}, Theorem~\ref{theorem:combinations} and Theorem~\ref{theorem:valid-sets}, and then chooses the adjustment set with the smallest estimated mean squared error from the remaining sets. Our code is available on \href{https://github.com/nadjarutsch/InvalidAdjustmentSets}{github}.

\section{Experiments}
We estimate the mean squared error of the ordinary least squares treatment effect estimator $\hat \tau_{K}$ for each potential adjustment set $K$ to find the estimated MSE-optimal adjustment set $\hat O_n$ in the examples from Figure~\ref{fig:examples}. Then, we compare $\hat O_n$, the ground truth MSE-optimal adjustment set $O_n$ and the asymptotically optimal adjustment set $O$ in terms of the provided mean squared error. Table~\ref{tab:main-results} shows that, in the causal model $\mathcal{M}_1$, $\hat O_n$ outperforms $O$ in small sample sizes, and performs competitively in larger sample sizes. We get qualitatively similar results for the causal model $\mathcal{M}_2$ from Figure~\ref{fig:examples} (right), which are shown in Appendix~4.

\begin{table}[htbp]
\centering
\footnotesize              
\setlength{\tabcolsep}{4pt}

\caption{Comparison of $O$ and $\hat O_n(\mathcal{M}_1, \hat \tau_{K})$ for $\mathcal{M}_1$
         from Figure~\ref{fig:examples} (left), 10\,000 random seeds.}
\label{tab:main-results}

\begin{tabular}{@{}lcccc@{}}
\toprule
Sample size &
\multicolumn{1}{c}{MSE for $O$ (Mean $\pm$ SD)} &
\multicolumn{1}{c}{MSE for $\hat O_n$ (Mean $\pm$ SD)} &
\multicolumn{1}{c}{MSE for $O_n$ (Mean $\pm$ SD)} &
$O_n$ \\
\midrule
10   & 0.1234 (0.2379) & \textbf{0.0926 (0.2343)} & 0.0003 (0.0003) & $O_1$ \\
20   & 0.0403 (0.0622) & \textbf{0.0306 (0.0673)} & 0.0002 (0.0002) & $O_1$ \\
30   & 0.0247 (0.0373) & \textbf{0.0182 (0.0374)} & 0.0002 (0.0001) & $O_1$ \\
40   & 0.0172 (0.0252) & \textbf{0.0135 (0.0270)} & 0.0002 (0.0001) & $O_1$ \\
50   & 0.0136 (0.0199) & \textbf{0.0103 (0.0201)} & 0.0002 (0.0001) & $O_1$ \\
100  & 0.0064 (0.0091) & \textbf{0.0048 (0.0095)} & 0.0002 (0.0001) & $O_1$ \\
500  & 0.0012 (0.0017) & \textbf{0.0010 (0.0018)} & 0.0002 (0.0000) & $O_1$ \\
1000 & 0.0006 (0.0009) & \textbf{0.0005 (0.0009)} & 0.0001 (0.0002) & $W_2$ \\
\bottomrule
\end{tabular}
\end{table}

\section{Discussion}

When the sample size is small compared to the number of variables, our current bias estimation method may be impractical. Especially when the smallest valid adjustment set $Z$ is larger than the sample size, i.e. $|Z| > n$, the ordinary least squares estimator is no longer feasible. For this setting, we propose an extension to our approach, where we select the adjustment set with the smallest estimated variance. We show the results of this extension in Appendix~4. Using the adjustment set with the smallest variance yields an even larger advantage over the optimal adjustment set $O$, but it comes with the disadvantage of bias-dominated estimates in larger sample size settings.

\section*{Acknowledgement}
N. Rutsch and S. L. van der Pas have received funding from the European Research Council (ERC) under the European Union’s Horizon Europe program under Grant agreement No. 101074082. Views and opinions expressed are however those of the author(s) only and do not necessarily reflect those of the European Union or the European Research Council Executive Agency. Neither the European Union nor the granting authority can be held responsible for them.

\section*{Supplementary Material}
The Supplementary Material includes definitions, proofs, an algorithm and a table with results from additional experiments.

% --------------------------------------------------
% Bibliography
% --------------------------------------------------
\bibliographystyle{plainnat}
\bibliography{preprint}

% --------------------------------------------------
% Supplementary Material
% --------------------------------------------------
\appendix
\label{SM}

\section*{Appendix 1}
\subsection*{Definitions}
\label{A:d-separation}

We first introduce the notion of collider on a path.
Let \( \mathcal{G} = (\mathcal{V}, \mathcal{E}) \) be a directed acyclic graph. A path $\pi$ between a variable $X \in \mathcal{V}$ and another variable $Y \in \mathcal{V}$ is a sequence of distinct nodes (X, \dots, Y) such that any two consecutive nodes in the sequence are adjacent in $\mathcal{G}$. A \emph{collider} on a path $\pi$ between $X$ and $Y$ is a node $W \in \mathcal{V} \setminus \{X, Y\}$ for which there are two incoming edges $\to W \gets$ on the path $\pi$. A \emph{non-collider}  on a path $\pi$ is instead defined as a node $W \in \mathcal{V} \setminus \{X, Y\}$ for which there are not two incoming edges on  the path $\pi$.
Given these definitions, we can now report the d-separation definition:

\begin{definition}[d-separation]\citep{pearl2009causality}
Let \( \mathcal{G} = (\mathcal{V}, \mathcal{E}) \) be a directed acyclic graph. Consider three disjoint subsets of nodes \( X, Y, \) and \( Z \) within \( \mathcal{V} \). The sets \( X \) and \( Y \) are d-separated given $Z$ in \( \mathcal{G} \) if and only if every path between any node in \( X \) and any node in \( Y \) is blocked by \( Z \). A path is  blocked if it contains a node $ W \in \mathcal{V} \setminus \{X, Y\}$ satisfying one of the following conditions:
\begin{enumerate}
    \item \( W \) is a non-collider on the path, and \( W \in Z \).
    \item \( W \) is a collider on the path, and neither \( W \) nor any of its descendants are in \( Z \).
\end{enumerate}
\end{definition}
In some proofs, we use the notion of $K$-irreducible adjustment sets, which we define as follows.
\begin{definition}[$K$-irreducible adjustment set]
\label{irreducible}
Let \( \mathcal{G} = (\mathcal{V}, \mathcal{E}) \) be a directed acyclic graph and let $Z \subseteq \mathcal{V} \setminus (A \rightarrow Y)$ with $K \subseteq Z$ be a valid adjustment set with respect to estimating the average treatment effect $\tau$ of variable $A$ on variable $Y$. The adjustment set $Z$ is $K$-irreducible if there exists no subset $Z' \subseteq Z$ with $K \subseteq Z'$, such that $Z'$ is a valid adjustment set.
\end{definition}
This definition is equivalent to the notion of $M$-minimality used in \cite{VANDERZANDER20191}. To avoid confusion with minimum size adjustment sets, we use the term irreducibility instead.

Furthermore, we use the following properties of $K$-irreducible adjustment sets.
\begin{proposition}[Properties of $K$-irreducible adjustment sets]
\label{prop:irreducible}
Let \( \mathcal{G} = (\mathcal{V}, \mathcal{E}) \) be a directed acyclic graph and let $K \subseteq \mathcal{V} \setminus (A \rightarrow Y)$. Then, there exists $Z \subseteq \mathcal{V} \setminus\{A,Y\}$, such that $K \cup Z$ is a $K$-irreducible adjustment set with respect to estimating the average treatment effect $\tau$ of variable $A$ on variable $Y$. Also, any variable $Z_i \in Z$ is an extended confounding variable. 
\end{proposition}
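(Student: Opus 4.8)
The plan is to prove the two assertions in turn. For existence of a $K$-irreducible adjustment set containing $K$, I would first observe that $K\cup\mathrm{Pa}(A,\mathcal{G})$ is itself a valid adjustment set. Under the pre-treatment assumption (Assumption~1) the only edge out of $A$ is $A\to Y$, so the only proper causal path from $A$ to $Y$ is that edge; consequently no node of $\mathcal{V}\setminus\{A,Y\}$ is a forbidden node, and in particular $K\cup\mathrm{Pa}(A,\mathcal{G})\subseteq\mathcal{V}\setminus\{A,Y\}$ contains none. Moreover every non-causal path from $A$ to $Y$ must leave $A$ through some parent $p\in\mathrm{Pa}(A,\mathcal{G})$, on which it has an outgoing edge; hence $p$ is a non-collider on that path and the path is blocked by $K\cup\mathrm{Pa}(A,\mathcal{G})$. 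By the sound-and-complete adjustment criterion \citep{Shipster2010OnTheValidity,perkovi2018complete} this set is valid. The family of valid adjustment sets $Z'$ with $K\subseteq Z'\subseteq\mathcal{V}\setminus\{A,Y\}$ is therefore nonempty and finite, so it has an inclusion-minimal element, which I take to be $K\cup Z$ with $Z\subseteq\mathcal{V}\setminus\{A,Y\}$; minimality is exactly $K$-irreducibility (the $M$-minimality of \citet{VANDERZANDER20191}, read as forbidding proper valid subsets containing $K$).

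For the second assertion, fix $Z_i\in Z$. By $K$-irreducibility, $S:=(K\cup Z)\setminus\{Z_i\}$ is not a valid adjustment set; since $S$ still contains no forbidden node, it must fail to block some proper non-causal path $\pi$ from $A$ to $Y$, while the valid set $K\cup Z=S\cup\{Z_i\}$ does block $\pi$. The key step is to identify how $Z_i$ blocks $\pi$: enlarging a conditioning set can only turn colliders from inactive to active, never the reverse, so the only way $\pi$ can be active given $S$ but blocked given $S\cup\{Z_i\}$ is that $Z_i$ lies on $\pi$ as an internal non-collider. I would then split $\pi$ at $Z_i$ into a subpath $\pi_1$ from $A$ to $Z_i$ and a subpath $\pi_2$ from $Z_i$ to $Y$. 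Because $\pi$ visits $A$ only at its start and $Y$ only at its end, neither $\pi_1$ nor $\pi_2$ contains $A$ or $Y$ as an internal node, and neither uses the edge $A\to Y$; hence both are paths in $\mathcal{G}'=\mathcal{G}\setminus(A\to Y)$ whose internal nodes all lie in $\mathcal{V}\setminus\{A,Y\}$. Taking $K'$ to be the set of (internal) colliders of $\pi_1$, the path $\pi_1$ becomes d-connecting in $\mathcal{G}'$ — every collider of $\pi_1$ is in $K'$ and no non-collider of $\pi_1$ is, since on a simple path the collider and non-collider nodes are disjoint — so $A\not\indep_{\mathcal{G}'}Z_i\mid K'$; symmetrically, with $L'$ the set of colliders of $\pi_2$, we obtain $Z_i\not\indep_{\mathcal{G}'}Y\mid L'$. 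These are precisely the two conditions of Definition~\ref{def:confounding}, so $Z_i$ is an extended confounding variable.

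I expect the main obstacle to be the d-separation bookkeeping in this last step rather than any deep idea: one has to be careful that deleting the edge $A\to Y$ does not spoil the freshly constructed d-connections (handled by noting $\pi_1$ and $\pi_2$ never touch that edge), and that the witness conditioning sets $K',L'$ genuinely lie in $\mathcal{V}\setminus\{A,Y\}$ (handled by noting $\pi$ has $A$ and $Y$ only as endpoints, so the internal nodes of $\pi_1,\pi_2$ avoid them). A smaller point to state explicitly is that, under Assumption~1, ``$S$ is not a valid adjustment set'' is equivalent to ``$S$ fails to block some proper non-causal path from $A$ to $Y$'', which is what licenses extracting the path $\pi$ above. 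Everything else is a routine application of the adjustment criterion and elementary properties of d-separation.
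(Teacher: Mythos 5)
Your proposal is correct and follows essentially the same route as the paper's proof: existence by picking an inclusion-minimal valid superset of $K$ (the paper uses $\mathcal{V}\setminus\{A,Y\}$ as the witness valid set where you use $K\cup\mathrm{Pa}(A,\mathcal{G})$), and the second claim by linking $K$-irreducibility to the fact that any $Z_i$ not d-connected to both $A$ and $Y$ in $\mathcal{G}'$ could be dropped. The only difference is one of direction and detail: the paper argues the contrapositive in three lines (if $Z_i$ is not an extended confounding variable, removing it preserves validity, contradicting irreducibility), whereas you argue constructively by extracting the path left open after removing $Z_i$ and splitting it at $Z_i$, thereby spelling out the d-separation bookkeeping the paper leaves implicit.
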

\begin{proof}
    The first point holds since $\mathcal{V} \setminus \{A, Y\}$ is a valid adjustment set under the pre-treatment assumption. We show the second point by contradiction. Assume that there exists a variable $Z_i \in Z$ that is not an extended confounding variable. Then, $Z_i$ is either d-separated from $A$ or from $Y$ in $\mathcal{G}'$ given any set $X \subseteq\{A, Y\}$. It follows that $K \cup Z \setminus Z_i$ is a valid adjustment set, which contradicts that $K \cup Z$ is $K$-irreducible.
\end{proof}

\section*{Appendix 2}

In the Appendix, we use similar notation as in \cite{henckelGraphicalCriteriaEfficient2022a}. We denote the covariance matrix of a variable set $X$ with $\Sigma_{xx}$, and the covariance matrix between $X$ and $Z$ with $\Sigma_{xz}$. When $|X| =1$ or $|Y|=1$, we write $\sigma_{xz}$ to denote a vector or scalar, similarly we write $\sigma_{xx}$ when $|X| =1$. Furthermore, we use $\Sigma_{xx.z} = \Sigma_{xx} - \Sigma_{xz}\Sigma^{-1}_{zz}\Sigma_{zx}$ or $\sigma_{xx.z}$ respectively. We use $\beta_{yx.s}$ to denote the coefficient $\beta_{yx.s}$ of $X$ on $Y$ when regressing $Y$ on $X$ and $S$.
We use the following results from \cite{henckelGraphicalCriteriaEfficient2022a}, where $X_{-i}=X \setminus X_i$.

\begin{lemma} \citep[Lemma C.2]{henckelGraphicalCriteriaEfficient2022a}
\label{lemma:ci-variance}
  Let $(X^T, Y^T, T^T, S^T, W^T)^T$, with $T,\,S,$ and $W$ possibly of length zero, be a mean $0$ random vector with finite variance, such that $X = (X_1,\dots,X_{k_x})^T$ and $ Y = (Y_1,\dots,Y_{k_y})^T.$ If $T \indep Y \mid W, S, X$ and $S \indep X \mid W, T$, then
  \begin{enumerate}[(a)]
    \item $\sigma_{x_ix_i.x_{-i}wt} \leq \sigma_{x_ix_i.x_{-i}ws}$, 
    \item $\sigma_{y_jy_j.xws} \leq \sigma_{y_jy_j.xwt}$,
  \end{enumerate}
  for all $i \in \{1,\dots,k_x\}$ and $j \in \{1,\dots,k_y\}$.
\end{lemma}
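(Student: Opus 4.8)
# Proof proposal for Lemma~\ref{lemma:ci-variance}

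The plan is to prove both inequalities by reducing them, via standard properties of partial covariances, to a statement about how conditioning on a variable that is \emph{independent} of the target cannot help, while conditioning on one that is \emph{partially correlated} with it can only help (in the appropriate direction). Throughout I would work with the Gaussian / second-moment interpretation of partial covariance, i.e.\ $\sigma_{uu.s}=\operatorname{var}(U-\hat U)$ where $\hat U$ is the $L^2$ projection of $U$ onto $\operatorname{span}(S)$, so that adding regressors never increases a residual variance: $\sigma_{uu.st}\le\sigma_{uu.s}$ always.

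\textbf{Part (b).} Fix $j$. The claim $\sigma_{y_jy_j.xws}\le\sigma_{y_jy_j.xwt}$ would follow if I can show that, after already conditioning on $X,W$, the variable $T$ carries no information about $Y_j$ beyond what $S$ carries — indeed more: that $T\indep Y_j\mid W,S,X$, which is exactly the first hypothesis (specialized to the $j$-th coordinate of $Y$). So the chain I would write is
\begin{equation*}
\sigma_{y_jy_j.xws}\ \le\ \sigma_{y_jy_j.xwst}\ =\ \sigma_{y_jy_j.xwt},
\end{equation*}
where the inequality is ``adding the block $T$ to the conditioning set $X,W,S$ cannot increase the residual variance of $Y_j$'' and the final equality is ``given $X,W,T$, further conditioning on $S$ does not change the residual variance of $Y_j$,'' which is precisely $Y_j\indep S\mid X,W,T$. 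The latter is not literally one of the two stated hypotheses, so the small technical point to check here is that $T\indep Y\mid W,S,X$ together with $S\indep X\mid W,T$ (and the implied conditional-independence / Gaussian structure) forces $Y\indep S\mid X,W,T$; I expect this to follow from a short d-separation-style or Gaussian-conditional-independence argument, essentially because the only route from $S$ to $Y$ that conditioning on $X,W,T$ leaves available would have to pass through $T$, which is blocked once $T$ is conditioned on, given $T\indep Y\mid W,S,X$. This is the step I'd flag as the main obstacle — getting the conditional-independence bookkeeping exactly right, since the lemma is stated for abstract random vectors rather than for variables in a DAG.

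\textbf{Part (a).} This is the mirror image, with the roles of ``helps'' and ``cannot help'' swapped and applied to the $X_i$-residual variance after conditioning on $X_{-i},W$. I would show
\begin{equation*}
\sigma_{x_ix_i.x_{-i}wt}\ \le\ \sigma_{x_ix_i.x_{-i}wts}\quad\text{would be wrong-signed,}
\end{equation*}
so instead the correct chain is $\sigma_{x_ix_i.x_{-i}wt}=\sigma_{x_ix_i.x_{-i}wst}\le\sigma_{x_ix_i.x_{-i}ws}$: the equality because $S\indep X_i\mid X_{-i},W,T$ (a coordinate specialization of the hypothesis $S\indep X\mid W,T$, after noting that conditioning additionally on $X_{-i}$ preserves this independence — again a point to verify carefully), and the inequality because dropping the block $T$ from a conditioning set can only increase a residual variance. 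Finally I would remark that both arguments use only the $L^2$-projection monotonicity of residual variances and the two conditional-independence hypotheses, so nothing beyond finite second moments is needed; the Gaussian assumption in the main text is what guarantees the conditional independences from the graph, but the lemma itself is distribution-free given those independences.
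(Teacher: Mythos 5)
Part (a) of your proposal is essentially sound and matches the structure of the original argument: from the hypothesis $S \indep X \mid W,T$, weak union (a semi-graphoid property valid for any distribution) gives $S \indep X_i \mid X_{-i},W,T$, so $\sigma_{x_ix_i.x_{-i}wt}=\sigma_{x_ix_i.x_{-i}wst}\le\sigma_{x_ix_i.x_{-i}ws}$, the inequality being projection monotonicity when $T$ is dropped. Note that this lemma is quoted from Henckel et al.\ (2022, Lemma C.2) and the present paper does not reprove it; its Corollary~1 records precisely that each part is proved from its ``own'' independence alone, which is the structure your part (a) reproduces.

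Part (b), however, has a genuine gap. Your chain $\sigma_{y_jy_j.xws}\le\sigma_{y_jy_j.xwst}=\sigma_{y_jy_j.xwt}$ is misassembled in two ways. First, the justification of the inequality is reversed: monotonicity of residual variances gives $\sigma_{y_jy_j.xwst}\le\sigma_{y_jy_j.xws}$, not the direction you wrote. Second, the final equality requires $S \indep Y_j \mid X,W,T$, which you correctly flag as the main obstacle, but it does \emph{not} follow from the two hypotheses. Counterexample: let $Y=S+\epsilon$ with $S,\epsilon$ independent standard normals, let $T$ and $X$ be independent of everything, and let $W$ be empty; then $T\indep Y\mid W,S,X$ and $S\indep X\mid W,T$ both hold, yet $\sigma_{yy.xst}=1<2=\sigma_{yy.xt}$, so your intermediate equality fails (the lemma's conclusion $1\le 2$ of course still holds). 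The repair is simply to mirror your own part (a): write $\sigma_{y_jy_j.xws}=\sigma_{y_jy_j.xwst}\le\sigma_{y_jy_j.xwt}$, where the equality uses the stated hypothesis $T\indep Y\mid W,S,X$ (specialized to $Y_j$ by decomposition), i.e.\ adding $T$ to the conditioning set $\{X,W,S\}$ leaves the residual variance of $Y_j$ unchanged, and the inequality is monotonicity from dropping $S$. With that reordering each part uses exactly one hypothesis, no auxiliary conditional independence needs to be derived, and your closing remark that the result is distribution-free given the two independences stands.
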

We also use the following Corollary derived from Lemma~\ref{lemma:ci-variance}:
\begin{corollary}
\label{cor:ci-variance-separated}
  Let $(X^T, Y^T, T^T, S^T, W^T)^T$, with $T,\,S,$ and $W$ possibly of length zero, be a mean $0$ random vector with finite variance, such that $X = (X_1,\dots,X_{k_x})^T$ and $ Y = (Y_1,\dots,Y_{k_y})^T.$ If $S \indep X \mid W, T$, then
  \begin{enumerate}[(a)]
    \item $\sigma_{x_ix_i.x_{-i}wt} \leq \sigma_{x_ix_i.x_{-i}ws}$,
  \end{enumerate}
  for all $i \in \{1,\dots,k_x\}$ and $j \in \{1,\dots,k_y\}$. If $T \indep Y \mid W, S, X$, then
  \begin{enumerate}[(b)]
    \item $\sigma_{y_jy_j.xws} \leq \sigma_{y_jy_j.xwt}$,
  \end{enumerate}
  for all $i \in \{1,\dots,k_x\}$ and $j \in \{1,\dots,k_y\}$.
\end{corollary}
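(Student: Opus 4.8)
The plan is to derive both parts of Corollary~\ref{cor:ci-variance-separated} from Lemma~\ref{lemma:ci-variance} by a dummy-variable argument: in each part the conditional independence that is \emph{not} assumed will be supplied for free by augmenting the random vector with an auxiliary coordinate that is independent of every other coordinate, and this augmentation leaves unchanged every partial variance appearing in the statement.

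For part (a), I would introduce a scalar random variable $\tilde Y$ independent of $(X^T, T^T, S^T, W^T)^T$ and apply Lemma~\ref{lemma:ci-variance} to the vector $(X^T, \tilde Y, T^T, S^T, W^T)^T$. The hypothesis $S \indep X \mid W, T$ is assumed, and $T \indep \tilde Y \mid W, S, X$ holds trivially because $\tilde Y$ is independent of $(X, T, S, W)$; hence conclusion (a) of the lemma gives $\sigma_{x_i x_i . x_{-i} w t} \leq \sigma_{x_i x_i . x_{-i} w s}$ for all $i$. Since none of these partial variances involves $\tilde Y$, and $\tilde Y$ does not alter the joint distribution of $(X, T, S, W)$, this is exactly the claimed inequality.

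For part (b), the trick is to relabel which block plays the role of ``$W$'' in the lemma. I would introduce a scalar $\tilde X$ independent of $(X^T, Y^T, T^T, S^T, W^T)^T$ and apply Lemma~\ref{lemma:ci-variance} to $(\tilde X, Y^T, T^T, S^T, (W^T, X^T))^T$, i.e.\ with the lemma's ``$X$'' taken to be the singleton $\tilde X$ and its ``$W$'' taken to be $W \cup X$. Then the lemma's first hypothesis reads $T \indep Y \mid (W \cup X), S, \tilde X$, which, because $\tilde X$ is independent of all other variables, is equivalent to the assumed $T \indep Y \mid W, S, X$; and its second hypothesis reads $S \indep \tilde X \mid (W \cup X), T$, which holds trivially. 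Conclusion (b) then becomes $\sigma_{y_j y_j . \tilde x w x s} \leq \sigma_{y_j y_j . \tilde x w x t}$ for all $j$, and deleting the inert coordinate $\tilde X$ yields $\sigma_{y_j y_j . x w s} \leq \sigma_{y_j y_j . x w t}$, as required.

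The one thing to handle carefully is the bookkeeping in part (b): one must verify that moving $X$ into the conditioning block ``$W$'' and using a fresh independent singleton in the ``$X$'' slot both preserves the two conditional independence statements the lemma needs and leaves the two partial variances in conclusion (b) unchanged after deleting $\tilde X$ --- both of which reduce to the elementary fact that a coordinate independent of all others has zero regression coefficient and hence affects neither partial variances nor conditional independences. As an alternative that bypasses Lemma~\ref{lemma:ci-variance} altogether, one can argue directly from the monotonicity of partial variance under conditioning: $\sigma_{x_i x_i . x_{-i} w t s} \leq \sigma_{x_i x_i . x_{-i} w s}$ and $\sigma_{y_j y_j . x w s t} \leq \sigma_{y_j y_j . x w t}$, while $S \indep X \mid W, T$ (respectively $T \indep Y \mid W, S, X$) forces $\sigma_{x_i x_i . x_{-i} w t s} = \sigma_{x_i x_i . x_{-i} w t}$ (respectively $\sigma_{y_j y_j . x w s t} = \sigma_{y_j y_j . x w s}$) because adjoining a partially uncorrelated regressor does not change the residual variance; chaining these equalities and inequalities gives both conclusions.
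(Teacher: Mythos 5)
Your main argument is correct, but it takes a genuinely different route from the paper. The paper's proof is a one-line appeal to the \emph{internals} of Lemma C.2 of \citet{henckelGraphicalCriteriaEfficient2022a}: it observes that in that proof each of the two conclusions is established using only its corresponding conditional-independence hypothesis, so each half of the corollary holds under its own hypothesis alone. You instead treat Lemma~\ref{lemma:ci-variance} as a black box and manufacture the missing hypothesis by augmenting the vector with an independent mean-zero dummy coordinate ($\tilde Y$ for part (a); $\tilde X$ with the original $X$ absorbed into the $W$-slot for part (b)), then discard the inert coordinate, which changes neither the relevant conditional independences nor the partial variances since its regression coefficients are zero. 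This is a clean and self-contained derivation that does not require inspecting the cited proof, and the bookkeeping you flag in part (b) checks out: $\tilde X \indep (X,Y,T,S,W)$ makes $T \indep Y \mid W,X,S$ equivalent to $T \indep Y \mid W,X,S,\tilde X$ and makes $S \indep \tilde X \mid W,X,T$ automatic, while $\sigma_{y_jy_j.\tilde x w x s}=\sigma_{y_jy_j.wxs}$. Your closing ``alternative'' argument (monotonicity of residual variance plus the equality forced by the conditional independence) is in substance the paper's intended route, i.e.\ the internal structure of Henckel et al.'s proof; note only that the step ``conditional independence $\Rightarrow$ unchanged residual variance of the best \emph{linear} predictor'' is not valid for arbitrary distributions (it requires zero partial covariance, e.g.\ via Gaussianity or linearity), so as a standalone fully general proof that sketch has a gap --- though the same caveat attaches to the cited lemma at that level of generality, and it is harmless in the linear Gaussian setting in which the corollary is applied.
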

\begin{proof}
    Follows directly from the proof in \citet[Lemma C.2]{henckelGraphicalCriteriaEfficient2022a}, where each property is proved independently based on the corresponding d-separation.
\end{proof}

\begin{lemma} \citep[Lemma C.5]{henckelGraphicalCriteriaEfficient2022a}
\label{lemma:equal-coeffs}
Let \((X^T, Y^T, S^T, T^T)^T\), with \(S\) possibly of length zero, be a mean \(0\) random vector with finite variance. If \(T \indep X \mid S\) or \(T \indep Y \mid X, S\), then \(\beta_{yx.s} = \beta_{yx.st}\). Furthermore, if \(T \indep Y \mid X, S\), then $\Sigma_{yy.xst} = \Sigma_{yy.xs}$.
\end{lemma}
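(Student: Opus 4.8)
The plan is to translate each conditional independence hypothesis into a statement about vanishing \emph{partial covariances}, and then to manipulate the Schur-complement (one-step conditioning) formulas for conditional covariance matrices together with the partitioned-regression expression $\beta_{yx.s} = \Sigma_{yx.s}\Sigma_{xx.s}^{-1}$. All four quantities in the statement ($\beta_{yx.s}$, $\beta_{yx.st}$, $\Sigma_{yy.xs}$, $\Sigma_{yy.xst}$) are second-moment objects, so once the hypotheses are in algebraic form the proof reduces to linear algebra. In the linear Gaussian setting a conditional independence is equivalent to a vanishing partial correlation, so $T \indep X \mid S$ gives $\Sigma_{tx.s}=0$ and $T \indep Y \mid X,S$ gives $\Sigma_{ty.xs}=0$; these are the only consequences of the hypotheses I will use.

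First I would handle the coefficient equality under $T \indep X \mid S$ by direct substitution. Adding $T$ to the conditioning set $S$ gives the recursions
\[
\Sigma_{xx.st} = \Sigma_{xx.s} - \Sigma_{xt.s}\Sigma_{tt.s}^{-1}\Sigma_{tx.s}, \qquad \Sigma_{yx.st} = \Sigma_{yx.s} - \Sigma_{yt.s}\Sigma_{tt.s}^{-1}\Sigma_{tx.s},
\]
and substituting $\Sigma_{tx.s}=\Sigma_{xt.s}^{T}=0$ collapses both right-hand sides to their $S$-only versions. Hence $\Sigma_{xx.st}=\Sigma_{xx.s}$ and $\Sigma_{yx.st}=\Sigma_{yx.s}$, and plugging into $\beta_{yx.st}=\Sigma_{yx.st}\Sigma_{xx.st}^{-1}$ yields $\beta_{yx.st}=\Sigma_{yx.s}\Sigma_{xx.s}^{-1}=\beta_{yx.s}$.

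Next I would treat both conclusions under $T \indep Y \mid X, S$ via a projection argument, which is cleaner here because the conditioning set of the hypothesis is $\{X,S\}$ rather than $\{S\}$. In the regression of $Y$ on $(X,S,T)$, the coefficient on $T$ is $\beta_{yt.xs}=\Sigma_{yt.xs}\Sigma_{tt.xs}^{-1}=0$ since $\Sigma_{yt.xs}=0$. Consequently the residual $Y-\beta_{yx.st}X-\beta_{ys.xt}S-\beta_{yt.xs}T$ equals $Y-\beta_{yx.st}X-\beta_{ys.xt}S$ and is orthogonal to $X$, $S$ and $T$; in particular it is orthogonal to $\mathrm{span}(X,S)$, so by uniqueness of the linear projection onto $\mathrm{span}(X,S)$ we get $\beta_{yx.st}=\beta_{yx.s}$ (and $\beta_{ys.xt}=\beta_{ys.x}$). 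Since the two residuals coincide, so do their covariance matrices, giving $\Sigma_{yy.xst}=\Sigma_{yy.xs}$; equivalently this follows from $\Sigma_{yy.xst}=\Sigma_{yy.xs}-\Sigma_{yt.xs}\Sigma_{tt.xs}^{-1}\Sigma_{ty.xs}$ with $\Sigma_{yt.xs}=0$.

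The step I would be most careful about is the initial translation from conditional independence to a vanishing partial covariance, which is exactly where the linear Gaussian assumption enters: without linearity of the conditional expectations, conditional independence need not force the relevant regression residuals to be uncorrelated. Everything afterwards is routine Schur-complement algebra, and the only auxiliary check is that the conditional covariance matrices $\Sigma_{xx.s}$, $\Sigma_{tt.s}$ and $\Sigma_{tt.xs}$ are invertible, which holds under the finite-variance non-degeneracy assumptions implicit in defining these regressions.
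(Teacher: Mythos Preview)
The paper does not supply its own proof of this lemma; it is quoted from Henckel et al.\ as an auxiliary tool, so there is nothing to compare against. Your argument is correct in the linear Gaussian setting of the paper and follows the standard route: translate each conditional-independence hypothesis into a vanishing partial covariance, then apply the Frisch--Waugh/Schur-complement identities $\beta_{yx.st}=\Sigma_{yx.st}\Sigma_{xx.st}^{-1}$ and $\Sigma_{yy.xst}=\Sigma_{yy.xs}-\Sigma_{yt.xs}\Sigma_{tt.xs}^{-1}\Sigma_{ty.xs}$.

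You are also right to flag the translation step as the place where Gaussianity enters, and the caveat is substantive: for a general mean-zero finite-variance vector the implication can fail. Take $X\sim N(0,1)$, $T=Y=X^{2}-1$, $S=\emptyset$; then $T\indep Y\mid X$, yet $\Sigma_{yt.x}=2$ and $\Sigma_{yy.xt}=0\neq 2=\Sigma_{yy.x}$, so the second conclusion of the lemma breaks. The lemma as literally stated therefore needs an extra hypothesis such as joint Gaussianity or linearity of the relevant conditional expectations, which is exactly what the surrounding paper imposes globally. Within that scope your proof is sound and self-contained.
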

Furthermore, we use the following results from \citet{pmlr-v213-pena23a}:

\begin{lemma} \citep[Lemma~16]{pmlr-v213-pena23a}
        \label{lemma:equal-cov}
        Consider a path diagram $\mathcal{G}$. Let $X$, $Y$ and $W$ be nodes and $Z$ a set of nodes. If $X \indep_{\mathcal{G}} W \mid Z$ or $Y \indep_{\mathcal{G}} W \mid Z$, then $\sigma_{xy.zw} = \sigma_{xy.z}$. 
    \end{lemma}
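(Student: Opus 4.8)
The plan is to reduce the statement to the classical recursion that describes how a partial covariance changes when one additional variable is added to the conditioning set, and then to show that the correction term in that recursion vanishes under the d-separation hypothesis.

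First I would record the single-variable recursion: for the scalars $X$, $Y$, $W$ and the conditioning set $Z$,
\[
\sigma_{xy.zw} = \sigma_{xy.z} - \frac{\sigma_{xw.z}\,\sigma_{yw.z}}{\sigma_{ww.z}}.
\]
I would derive this from the residual interpretation of partial covariance rather than quote it. Writing $e_x$, $e_y$, $e_w$ for the residuals of $X$, $Y$, $W$ after their linear projection onto $Z$, we have $\sigma_{xy.z} = \mathrm{cov}(e_x, e_y)$, $\sigma_{xw.z} = \mathrm{cov}(e_x, e_w)$, and so on, while conditioning additionally on $W$ amounts to further residualizing $e_x$ and $e_y$ on $e_w$. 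The covariance of these twice-residualized quantities is exactly $\mathrm{cov}(e_x, e_y) - \mathrm{cov}(e_x, e_w)\,\mathrm{cov}(e_y, e_w)/\mathrm{var}(e_w)$, which is the displayed identity, and the denominator $\sigma_{ww.z} = \mathrm{var}(e_w)$ is strictly positive whenever the joint covariance matrix is nonsingular, so the expression is well defined.

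Next I would translate the graphical hypothesis into a statement about partial covariances. Under the linear Gaussian model and the Markov and faithfulness correspondence recalled in the Preliminaries, the d-separation $X \indep_{\mathcal{G}} W \mid Z$ implies the conditional independence $X \indep W \mid Z$. For jointly Gaussian variables, conditional independence given $Z$ is equivalent to the vanishing of the corresponding partial covariance, i.e. $\sigma_{xw.z} = 0$. Substituting $\sigma_{xw.z} = 0$ into the recursion immediately yields $\sigma_{xy.zw} = \sigma_{xy.z}$. The alternative hypothesis $Y \indep_{\mathcal{G}} W \mid Z$ is handled symmetrically: it gives $\sigma_{yw.z} = 0$, which again annihilates the correction term and forces the same equality.

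The main obstacle I anticipate is justifying the two ingredients cleanly rather than the algebra itself. The recursion, though standard, should be derived, and the residual/projection argument is the most transparent route; some care is needed to state the nonsingularity condition guaranteeing $\sigma_{ww.z} > 0$. The second delicate point is the passage from d-separation to a vanishing partial covariance, which relies on the global Markov property together with the Gaussian equivalence between conditional independence and zero partial covariance. In the more general path-diagram setting of \citet{pmlr-v213-pena23a} the same conclusion follows from the trek rule, but in the present linear Gaussian setting the correspondence stated in the Preliminaries already suffices.
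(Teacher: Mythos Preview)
The paper does not prove this lemma; it is quoted verbatim from \citet[Lemma~16]{pmlr-v213-pena23a} and used as a black box in the proofs of Lemmas~\ref{lemma:confounding} and~\ref{lemma:irrelevant}. There is therefore no in-paper proof to compare against.

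Your argument is correct and self-contained for the linear Gaussian setting of this paper: the one-variable recursion $\sigma_{xy.zw} = \sigma_{xy.z} - \sigma_{xw.z}\sigma_{yw.z}/\sigma_{ww.z}$ together with the Markov property (d-separation $\Rightarrow$ conditional independence $\Rightarrow$ zero partial covariance under Gaussianity) immediately gives the result. You are also right that Pe\~na's original statement is for general path diagrams and is proved via the trek rule, which does not require Gaussianity or faithfulness; your version is narrower but entirely sufficient for every application in this paper, and arguably cleaner for the reader since it stays within the assumptions already laid out in the Preliminaries. One small remark: faithfulness is not needed here, only the Markov direction, so you could drop that word from your justification.
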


\subsection*{Proof of Theorem 1 (Sample size criterion)}
\label{app:proof-sample-size-criterion}
The variance of the ordinary least squares estimator with any adjustment set \(K\) is
\begin{align}
    \label{eq:ols-var-1}
    \mathrm{var}(\hat \tau_{K}) \;=\; 
    E\!\Bigl(\frac{\sigma_{yy.ak}}{\mathrm{RSS}_{a.k}}\Bigr),
\end{align}
where \(\mathrm{RSS}_{a.k}\) is the residual sum of squares after regressing \(A\) on \(K\). Since \(\mathrm{RSS}_{a.k} / \sigma_{aa.k}\) follows an inverse-chi-squared distribution, and recalling \(\mathrm{aVar}(\hat \tau_{K}) = \sigma_{yy.ak} \,/\, \sigma_{aa.k}\), it follows that
\begin{align}
    \label{eq:ols-var-2}
    \mathrm{var}(\hat \tau_{K}) 
    \;=\; 
    \frac{\mathrm{aVar}(\hat \tau_{K})}{\,n - |K| - 3\,}.
\end{align}
Hence,
\begin{align*}
    \mathrm{MSE}(\hat \tau_{K}) - \mathrm{MSE}(\hat \tau_{L})
    =
    B^2(\hat \tau_{K}) - B^2(\hat \tau_{L})
    +
    \frac{\mathrm{aVar}(\hat \tau_{K})}{n - |K| - 3} 
    -
    \frac{\mathrm{aVar}(\hat \tau_{L})}{n - |L| - 3}.
\end{align*}
It follows that \(\mathrm{MSE}(\hat \tau_{K}) < \mathrm{MSE}(\hat \tau_{L})\) if and only if
\begin{align}
    B^2(\hat \tau_{K}) - B^2(\hat \tau_{L})
    <
    \frac{\mathrm{aVar}(\hat \tau_{L})}{n - |L| - 3}
    -
    \frac{\mathrm{aVar}(\hat \tau_{K})}{n - |K| - 3} ,
    \label{eq:ssc-rewritten}
\end{align}
which can be rewritten into Equation~(3) from the main paper.

\subsection*{Partitioning of covariates}
For a complete partitioning of all covariates $\mathcal{V} \setminus \{A, Y\}$, we define the set of \emph{irrelevant variables} $\mathcal{I}$, such that, with $\mathcal{P}$ and $\mathcal{W}$ as in Definition~4 and Definition~5 of the main paper,
\begin{align*}
    \mathcal{V}\setminus \{A, Y\} &= \mathcal{P} \cup \mathcal{W} \cup \mathcal{I}.  
\end{align*}

\begin{definition}[Irrelevant Variables]
\label{def:irrelevant}
Let $\mathcal{V}$ be the set of variables in a directed acyclic graph $\mathcal{G}=(\mathcal{V}, \mathcal{E})$ describing the causal relations of $\mathcal{V}$, with $A, Y \in \mathcal{V}$. For estimating the causal effect $\tau$, a variable $V_i \in \mathcal{V}\setminus\{A,Y\}$ is an irrelevant variable, if in  $\mathcal{G}'=\mathcal{G} \setminus (A \rightarrow Y)$ it is d-separated from $Y$ given $K$ for all $K \subseteq \mathcal{V} \setminus \{A,Y\}$. We denote the set of irrelevant variables in $\mathcal{G}$ by $\mathcal{I}$.
\end{definition}
To see that precision, extended confounding, and irrelevant variables form a complete partitioning of all variables, we recall that both precision and extended confounding variables $V_i \in \mathcal{P} \cup \mathcal{W}$ require that there exists a set $Z \subseteq \mathcal{V} \setminus \{A, Y\}$, such that $V_i \not\indep_{\mathcal{G}'} Y \mid Z$ in $\mathcal{G}'$. Since precision variables additionally require that (1) \emph{for all} $X \subseteq \mathcal{V} \setminus \{A, Y\}$, it holds that $V_i \indep_{\mathcal{G}'} A \mid X$, and extended confounding variables require that (2) there exists at least one set $X \subseteq \mathcal{V} \setminus \{A, Y\}$, such that $V_i \not\indep_{\mathcal{G}'} A \mid X$, it follows that $\mathcal{P}$ and $\mathcal{W}$ are disjunct. Furthermore, for any variable $V_i \in \mathcal{V}\setminus \{A,Y\}$, either (1) or (2) holds. It follows that the union $\mathcal{P} \cup \mathcal{W}$ is a complete partitioning of all variables, for which there exists a set $Z \subseteq \mathcal{V} \setminus \{A, Y\}$, such that $V_i \not\indep_{\mathcal{G}'} Y \mid Z$ in $\mathcal{G}'$.

The negation of $\mathcal{P} \cup \mathcal{W}$ in $\mathcal{V} \setminus \{A, Y\}$ consists of all variables $V_i \in \mathcal{V} \setminus \{A, Y\}$, for which there does not exist a set $Z \subseteq \mathcal{V} \setminus \{A, Y\}$, such that $V_i \not\indep_{\mathcal{G}'} Y \mid Z$ in $\mathcal{G}'$. This is equivalent to the set of all variables $V_i \in \mathcal{V} \setminus \{A, Y\}$, for which $V_i \indep_{\mathcal{G}'} Y \mid Z$ in $\mathcal{G}'$ for all $Z \subseteq \mathcal{V} \setminus \{A, Y\}$. It follows that the negation of $\mathcal{P} \cup \mathcal{W}$ in $\mathcal{V} \setminus \{A, Y\}$ is the set of irrelevant variables $\mathcal{I}$, and hence $\mathcal{P} \cup \mathcal{W} \cup \mathcal{I}$ is a complete partitioning of all variables $\mathcal{V}\setminus \{A, Y\}$.

\subsection*{d-separation properties}

\begin{lemma}[d-separation properties of precision variables]
\label{lemma-d-separation-P}
Let $P \subseteq\mathcal{P}$ be a set of precision variables in the directed acyclic graph $\mathcal{G} = (\mathcal{V}, \mathcal{E})$ with $A, Y \in \mathcal{V}$ for estimating the causal effect $\tau$ of $A$ on $Y$. Let $Z \subseteq \mathcal{V} \setminus \{A, Y \}$ and let $U \subseteq \mathcal{W}$ be a set of extended confounding variables, such that $Z \cup P \cup U$ is $Z \cup P$-irreducible for estimating $\tau$. The following d-separations hold in $\mathcal{G}'= \mathcal{G} \setminus (A \rightarrow Y)$: 
\begin{enumerate}[(i)]
    \item \label{P-d-sep-i}$A \indep_{\mathcal{G}'} Y \mid  Z \cup P \cup U$,
    \item \label{P-d-sep-ii}$A \indep_{\mathcal{G}'} Y \mid  Z  \cup U$.

\end{enumerate}
  Hence, the sets $Z \cup P \cup U$ and $Z \cup U$ are both valid adjustment sets. The following d-separations hold in the original graph $\mathcal{G}$:
    \begin{enumerate}[(a)]
        \item \label{d-sep-P-2} $P \indep_{\mathcal{G}} A \mid Z$,
        \item \label{d-sep-P-1} $P \indep_{\mathcal{G}} U \mid A \cup Z$.
    \end{enumerate}
    Let $S$ be a suboptimal precision variable in $\mathcal{G}$. Let $P^*$ be another precision variable, such that $S \indep_{\mathcal{G}'} Y \mid P^* \cup Z$ in $\mathcal{G}'$ for all $Z \subseteq \mathcal{V} \setminus \{A, Y \}$. Then, also the following d-separation holds in the original graph $\mathcal{G}$:
    \begin{enumerate}[(c)]
        \item \label{d-sep-P-4} $S \indep_{\mathcal{G}} Y \mid Z \cup P^*$.
    \end{enumerate}   
\end{lemma}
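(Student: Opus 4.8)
The lemma has four distinct claims (two d-separations in $\mathcal{G}'$, two in $\mathcal{G}$, plus the final one about suboptimal precision variables), so the plan is to address them in the order listed, reusing earlier parts as we go.

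\emph{Claims (i) and (ii).} For (i), note that $Z \cup P \cup U$ is by hypothesis a valid adjustment set (it is $Z\cup P$-irreducible, hence in particular an adjustment set), and since all covariates are pre-treatment, validity is equivalent to the statement $A \indep_{\mathcal{G}'} Y \mid Z \cup P \cup U$ in $\mathcal{G}' = \mathcal{G}\setminus(A\to Y)$ — this is the generalized back-door / adjustment criterion specialized to the pre-treatment case (the only proper causal path $A\to Y$ has been deleted, so validity reduces to d-separation in $\mathcal{G}'$). For (ii) I would remove the precision variables $P$ one at a time. By Definition~4 of a precision variable, each $P_i \in P$ satisfies $P_i \indep_{\mathcal{G}'} A \mid X$ for \emph{all} $X$; in particular $P_i$ is d-separated from $A$ in $\mathcal{G}'$ given $Z \cup U \cup (P\setminus P_i)$. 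Since $\mathcal{G}'$ is still a DAG, I can invoke Lemma~5 (with $X=A$, the conditioning set $S = Z\cup U\cup(P\setminus P_i)$, $W=Y$, $T=P_i$): from $A \indep_{\mathcal{G}'} Y \mid Z\cup U\cup P$ and $P_i \indep_{\mathcal{G}'} A \mid Z\cup U\cup(P\setminus P_i)$ we conclude $A \indep_{\mathcal{G}'} Y \mid Z\cup U\cup(P\setminus P_i)$. Iterating over all of $P$ gives (ii). The ``hence both are valid adjustment sets'' statement is then immediate by the same validity$\Leftrightarrow$d-separation equivalence used for (i).

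\emph{Claims (a) and (b).} Claim (a), $P \indep_{\mathcal{G}} A \mid Z$, is again essentially the definition of a precision variable, but now we must pass from d-separation in $\mathcal{G}'$ to d-separation in $\mathcal{G}$. Because $A$ has no descendants among the covariates (pre-treatment assumption), adding back the edge $A\to Y$ cannot open any path between a covariate and $A$ that does not pass through $Y$; and any path through $Y$ would require $Y$ to be a non-collider in the conditioning set $Z$ (impossible, $Y\notin Z$) or a collider with a descendant in $Z$ (impossible, $Y$'s only descendants among covariates are none). So $P_i \indep_{\mathcal{G}'} A \mid Z$ for each $P_i$ (definition, taking $X=Z$) upgrades to $P_i \indep_{\mathcal{G}} A \mid Z$, and since this holds for every element, $P \indep_{\mathcal{G}} A \mid Z$. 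For (b), $P \indep_{\mathcal{G}} U \mid A\cup Z$: I would use the $Z\cup P$-irreducibility of $Z\cup P\cup U$ together with (a). Irreducibility means no proper subset of $Z\cup P\cup U$ containing $Z\cup P$ is still valid; in particular each $U_j$ is "needed", which forces each $U_j$ to be d-connected to $A$ given the rest (this is the kind of statement Proposition~1 packages). The cleanest route is probably: suppose some path between a $P_i$ and a $U_j$ is open given $A\cup Z$ in $\mathcal{G}$; combined with (a) ($P_i$ separated from $A$ given $Z$) and with $A\indep_{\mathcal{G}'}Y\mid Z\cup P\cup U$, this open $P_i$–$U_j$ path, concatenated appropriately, would contradict either (i) or the irreducibility, because it would let one drop $P_i$ while staying valid, or it would reveal a d-connection $A\not\indep Y$ given the set. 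I expect this to be the main obstacle — making the "concatenation argument" rigorous and checking collider status along the glued path — so I would route it through Corollary~\ref{cor:ci-variance-separated}-style separation lemmas or directly through Proposition~\ref{prop:irreducible} if that suffices.

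\emph{Claim (c).} Given a suboptimal precision variable $S$ with witness $P^*$ satisfying $S \indep_{\mathcal{G}'} Y \mid P^*\cup Z$ for all $Z$, we must show $S \indep_{\mathcal{G}} Y \mid Z\cup P^*$ in the \emph{original} graph. The only edge difference is $A\to Y$. A path from $S$ to $Y$ that is open in $\mathcal{G}$ given $Z\cup P^*$ but blocked in $\mathcal{G}'$ must use the edge $A\to Y$, hence end $\cdots \to A \to Y$; so it is an open $S$–$A$ path in $\mathcal{G}'$ given $Z\cup P^*$ with $A$ a non-collider at the $A$-end. But $S$ is a precision variable, so $S \indep_{\mathcal{G}'} A \mid Z\cup P^*$ — the $S$–$A$ segment is blocked, contradiction. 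Hence no such path exists and $S\indep_{\mathcal{G}} Y\mid Z\cup P^*$. Throughout, the recurring technical point (and where I would be most careful) is the bookkeeping when re-inserting $A\to Y$: one must consistently use that $A$ has no covariate-descendants and that $Y\notin$ any conditioning set, so that the edge $A\to Y$ can only create new active paths that go through $Y$ as an intermediate non-collider, which is ruled out.
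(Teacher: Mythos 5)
Parts (i), (ii), (a) and (c) of your proposal are essentially the paper's own argument: (i) is validity of the $Z\cup P$-irreducible set, (ii) is obtained by cancelling the precision variables using $P\indep_{\mathcal{G}'}A\mid Z\cup U$ (the paper does this in one shot for the set $P$ via the contraction property; your one-at-a-time version is the same idea), and (a), (c) use exactly the paper's observation that re-inserting $A\to Y$ can only create paths on which $Y$ is a collider with no descendants among the covariates, respectively that an open $S$--$Y$ path through $A\to Y$ would contain an open $S$--$A$ segment contradicting the precision property. One correction for (ii): the result you invoke as ``Lemma~5'' (with the substitution $X=A$, $T=P_i$, etc.) is a conditional-variance statement, not a rule for deriving new d-separations; the step you need is the contraction (plus decomposition) property of d-separation, which is what the paper cites.

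The genuine gap is claim (b), $P\indep_{\mathcal{G}}U\mid A\cup Z$, which you yourself flag as ``the main obstacle'' and leave as a sketch. Your sketch also aims the contradiction at the wrong target: you suggest an open $P_i$--$U_j$ path given $A\cup Z$ would ``contradict either (i) or the irreducibility, because it would let one drop $P_i$ while staying valid,'' but neither of these is what fails. The paper's argument is: suppose some path between $U_i$ and $P_i$ in $\mathcal{G}$ avoids $Y$; since $U_i$ is an \emph{extended confounding} variable, there is also an $A$--$U_i$ path avoiding $Y$, and concatenating the two (and passing to the induced simple path) gives an $A$--$P_i$ path avoiding $Y$; conditioning on the colliders of that path (a subset of $\mathcal{V}\setminus\{A,Y\}$) d-connects $P_i$ and $A$, contradicting the precision-variable property (a), which holds for \emph{every} conditioning subset. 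Hence every $U_i$--$P_i$ path contains $Y$, and since $Y$ has no children among the covariates it is a collider on any such path with no descendants in $A\cup Z$, so all these paths are blocked, giving $P\indep_{\mathcal{G}}U\mid A\cup Z$. You mention the right ingredients (Proposition~1 guaranteeing $U_j$ is extended confounding, and property (a)), but you never assemble them into this concatenation-and-contradiction argument, so as written the proof of (b) is missing rather than merely routed differently.
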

\begin{proof}
First, we show each d-separation in $\mathcal{G}'$ separately.
\begin{enumerate}[(i)]
    \item Holds because $Z \cup P \cup U$ is a valid adjustment set by Definition~\ref{irreducible}.
    \item This is implied from \ref{P-d-sep-i} $A \indep_{\mathcal{G}'} Y \mid  Z \cup P \cup U$ and $P \indep_{\mathcal{G}'} A \mid Z \cup U$, which holds by the definition of precision variables, by the contraction property \citep{forre2017markov, dawid1979}.
\end{enumerate}
Now we show each d-separation in $\mathcal{G}$ separately.
\begin{enumerate}[(a)]
    \item $P \indep_{\mathcal{G}'} A \mid Z$ holds by the definition of precision variables. Any path $\pi$ between a variable $P_i \in P$ and $A$ that is in $\mathcal{G}'$ but not in $\mathcal{G}$ must contain the edge $A \rightarrow Y$, such that $\pi$ contains the collider $Y$ and is blocked. It follows that $P \indep_{\mathcal{G}} A \mid Z$ also in $\mathcal{G}$.
    \item We show by contradiction that all paths between any variable $U_i \in U$ and any variable $P_i \in P$ in $\mathcal{G}$ must contain $Y$. Assume there exists a simple path $\pi = (U_i, \dots, P_i)$ in $\mathcal{G}$ with $Y \not \in \pi$. Then, there also exists a path $\pi' = (A, \dots, U_i, \dots, P_i)$ with $Y \not\in \pi'$, since $U_i$ is an extended confounding variable. It follows that $P_i \not\indep_{\mathcal{G}} A \mid X$, where $X \subseteq \mathcal{V} \setminus \{A, Y \}$ contains all colliders on the simple path derived from $\pi$. This contradicts \ref{d-sep-P-2}, and we conclude that all paths between a variable $U_i \in U$ and a variable $P_i \in P$ must contain $Y$. It follows that all paths between $U_i$ and $P_i$ are blocked given $A \cup Z$ for all $U_i \in U$ and all $P_i \in P$, and we conclude $P \indep_{\mathcal{G}} U \mid K \cup A$.  
    \item $S \indep_{\mathcal{G}'} Y \mid Z \cup P^*$ holds in $\mathcal{G}'$ by the definition of suboptimal precision variables. We show that it also holds in $\mathcal{G}$ by contradiction. Assume that $S \not\indep_{\mathcal{G}} Y \mid Z \cup P^*$ in $\mathcal{G}$. Any open path between $S$ and $Y$ given $Z \cup P^*$ must contain the edge $A \rightarrow Y$ and therefore the node $A$, since otherwise $S \not\indep_{\mathcal{G}'} Y \mid Z \cup P^*$. It follows that there is an open path between $S$ and $A$ given $Z \cup P^*$, which contradicts \ref{d-sep-P-2} with $P = \{S\}$ and $Z = Z \cup P^*$.
\end{enumerate}

\end{proof}

\begin{lemma}[d-separation properties of suboptimal confounding variables]
\label{lemma-d-separation-W}
    Let $S$ be a suboptimal confounding variable in a directed acyclic graph $\mathcal{G} = (\mathcal{V}, \mathcal{E})$ with $A, Y \in \mathcal{V}$ for estimating the causal effect $\tau$ of $A$ on $Y$. Let $W^*$ be another extended confounding variable, such that $S \indep_{\mathcal{G}'} Y \mid W^* \cup Z$ and $W^* \indep_{\mathcal{G}'} A \mid Z \cup S$, where $Z \subseteq \mathcal{V} \setminus \{A, Y \}$ and $\mathcal{G}' = \mathcal{G} \setminus (A \rightarrow Y)$. Let $U \subseteq \mathcal{W}$ be a set of extended confounding variables, such that $S \cup  W^* \cup Z \cup U$ is $S \cup  W^* \cup Z$-irreducible for estimating $\tau$. The following d-separations hold in $\mathcal{G}'$:
    \begin{enumerate}[(i)]
        \item \label{d-sep-i}$A \indep_{\mathcal{G}'} Y \mid S \cup W^* \cup Z \cup U$,
        \item \label{d-sep-ii}$A \indep_{\mathcal{G}'} Y \mid S \cup Z \cup U$,
        \item \label{d-sep-iii}$A \indep_{\mathcal{G}'} Y \mid W^* \cup Z \setminus S  \cup U$.
    \end{enumerate}
    Hence, the sets $S \cup Z \cup U$ and $W^* \cup Z \setminus S  \cup U$ are also valid adjustment sets. The following d-separations hold in the original graph $\mathcal{G}$:
    \begin{enumerate}[(a)]
        \item \label{d-sep-5} $W^* \indep_{\mathcal{G}} A \mid Z \cup S$,
        \item \label{d-sep-3} $S \indep_{\mathcal{G}} U \mid Z$,
        \item \label{d-sep-4} $W^* \indep_{\mathcal{G}} U \mid Z \setminus S$,
        \item \label{d-sep-1} $W^* \indep_{\mathcal{G}} U \mid A \cup Z \cup S$,
        \item \label{d-sep-2} $S \indep_{\mathcal{G}} Y \mid A \cup Z \setminus S \cup W^* \cup U$.
    \end{enumerate}
\end{lemma}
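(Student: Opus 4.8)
The plan is to follow the template of Lemma~\ref{lemma-d-separation-P}, with $S$ now playing the role a precision variable played there but ``anchored'' at its companion $W^*$. The two defining relations of a suboptimal confounding variable, $S \indep_{\mathcal{G}'} Y \mid W^* \cup Z'$ and $W^* \indep_{\mathcal{G}'} A \mid S \cup Z'$ for every $Z' \subseteq \mathcal{V}\setminus\{A,Y\}$, are used throughout, together with the semi-graphoid axioms and the fact (already exploited in Lemma~\ref{lemma-d-separation-P}) that, under the pre-treatment assumption, a set $K$ disjoint from $\{A,Y\}$ is a valid adjustment set if and only if $A \indep_{\mathcal{G}'} Y \mid K$. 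For the relations (i)--(iii) in $\mathcal{G}'$: relation (i) is immediate, since $S \cup W^* \cup Z \cup U$ is valid because it is $S\cup W^*\cup Z$-irreducible (Definition~\ref{irreducible}). Writing $R = S\cup Z\cup U$, relation (i) reads $A \indep_{\mathcal{G}'} Y \mid W^* \cup R$, and the second defining relation (with $Z' = Z\cup U$) gives $W^* \indep_{\mathcal{G}'} A \mid R$, so contraction yields (ii). Writing $R' = W^*\cup (Z\setminus S)\cup U$, relation (i) reads $A \indep_{\mathcal{G}'} Y \mid \{S\}\cup R'$, the first defining relation (with $Z' = (Z\setminus S)\cup U$) gives $S \indep_{\mathcal{G}'} Y \mid R'$, and contraction (applied with $Y$ as the conditioned-on source, using symmetry of d-separation) yields (iii). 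Validity of $S\cup Z\cup U$ and of $W^*\cup (Z\setminus S)\cup U$ then follows as for (i).

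For relation (a): the hypothesis gives $W^* \indep_{\mathcal{G}'} A \mid Z\cup S$, and any $W^*$--$A$ path present in $\mathcal{G}$ but not in $\mathcal{G}'$ must use the edge $A\to Y$; since $A$ is an endpoint this edge is terminal, so $Y$ is an interior node receiving the arrowhead from $A$, and the pre-treatment assumption forces the other edge at $Y$ also to point into $Y$ (otherwise the neighbour of $Y$ toward $W^*$ would be a descendant of $A$ lying in $\mathcal{V}\setminus\{A,Y\}$), so $Y$ is a collider with no descendant in $Z\cup S$ and the path is blocked --- this is exactly the edge-deletion argument of Lemma~\ref{lemma-d-separation-P}(a). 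For relation (e): relation (iii) and the first defining relation give $A \indep_{\mathcal{G}'} Y \mid R'$ and $S \indep_{\mathcal{G}'} Y \mid R'$, so composition and weak union for d-separation give $S \indep_{\mathcal{G}'} Y \mid A\cup R'$; lifting to $\mathcal{G}$, any new $S$--$Y$ path uses $A\to Y$ as its terminal edge, and by the orientation of that edge $A$ is a non-collider on it, so, $A$ being in the conditioning set, the path is blocked.

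Relations (b), (c) and (d) are the crux and require path surgery in the style of Lemma~\ref{lemma-d-separation-P}(b). The starting point is a structural consequence of the ``for every $Z'$'' form of the defining relations: every path between $S$ and $Y$ in $\mathcal{G}'$ either contains $W^*$ or has $A$ as a collider (take $Z'$ to be the colliders of a would-be offending path, which all lie in $\mathcal{V}\setminus\{A,Y\}$ once $W^*$ is absent and $A$ is not a collider), and symmetrically every path between $W^*$ and $A$ in $\mathcal{G}'$ either contains $S$ or has $Y$ as a collider. For (d) I would adapt Lemma~\ref{lemma-d-separation-P}(b) verbatim: a path between $W^*$ and some $U_i\in U$ in $\mathcal{G}$ avoiding $\{A,S,Y\}$ can be prolonged, using a $U_i$--$A$ path witnessing $U_i\in\mathcal{W}$ (which can be taken to avoid $Y$, since $Y$ has no conditionable descendant), into an open $W^*$--$A$ path avoiding $S$, contradicting (a); hence every $W^*$--$U_i$ path meets $\{A,S,Y\}$, and when checked against the conditioning set $A\cup Z\cup S$ such a path is blocked (if $Y$ is the only one of the three present, the pre-treatment and structural facts force $Y$ to be a collider there with no conditioned descendant). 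Relations (b) and (c) are obtained by the same template in the other directions: an offending open $S$--$U_i$ path, glued to a $U_i$--$Y$ path witnessing $U_i\in\mathcal{W}$, produces an $S$--$Y$ path in $\mathcal{G}'$ contradicting the structural fact; an offending open $W^*$--$U_i$ path, glued to a $U_i$--$A$ path, produces an open $W^*$--$A$ path contradicting (a) (or the structural fact), after pruning interior visits to $W^*$ and the endpoints.

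The main obstacle is making these gluings rigorous. One must (i) choose an admissible conditioning set for the glued path --- the colliders of the extracted simple path --- and verify that none of its non-colliders is inadvertently included; (ii) control spurious interior occurrences of $A$ and $Y$ on the constructed paths, using the pre-treatment assumption to determine whether they are colliders and the non-existence of conditionable descendants of $Y$; and (iii) pass correctly between $\mathcal{G}$ and $\mathcal{G}'$ when a constructed path traverses the deleted edge $A\to Y$, truncating at $Y$ when necessary. The $K$-irreducibility of $U$ (via Proposition~\ref{prop:irreducible}, so that each $U_i$ is an extended confounding variable) enters in bounding which variables can appear on the witnessing $U_i$-paths, and is where I expect the bookkeeping to be heaviest; everything outside (b)--(d) reduces to the semi-graphoid axioms and the single edge-deletion argument reused from Lemma~\ref{lemma-d-separation-P}.
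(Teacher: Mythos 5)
Your treatment of (i)--(iii), (a) and (e) coincides with the paper's: validity of the irreducible set, two applications of contraction, and the two edge-deletion/weak-union lifts to $\mathcal{G}$ are exactly what the paper does. The gap is in (b)--(d), and it is not mere bookkeeping: the decisive subcase is the one in which your gluing construction cannot produce a contradiction with the defining d-separations. For (b), when you glue an offending $S$--$U_i$ path to a $U_i$--$Y$ path witnessing $U_i \in \mathcal{W}$, that witnessing path may contain $W^*$ as a non-collider; the resulting $S$--$Y$ path is then blocked by $W^*$, so it contradicts neither your ``structural fact'' nor $S \indep_{\mathcal{G}'} Y \mid W^* \cup X$, and ``pruning interior visits to $W^*$'' is not an available move (truncating at $W^*$ destroys the endpoint $Y$). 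The same happens in the $W^*$--$A$ direction when every witnessing $U_i$--$A$ path passes through $S$ as a non-collider. The paper resolves exactly these subcases (its cases (1b) and (2b)) with a different contradiction: if \emph{all} such witnessing paths are blocked by $S$ (resp.\ $W^*$) together with the rest of the set, then all paths from $U_i$ to $A$ (resp.\ $Y$) are blocked given $S \cup W^* \cup Z \cup U \setminus U_i$, so $U_i$ is removable and $S \cup W^* \cup Z \cup U$ is not $S \cup W^* \cup Z$-irreducible. You flag irreducibility as ``bounding which variables appear on the witnessing paths,'' but that is not how it is used; the removability argument is the missing idea, and without it the proof of (b) and (c) does not close.

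Your route to (d) also has a concrete flaw. You argue that every $W^*$--$U_i$ path meets $\{A,S,Y\}$ and is therefore blocked given $A \cup Z \cup S$; but $A$ and $S$ are \emph{in} the conditioning set, so a path on which either of them occurs as a collider is opened, not blocked, by that conditioning. Ruling such paths out directly requires a further argument you do not supply. The paper avoids the issue entirely: it first proves (c) for arbitrary conditioning subsets of $\mathcal{V} \setminus \{A,Y\}$, then obtains (d) from $W^* \indep_{\mathcal{G}'} U \mid Z \cup S$ and $W^* \indep_{\mathcal{G}'} A \mid Z \cup S$ by composition and weak union, lifting to $\mathcal{G}$ with the usual $Y$-as-collider observation. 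I would adopt that derivation for (d) and supply the irreducibility-removability argument in (b) and (c); with those repairs your outline matches the paper's proof.
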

\begin{proof}
First, we show each d-separation in $\mathcal{G}'$ separately.
\begin{enumerate}[(i)]
    \item Holds because $S \cup  W^* \cup Z \cup U$ is a valid adjustment set by Definition~\ref{irreducible}.
    \item This is implied from $A \indep_{\mathcal{G}'} Y \mid S \cup W^* \cup Z \cup U$ and $W^* \indep_{\mathcal{G}'} A \mid Z \cup S \cup U$ by the contraction property \citep{forre2017markov, dawid1979}.
    \item This is implied from $A \indep_{\mathcal{G}'} Y \mid S \cup W^* \cup Z \setminus S  \cup U$ and $S \indep_{\mathcal{G}'} Y \mid W^* \cup Z \setminus S  \cup U$ by the contraction property.
\end{enumerate}
Now we show each d-separation in $\mathcal{G}$ separately.
\begin{enumerate}[(a)]
    \item Any path between $W^*$ and $A$ in $\mathcal{G}$ that is not in $\mathcal{G}'$ contains the edge $A \rightarrow Y$ with $Y$ as a collider and is therefore blocked given any set $X \subseteq \mathcal{V} \setminus\{A, Y\}$. It follows that $W^* \indep_{\mathcal{G}} A \mid Z \cup S$ in $\mathcal{G}$.
    \item Consider a path $\pi$ between $S$ and a variable $U_i \in U$ in $\mathcal{G}'$. We now show by contradiction that $\pi$ must contain $A$ or $Y$. Assume that $A, Y \not\in \pi$. 
    
    By the definition of extended confounding variables, there exists a path $\phi=(S, \dots, W^*)$ in $\mathcal{G}'$. By the definition of suboptimal confounding variables, we know that $A, Y \not\in \phi$, because otherwise, there would exist a set $Z \subseteq \mathcal{V} \setminus \{A, Y\}$, such that $S \not\indep_{\mathcal{G}'} Y \mid W^* \cup Z$ or $W^* \not\indep_{\mathcal{G}'} A \mid Z \cup S$. It follows that there either exists a simple path $\pi_1=(S, \dots, W^*, \dots,  U_i)$ or a simple path $\pi_2 = (W^*, \dots, S, \dots, U_i)$ with $A, Y \not\in \pi_1,\pi_2$.

    Since $U_i$ is $S \cup  W^* \cup Z$-irreducible, there exists a path $\pi_a=(U_i, \dots, A)$ with $Y \not\in \pi_a$ and a path $\pi_b=(U_i, \dots, Y)$ with $A \not\in \pi_b$. By concatenating these paths to $\pi_1$ and $\pi_2$, it follows that there exists either a path $\pi_1'= (S, \dots, W^*, \dots U_i, \dots, A)$ or a path $\pi_2'=(W^*, \dots, S, \dots, U_i, \dots, Y)$.

    Consider the case (1) where $\pi_1'$ exists. Within (1), consider the case (1a) where there exists at least one path of the form $\pi_a \subseteq \pi_1'$ that does not contain $S$ as a non-collider, i.e. it either contains $S$ as a collider or not at all. In this case, we have $W^* \not\indep_{\mathcal{G}'} A \mid S \cup X_1$, where $X_1 \subseteq \mathcal{V} \setminus \{A, Y\}$ consists of all colliders on the simple path between $W^*$ and $A$ derived from $\pi_1'$. This contradicts the definition of suboptimal confounding variables. Now consider the case (1b), where all paths of the form $\pi_a \subseteq \pi_1'$ contain $S$ as a non-collider. In this case, $S \cup  W^* \cup Z \cup U \setminus U_i$ is a valid adjustment set, as all paths between $U_i$ and $A$ are blocked given $S \cup  W^* \cup Z \cup U \setminus U_i$. Then, $S \cup  W^* \cup Z \cup U$ is not $S \cup  W^* \cup Z$-irreducible, which is a contradiction.

    Now consider the case (2) where $\pi_2'$ exists. Within (2), consider the case (2a) where there exists at least one path of the form $\pi_b \subseteq \pi_2'$ that does not contain $W^*$ as a non-collider, i.e. it either contains $W^*$ as a collider or not at all. In this case, we have $S \not\indep_{\mathcal{G}'} Y \mid W^* \cup X_2$, where $X_2 \subseteq \mathcal{V} \setminus \{A, Y\}$ consists of all colliders on the simple path between $S$ and $Y$ derived from $\pi_2'$. This contradicts the definition of suboptimal confounding variables. Now consider the case (2b), where all paths of the form $\pi_b \subseteq $ contain $W^*$ as a non-collider. In this case, $S \cup  W^* \cup Z \cup U$ is not $S \cup  W^* \cup Z$-irreducible, as all paths between $U_i$ and $Y$ are blocked given $S \cup W^* \cup Z \cup U \setminus U_i$. This contradicts our definition of $U$. 

    It follows that any path between $S$ and $U_i$ must contain $A$ or $Y$, and is therefore blocked in $\mathcal{G}'$ given any subset $X \subseteq \mathcal{V} \setminus\{A, Y\}$. Any path in $\mathcal{G}$ that is not in $\mathcal{G}'$ contains the edge $A \rightarrow Y$ and is therefore also blocked given any subset $X \subseteq \mathcal{V} \setminus\{A, Y\}$, as $Y$ is a collider on the path. We conclude that $S \indep_{\mathcal{G}} U \mid X$ holds in $\mathcal{G}$ for any subset $X \subseteq \mathcal{V} \setminus\{A, Y\}$.
    \item  We set $Z = Z \setminus S$ and consider a path $\pi$ between $W^*$ and a variable $U_i \in U$ in $\mathcal{G}'$, and apply the proof of \ref{d-sep-3} without further changes, arriving at the conclusion that $W^* \indep_{\mathcal{G}} U \mid X$ for any subset $X \subseteq \mathcal{V} \setminus \{A, Y\}$.
    
    \item From the proof of \ref{d-sep-4}, we know that $W^* \indep_{\mathcal{G}} U \mid Z \cup S$ and $W^* \indep_{\mathcal{G}'} U \mid Z \cup S$. We also know that $W^* \indep_{\mathcal{G}'} A \mid Z \cup S$, which makes $W^* \indep_{\mathcal{G}'} A \cup U \mid Z \cup S$ by composition. By the weak union property \citep{forre2017markov, dawid1979}, it follows that $W^* \indep_{\mathcal{G}'} U \mid A \cup Z \cup S$ in $\mathcal{G}'$. Hence, $W^* \indep_{\mathcal{G}} U \mid A \cup Z \cup S$ also in $\mathcal{G}$, as any path between $W^*$ and $U$ in $\mathcal{G}$ that is not in $\mathcal{G}'$ contains $Y$ as a collider.
    
    \item By the definition of suboptimal confounding variables, we have $S \indep_{\mathcal{G}'} Y \mid W^* \cup Z \setminus S \cup U$ in $\mathcal{G}'$ and by \ref{d-sep-iii}, we have $A \indep_{\mathcal{G}'} Y \mid W^* \cup Z \setminus S \cup U$ in $\mathcal{G}'$. By the weak union property, it follows that $S \indep_{\mathcal{G}'} Y \mid A \cup W^* \cup Z \setminus S \cup U$. Hence, it also holds that $S \indep_{\mathcal{G}} Y \mid A \cup W^* \cup Z \setminus S \cup U$, since any path between $S$ and $Y$ containing the edge $A \rightarrow Y$ is blocked given $A \cup W^* \cup Z \setminus S \cup U$. 
\end{enumerate}
\end{proof}

\begin{lemma}[d-separation properties of irrelevant variables]
\label{lemma-d-separation-I}
Let $I \subseteq\mathcal{I}$ be a set of irrelevant variables in the directed acyclic graph $\mathcal{G} = (\mathcal{V}, \mathcal{E})$ with $A, Y \in \mathcal{V}$ for estimating the causal effect $\tau$ of $A$ on $Y$. Let $Z \subseteq \mathcal{V} \setminus \{A, Y \}$ and let $U \subseteq \mathcal{W}$ be a set of extended confounding variables, such that $Z \cup I \cup U$ is $Z \cup I$-irreducible for estimating $\tau$. The following d-separations hold in $\mathcal{G}'= \mathcal{G} \setminus (A \rightarrow Y)$: 
\begin{enumerate}[(i)]
    \item \label{I-d-sep-i}$A \indep_{\mathcal{G}'} Y \mid  Z \cup I \cup U$,
    \item \label{I-d-sep-ii}$A \indep_{\mathcal{G}'} Y \mid  Z  \cup U$.

\end{enumerate}
  Hence, the sets $Z \cup I \cup U$ and $Z \cup U$ are both valid adjustment sets. The following d-separations hold in the original graph $\mathcal{G}$:
    \begin{enumerate}[(a)]
        \item \label{d-sep-I-1} $I \indep_{\mathcal{G}} Y \mid A \cup Z \cup U$,
        \item \label{d-sep-I-2} $I \indep_{\mathcal{G}} U \mid Z$.
    \end{enumerate}
\end{lemma}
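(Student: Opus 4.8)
The plan is to follow the template of the proof of Lemma~\ref{lemma-d-separation-P}, with the irrelevant set $I$ playing the role of the precision set $P$, but exploiting the stronger defining property of irrelevant variables: each $I_i\in I$ is d-separated from $Y$ in $\mathcal{G}'$ given \emph{every} subset of $\mathcal{V}\setminus\{A,Y\}$. Throughout I would use, as the paper does elsewhere, the specialization of the adjustment criterion valid in this pre-treatment, mediator-free setting: a set $K\subseteq\mathcal{V}\setminus\{A,Y\}$ is a valid adjustment set if and only if $A\indep_{\mathcal{G}'}Y\mid K$. I would also record that $Y$ is a sink in $\mathcal{G}$ (a child of $Y$ would be a descendant of $A$, excluded by the pre-treatment assumption), so that $A$ has $Y$ as its only child and becomes a sink once the edge $A\to Y$ is deleted; consequently $\mathrm{De}(Y,\mathcal{G})=\emptyset$, $\mathrm{De}(A,\mathcal{G})=\{Y\}$, and $A$ has no descendants in $\mathcal{G}'$.

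The separation \ref{I-d-sep-i} is immediate: $Z\cup I\cup U$ is a valid adjustment set by Definition~\ref{irreducible}, hence $A\indep_{\mathcal{G}'}Y\mid Z\cup I\cup U$. For \ref{I-d-sep-ii} I would \emph{not} try to separate $I$ from $A$ (irrelevant variables need not be d-separated from $A$, as instrumental variables show); instead I would combine \ref{I-d-sep-i}, which by symmetry reads $Y\indep_{\mathcal{G}'}A\mid Z\cup U\cup I$, with the defining property $Y\indep_{\mathcal{G}'}I\mid Z\cup U$ via the contraction property of d-separation, obtaining $Y\indep_{\mathcal{G}'}A\cup I\mid Z\cup U$. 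Decomposition then yields \ref{I-d-sep-ii}, and that $Z\cup U$ and $Z\cup I\cup U$ are valid adjustment sets follows from the equivalence above.

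Part \ref{d-sep-I-1} drops out of the same intermediate statement: weak union applied to $Y\indep_{\mathcal{G}'}A\cup I\mid Z\cup U$ gives $I\indep_{\mathcal{G}'}Y\mid A\cup Z\cup U$. To pass from $\mathcal{G}'$ to $\mathcal{G}$ I would observe that any $I_i$--$Y$ path present in $\mathcal{G}$ but not in $\mathcal{G}'$ uses the edge $A\to Y$, hence contains $A$ as a non-collider (a chain $\cdot\to A\to Y$, since the other edge at $A$ on the path must be incoming) with $A$ in the conditioning set, so it is blocked; therefore $I\indep_{\mathcal{G}}Y\mid A\cup Z\cup U$.

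The real work is part \ref{d-sep-I-2}, $I\indep_{\mathcal{G}}U\mid Z$, which I expect to be the main obstacle; it mirrors the concatenation argument in the proof of Lemma~\ref{lemma-d-separation-P}(b). I would argue directly in $\mathcal{G}$, fixing $I_i\in I$, $U_j\in U$ and a path $\pi$ between them, and split on whether $\pi$ meets $\{A,Y\}$. If $\pi$ passes through $Y$, then $Y$ is an interior collider of $\pi$ with $\mathrm{De}(Y,\mathcal{G})=\emptyset$, so $\pi$ is blocked given $Z$; if $\pi$ passes through $A$ but not $Y$, then $\pi$ does not use $A\to Y$, so $A$ is an interior collider of $\pi$ with $\mathrm{De}(A,\mathcal{G})=\{Y\}$, again disjoint from $Z$, so $\pi$ is blocked. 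The remaining case is $\pi\cap\{A,Y\}=\emptyset$, in which case $\pi$ is a path of $\mathcal{G}'$, and I claim no such path exists. Suppose it did. Since $U_j\in\mathcal{W}$ there is $L\subseteq\mathcal{V}\setminus\{A,Y\}$ with $U_j\not\indep_{\mathcal{G}'}Y\mid L$, hence an active $U_j$--$Y$ path $\rho$ in $\mathcal{G}'$; such a $\rho$ cannot meet $A$, since $A$ would then be an interior collider of $\rho$ with no descendant in $L$. Concatenating $\pi$ with $\rho$ yields a walk from $I_i$ to $Y$ in $\mathcal{G}'$ that avoids $A$; pruning it to a simple path $\pi^\ast$, every collider of $\pi^\ast$ lies in $\mathcal{V}\setminus\{A,Y\}$, so conditioning on exactly the collider set of $\pi^\ast$ makes $\pi^\ast$ active, contradicting $I_i\indep_{\mathcal{G}'}Y\mid(\text{colliders of }\pi^\ast)$, which holds because $I_i$ is irrelevant. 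Hence every $I_i$--$U_j$ path in $\mathcal{G}$ is blocked given $Z$, which is \ref{d-sep-I-2}. Two steps here are genuinely delicate --- pruning the walk down to a simple path, and checking that conditioning on precisely its colliders (and nothing else) activates it --- and these are exactly the maneuvers used in Lemma~\ref{lemma-d-separation-P}(b), so I would reuse that reasoning rather than reprove it.
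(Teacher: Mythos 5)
Your proposal is correct and follows essentially the same route as the paper: (i) from irreducibility, (ii) and (a) via contraction/weak union plus the observation that paths using the edge $A \rightarrow Y$ contain $A$ as a non-collider, and (b) by contradiction, concatenating a putative $I_i$--$U_j$ path avoiding $A$ with a $U_j$--$Y$ path in $\mathcal{G}'$ to d-connect $I_i$ and $Y$ and contradict irrelevance. The only real deviation is in (b), where you condition on exactly the colliders of the pruned simple path (so you never need the $Z \cup I$-irreducibility of $Z \cup I \cup U$, only that $U_j \in \mathcal{W}$), whereas the paper uses irreducibility to select an extension without $Z$-non-colliders and conditions on the colliders together with $Z$ --- a mild simplification that leaves the structure of the argument unchanged.
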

\begin{proof}
First, we show each d-separation in $\mathcal{G}'$ separately.
\begin{enumerate}[(i)]
    \item Holds because $Z \cup I \cup U$ is a valid adjustment set.
    \item This is implied from \ref{I-d-sep-i} $A \indep_{\mathcal{G}'} Y \mid  Z \cup I \cup U$ and $I \indep_{\mathcal{G}'} Y \mid Z \cup U$, which holds by the definition of irrelevant variables, by the contraction property \citep{forre2017markov, dawid1979}.
\end{enumerate}
Now we show each d-separation in $\mathcal{G}$ separately.
\begin{enumerate}[(a)]
    \item By \ref{I-d-sep-ii}, it holds that $A \indep_{\mathcal{G}'} Y \mid  Z  \cup U$. By the definition of irrelevant variables, it also holds that $I \indep_{\mathcal{G}'} Y \mid Z \cup U$. It follows that $A \cup I \indep_{\mathcal{G}'} Y \mid Z \cup U$ by the composition property \citep{forre2017markov, dawid1979} and $I \indep_{\mathcal{G}'} Y \mid A \cup Z \cup U$ by the weak union property \citep{forre2017markov, dawid1979}. Any path $\pi$ between a variable $I_i \in I$ and $Y$ in $\mathcal{G}$ that is not in $\mathcal{G}'$ must contain the edge $A \rightarrow Y$. Since $A$ has only incoming edges apart from its edge to $Y$, it follows that $\pi$ is blocked given $A \cup Z \cup U$. It follows that $I \indep_{\mathcal{G}} Y \mid A \cup Z \cup U$.
    \item First, we show by contradiction that any path in $\mathcal{G}'$ between any variable $I_i \in I$ and any variable $U_i \in U$ contains $A$. Assume that there exists a path $\pi$ between a variable $I_i \in I$ and a variable $U_i \in U$ with $A \not\in \pi$. Then, there also exists a path $\pi'=(I_i, \dots, U_i, \dots, Y)$ with $A \not\in \pi'$, because $U_i$ is an extended confounding variable. Furthermore, there exists a path of the form $\pi'$ with $A \not\in \pi'$, such that $\pi'$ does not contain any $Z_i \in Z$ as a non-collider, because otherwise, $Z \cup I \cup U\setminus U_i$ would be a valid adjustment set, and $Z \cup I \cup U$ would not be $Z \cup I$-irreducible. It follows that $I_i \not\indep_{\mathcal{G}'} Y \mid X \cup Z$, where $X \subseteq \mathcal{V} \setminus \{A, Y\}$ consists of all colliders on the simple path derived from $\pi'$. This contradicts the definition of irrelevant variables. It follows that all paths between a variable $I_i \in I$ and a variable $U_i \in U$ must contain $A$ and are therefore blocked in $\mathcal{G}'$. Any path $\phi$ between a variable $I_i \in I$ and a variable $U_i \in U$ in $\mathcal{G}$ that is not in $\mathcal{G}'$ must contain the edge $A \rightarrow Y$. It follows that $\phi$ is blocked since $Y \in\phi$, which is a collider on $\phi$. Hence, we have $I \indep_{\mathcal{G}} U \mid Z$.
\end{enumerate}
\end{proof}

\subsection*{Condition for inclusion of precision variables}
\label{A:cor:precision}
\begin{corollary}[Condition for Inclusion of Precision Variables] \label{cor:precision}Let $K$ be an adjustment set for estimating the causal effect $\tau$ and $P\subseteq \mathcal{P}$ be a set of precision variables, where $P \cap K = \emptyset$. We assume $|K \cup P| < n-3$. A necessary and sufficient condition to ensure a lower expected mean squared error of the ordinary least squares estimator $\hat{\tau}_{K \cup P}$ compared to $\hat{\tau}_{K}$, is
\begin{equation}
    \frac{|P|}{n-|K|-3} < 1 - \frac{\sigma_{yy.akp}}{\sigma_{yy.ak}}.
\end{equation}
\end{corollary}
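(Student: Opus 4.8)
The plan is to specialize the Sample Size Criterion (Theorem~\ref{sample-size-criterion}) to the pair $K$ and $L = K \cup P$ and then simplify using the d-separation properties of precision variables. The first step is to observe that since $P$ consists of precision variables, each $P_i$ is d-separated from $A$ given $K$ in $\mathcal{G}'$, hence (by the argument in part~(a) of Lemma~\ref{lemma-d-separation-P}, extended to the set $P$) $P \indep_{\mathcal{G}} A \mid K$ in $\mathcal{G}$ as well. By Lemma~\ref{lemma:equal-coeffs} with the roles $X = A$, $T = P$, $S = K$, this gives $\beta_{ya.k} = \beta_{ya.kp}$, and moreover the bias of the OLS estimator only depends on this coefficient (it equals $\beta_{ya.k} - \tau$ under the pre-treatment assumption, by the discussion in the ``Estimating the average treatment effect'' subsection). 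Therefore $B(\hat\tau_K) = B(\hat\tau_{K \cup P})$, i.e.\ the squared biases are equal.

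The second step handles the variance. Since $P \indep_{\mathcal{G}} A \mid K$, conditioning further on $A$ (via the contraction/weak-union argument, or directly since $A$ has no descendants among the covariates) we also get $P \indep A \mid A \cup K$ trivially, and more to the point $\sigma_{aa.kp} = \sigma_{aa.k}$ because $A$ is uncorrelated with $P$ given $K$. Thus $\mathrm{aVar}(\hat\tau_{K \cup P}) = \sigma_{yy.akp}/\sigma_{aa.kp} = \sigma_{yy.akp}/\sigma_{aa.k}$, whereas $\mathrm{aVar}(\hat\tau_K) = \sigma_{yy.ak}/\sigma_{aa.k}$. Hence the ratio $\mathrm{aVar}(\hat\tau_{K\cup P})/\mathrm{aVar}(\hat\tau_K) = \sigma_{yy.akp}/\sigma_{yy.ak} \le 1$, the inequality being part~(b) of Corollary~\ref{cor:ci-variance-separated} (or Lemma~\ref{lemma:ci-variance}) applied with the separation $P \indep A \mid K$ — conditioning on more variables cannot increase the conditional variance of $Y$. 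This is exactly the quantity appearing on the right-hand side of the claimed inequality.

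The third step is to feed these two facts into the MSE difference. Starting from the rewritten form of the criterion, Equation~\eqref{eq:ssc-rewritten} in Appendix~2, with $K$ and $L$ swapped so that we compare $\mathrm{MSE}(\hat\tau_{K \cup P})$ against $\mathrm{MSE}(\hat\tau_K)$: we have $\mathrm{MSE}(\hat\tau_{K\cup P}) < \mathrm{MSE}(\hat\tau_K)$ if and only if
\begin{align*}
    B^2(\hat\tau_{K\cup P}) - B^2(\hat\tau_K)
    <
    \frac{\mathrm{aVar}(\hat\tau_K)}{n - |K| - 3}
    -
    \frac{\mathrm{aVar}(\hat\tau_{K\cup P})}{n - |K\cup P| - 3}.
\end{align*}
The left-hand side vanishes by Step~1, and $|K \cup P| = |K| + |P|$ since $P \cap K = \emptyset$. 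Substituting $\mathrm{aVar}(\hat\tau_{K\cup P}) = (\sigma_{yy.akp}/\sigma_{yy.ak})\,\mathrm{aVar}(\hat\tau_K)$ from Step~2, dividing through by $\mathrm{aVar}(\hat\tau_K) > 0$, and multiplying out the denominators $n - |K| - 3$ and $n - |K| - |P| - 3$ (both positive by the assumption $|K \cup P| < n - 3$) yields, after elementary algebra,
\begin{align*}
    \frac{|P|}{n - |K| - 3} < 1 - \frac{\sigma_{yy.akp}}{\sigma_{yy.ak}},
\end{align*}
which is the stated condition. Since every step is an equivalence, the condition is both necessary and sufficient.

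I do not expect a genuine obstacle here; the statement is essentially a corollary of Theorem~\ref{sample-size-criterion} once the bias-invariance is established. The only point requiring a little care is making the bias argument rigorous for a \emph{set} $P$ rather than a single precision variable — but this is immediate because d-separation $P \indep_{\mathcal{G}'} A \mid K$ for the whole set follows from each $P_i$ being a precision variable together with the observation that $A$ has no descendants among the covariates, so no collider on a $P$–$A$ path can be opened by conditioning on other elements of $P$; alternatively one applies Lemma~\ref{lemma:equal-coeffs} iteratively, peeling off one element of $P$ at a time. The removal of the $A \to Y$ edge when passing between $\mathcal{G}$ and $\mathcal{G}'$ is handled exactly as in Lemma~\ref{lemma-d-separation-P}, part~(a): any extra path in $\mathcal{G}$ uses $A \to Y$ and hence has $Y$ as a collider, so it is blocked.
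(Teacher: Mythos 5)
Your proof is correct, and its overall skeleton matches the paper's: both specialize the rewritten MSE comparison~\eqref{eq:ssc-rewritten} to the pair $K$ versus $K \cup P$, both use $P \indep_{\mathcal{G}} A \mid K$ to get $\sigma_{aa.kp} = \sigma_{aa.k}$ via Lemma~\ref{lemma:equal-coeffs}, and both finish with the same elementary algebra (noting every step is an equivalence, so necessity and sufficiency come for free). Where you genuinely diverge is the bias-invariance step. You observe that in the linear Gaussian setting $E(\hat\tau_K)$ is exactly the population regression coefficient $\beta_{ya.k}$, so $B(\hat\tau_{K\cup P}) = B(\hat\tau_K)$ follows in one line from Lemma~\ref{lemma:equal-coeffs} with $T = P$, $X = A$, $S = K$; the paper instead proves a standalone bias-invariance result (Lemma~\ref{lemma:bias}) by completing $K \cup P$ to a $K\cup P$-irreducible valid adjustment set $K \cup P \cup U$, using the d-separation $U \indep_{\mathcal{G}} P \mid K \cup A$ from Lemma~\ref{lemma-d-separation-P} and an integral decomposition in the style of Pearl's bias-amplification calculation. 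Your route is shorter and entirely sufficient for this corollary; the paper's route buys a reusable lemma whose structure (the irreducible completion $U$ and the associated d-separations) is needed again in the proofs of Lemmas~\ref{lemma:precision} and~\ref{lemma:confounding}, which is presumably why it is proved in that more structural form. One trivial nit: your invocation of Corollary~\ref{cor:ci-variance-separated}(b) ``with the separation $P \indep A \mid K$'' to get $\sigma_{yy.akp} \le \sigma_{yy.ak}$ is not quite the right hypothesis for that part (it asks for a separation from $Y$; the inequality really just follows from the fact that adding regressors cannot increase the Gaussian residual variance, e.g.\ taking $T$ empty), but this inequality is only a sanity check on the sign of the right-hand side and plays no role in the equivalence, so nothing breaks.
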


\begin{proof}
First, in Lemma~\ref{lemma:bias}, we show that the bias of the ordinary least squares estimator $\hat \tau_{K}$ is invariant to the addition of a precision variable to the adjustment set $K$. While previous works \citep{pearl2009causality,hernan2024causal} have established this property for valid adjustment sets, we extend the result to cases where $K$ may not satisfy the criteria for validity. This demonstrates that the inclusion of variables independent of $A$ given $K$ does not affect the bias of the estimator, regardless of the validity of the remaining adjustment set.
\begin{lemma}[Bias invariance]
    \label{lemma:bias}  
Let $\mathcal{V}$ be the set of variables in a directed acyclic graph $\mathcal{G}=(\mathcal{V}, \mathcal{E})$ describing the causal relations of $\mathcal{V}$, with $A, Y \in \mathcal{V}$. Let $K \subseteq \mathcal{V} \setminus \{A, Y\}$ be a set of variables and let $P$ be a precision variable in $\mathcal{G}$. The bias of $\hat \tau_{K}$ is invariant to the addition of $P$:
\begin{equation*}
   B(\hat \tau_{K}) = B(\hat \tau_{K \cup P})
\end{equation*}
\end{lemma}
\begin{proof}
    We base the proof on a more general version of the calculations in \cite{pearl2010onAClass}. We need to compare the following quantities, where $k$ is a vector of values of all variables $V_i \in K$, and $a$ and $p$ denote the values of $A$ and $P$ respectively:
    \begin{align}
        E(\hat \tau_{K}) &= \frac{\partial}{\partial a}E(Y \mid a, k)
        \label{eq:expected-ate-1}, \\
        E(\hat \tau_{K \cup P}) &= \frac{\partial}{\partial a}E(Y \mid a, k, p). 
        \label{eq:expected-ate-2}
    \end{align}
    For the bias to be invariant, it needs to hold that:
    \begin{align}
        E(\hat \tau_{K}) &= E(\hat \tau_{K \cup P}).
        \label{eq:equal-estimates}
    \end{align}
    Let $U$ be a potentially empty set of covariates, such that $K \cup P \cup U$ is a $K \cup P$-irreducible adjustment set. Such a set $U$ exists for all $K \subseteq \mathcal{V} \setminus \{A, Y\}$ by Proposition~\ref{prop:irreducible}. By Lemma~\ref{lemma-d-separation-P} (ii), $K \cup U$ is also a valid adjustment set. Using Equation~\eqref{eq:expected-ate-1} and \eqref{eq:expected-ate-2}, it follows that
    \begin{align*}
        E(\hat \tau_{K \cup P}) - E(\hat \tau_{K}) &= \frac{\partial}{\partial a} \left\{ \int_u E(Y \mid a, k, p, u) \mathrm{pr}(u \mid a,k, p) du - \int_u E(Y \mid a, k, u) \mathrm{pr}(u \mid a,k) du  \right\}.
    \end{align*}
    By Lemma~\ref{lemma-d-separation-P}, we have $U \indep_{\mathcal{G}} P \mid K \cup A$. Therefore, $\mathrm{pr}(u \mid a,k,p)=\mathrm{pr}(u \mid a,k)$ and    
    \begin{align*}
        E(\hat \tau_{K \cup P}) - E(\hat \tau_{K}) &= \frac{\partial}{\partial a} \left[ \int_u \left\{ E(Y \mid a, k, p, u) - E(Y \mid a, k, u)\right\} \mathrm{pr}(u \mid a,k) du  \right].
    \end{align*}
    By linearity of $Y$, this equals
    \begin{align*}
        E(\hat \tau_{K \cup P}) - E(\hat \tau_{K}) &= \frac{\partial}{\partial a} \left\{ \int_u \beta_{yp.aku}p \mathrm{pr}(u \mid a,k) du  \right\},
    \end{align*}
    where $\beta_{yp.aku}$ is the coefficient vector describing the effects of $P$ on $Y$ from regressing $Y$ on $P$, $A$, $K$ and $U$. It follows:
    \begin{align*}
        E(\hat \tau_{K \cup P}) - E(\hat \tau_{K}) &= \frac{\partial}{\partial a} \beta_{yp.aku}p = 0,
    \end{align*}
    and hence Equation~\eqref{eq:equal-estimates} holds.
    \end{proof}
\label{app:proof-precision}
    We proceed with the proof of Corollary~\ref{cor:precision}. Using Lemma~\ref{lemma:bias} and Equation~\eqref{eq:ssc-rewritten} from the proof of Theorem~1, it is sufficient to show the following:

   \begin{align*}
    0 &< \frac{1}{n - |K | - 3} \frac{\sigma_{yy.ak}}{\sigma_{aa.k}}  - \frac{1}{n - |K  \cup P| - 3}\frac{\sigma_{yy.akp}}{\sigma_{aa.kp}} %\\ 
        %\frac{1}{n - |K \cup P| - 3} \otv{K \cup P} &< \frac{1}{n - |K| - 3} \otv{K}  
    \end{align*}
    By Lemma~\ref{lemma-d-separation-P}, we have $A \indep_{\mathcal{G}} P \mid K$. By applying Lemma~\ref{lemma:equal-coeffs} \citep{henckelGraphicalCriteriaEfficient2022a} in conjunction with the causal Markov property with $T=P$, $Y=A$, $X=K$ and $S=\emptyset$, we have $\sigma_{aa.k} = \sigma_{aa.kp}$, such that it remains to show that
    \begin{align*}
        \frac{\sigma_{yy.akp}}{n - |K \cup P| - 3}  &< \frac{\sigma_{yy.akp}}{n - |K| - 3}.
    \end{align*}
    This can be rewritten as follows:
    \begin{align}
        1 - \frac{\sigma_{yy.akp}}{\sigma_{yy.ak}}  &>  \frac{|P|}{n - |K| - 3}.
        \label{eq:precision-proof-statement}
    \end{align}
    Equation~\eqref{eq:precision-proof-statement} holds by the criterion for inclusion of precision variables. 
\end{proof}

\subsection*{Proof of Theorem 2 (MSE-optimal adjustment set candidates)}
For the proof of the first graphical criterion, we will consider precision variables, extended confounding variables and all remaining pre-treatment variables separately.

\begin{lemma}[Exclusion of suboptimal precision variables] 
\label{lemma:precision}
Any variable that is a suboptimal precision variable $S \in \mathcal{S}^P$ does not need to be considered for the MSE-optimal adjustment set in the sense that for any adjustment set $K \cup S \subseteq \mathcal{V} \setminus \{A, Y\}$, there exists a non-suboptimal precision variable $P^*$, such that the adjustment set $K \setminus S \cup P^*$ yields a lower or equal mean squared error. Hence, there exists an MSE-optimal adjustment set $O^*_n$, such that
\begin{equation}
    O^*_n \cap \mathcal{S}^P = \emptyset.
\end{equation}
\end{lemma}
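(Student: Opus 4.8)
The plan is, for a fixed adjustment set $K$ with $S\in K$ (write $\tilde K:=K\setminus S$), to exhibit a \emph{non-suboptimal} precision variable $P^*$ for which $\mathrm{MSE}(\hat\tau_{\tilde K\cup P^*})\le\mathrm{MSE}(\hat\tau_{\tilde K\cup S})=\mathrm{MSE}(\hat\tau_K)$, and then to obtain the displayed statement by iterating the replacement. By Definition~\ref{def:suboptimal-precision} there is a precision variable $P^*$ witnessing the suboptimality of $S$, i.e.\ $S\indep_{\mathcal{G}'}Y\mid P^*\cup Z$ for every $Z\subseteq\mathcal{V}\setminus\{A,Y\}$. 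If this $P^*$ is itself suboptimal, with witness $P^{**}$, I would apply contraction and then decomposition of d-separation to $P^*\indep_{\mathcal{G}'}Y\mid P^{**}\cup Z$ and $S\indep_{\mathcal{G}'}Y\mid P^*\cup P^{**}\cup Z$ to conclude that $P^{**}$ is again a witness for $S$; since $\mathcal{P}$ is finite, this lets me assume from the start that $P^*$ is non-suboptimal. Verifying that this reduction is well-founded — intuitively, no variable with a direct edge to $Y$ can be anyone's witness, which precludes cycles — is a small but genuine point.

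Since both $S$ and $P^*$ are precision variables, Lemma~\ref{lemma:bias} gives $B(\hat\tau_{\tilde K\cup S})=B(\hat\tau_{\tilde K})=B(\hat\tau_{\tilde K\cup P^*})$, so the two estimators have equal bias. By the MSE decomposition together with the reformulation~\eqref{eq:ssc-rewritten} of Theorem~\ref{sample-size-criterion}, it then suffices to compare $\mathrm{aVar}(\hat\tau_{\tilde K\cup P^*})/(n-|\tilde K\cup P^*|-3)$ with $\mathrm{aVar}(\hat\tau_{\tilde K\cup S})/(n-|\tilde K\cup S|-3)$, recalling $\mathrm{aVar}(\hat\tau_L)=\sigma_{yy.al}/\sigma_{aa.l}$ and $|\tilde K\cup S|=|\tilde K|+1$.

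The analytic core is the d-separation $S\indep_{\mathcal{G}}Y\mid A\cup\tilde K\cup P^*$. I would get it from Lemma~\ref{lemma-d-separation-P}: part~(c) gives $S\indep_{\mathcal{G}}Y\mid\tilde K\cup P^*$, part~(a) gives $S\indep_{\mathcal{G}}A\mid\tilde K\cup P^*$, and composition followed by weak union yields the claim; the Markov property transfers it to the distribution. (Routing through $S\indep A$ is needed precisely because d-separation is not monotone, so one cannot merely enlarge the conditioning set of part~(c) by $A$.) Feeding this into Corollary~\ref{cor:ci-variance-separated}, part~(b) — with $Y$ the outcome, $S$ in the role of the discarded variable, $P^*$ in the role of the retained one, and $A\cup\tilde K$ fixed — gives $\sigma_{yy.a\tilde k p^*}\le\sigma_{yy.a\tilde k s}$. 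For the denominators, $A\indep_{\mathcal{G}}S\mid\tilde K$ and $A\indep_{\mathcal{G}}P^*\mid\tilde K$ (precision variables, Lemma~\ref{lemma-d-separation-P}(a)) combined with Lemma~\ref{lemma:equal-cov} give $\sigma_{aa.\tilde k s}=\sigma_{aa.\tilde k}=\sigma_{aa.\tilde k p^*}$, hence $\mathrm{aVar}(\hat\tau_{\tilde K\cup P^*})\le\mathrm{aVar}(\hat\tau_{\tilde K\cup S})$. If $P^*\notin\tilde K$ the two candidate sets have equal size and we are done; if $P^*\in\tilde K$ then $\tilde K\cup P^*=\tilde K$ is smaller by one, the key d-separation collapses to $S\indep_{\mathcal{G}}Y\mid A\cup\tilde K$, so $\sigma_{yy.a\tilde k s}=\sigma_{yy.a\tilde k}$ by Lemma~\ref{lemma:equal-cov}, and the inclusion criterion of Corollary~\ref{cor:precision} is then violated — i.e.\ dropping $S$ does not raise the MSE — so the conclusion holds in this degenerate case as well.

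Combining the last three steps gives $\mathrm{MSE}(\hat\tau_{\tilde K\cup P^*})\le\mathrm{MSE}(\hat\tau_K)$. For the global assertion I would start from any MSE-optimal adjustment set and repeatedly replace each suboptimal precision variable it contains by the corresponding non-suboptimal witness: each such step leaves the MSE no larger while strictly reducing the count of suboptimal precision variables in the set, so after finitely many steps we reach an MSE-optimal $O^*_n$ with $O^*_n\cap\mathcal{S}^P=\emptyset$. I expect the main obstacle to be the bookkeeping of the first paragraph — justifying that the witness may be taken non-suboptimal without circular reasoning — rather than the variance estimate, which reduces to a single application of the conditional-variance inequality of \citet{henckelGraphicalCriteriaEfficient2022a} once the d-separation of the third paragraph is in hand.
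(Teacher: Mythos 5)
Your proposal is correct and follows essentially the same route as the paper's proof: replace $S$ by its witnessing precision variable $P^*$, use bias invariance (Lemma~\ref{lemma:bias}) to equate the biases, use the conditional-variance inequality of \citet{henckelGraphicalCriteriaEfficient2022a} together with the d-separations of Lemma~\ref{lemma-d-separation-P}(a) and (c) to bound the asymptotic variance, note the set size does not increase, and iterate; your added care (explicitly deriving $S \indep_{\mathcal{G}} Y \mid A \cup \tilde K \cup P^*$ via composition and weak union, handling $P^* \in \tilde K$, and arguing the witness may be taken non-suboptimal) only tightens points the paper leaves implicit. One parenthetical intuition is off — a variable with a direct edge to $Y$ certainly can be a witness (e.g.\ $O_2$ witnesses $S_2$ and $S_3$ in $\mathcal{G}_3$) — but your actual mechanism (witness-of-a-witness is a witness, plus finiteness and the intersection property to exclude cycles) does not rely on it.
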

\begin{proof}
    Consider a suboptimal precision variable $S \in \mathcal{S}^P$ and any adjustment set $K \subseteq \mathcal{V} \setminus \{A, Y\}$. By the definition of suboptimal precision variables, there exists another precision variable $P^* \in \mathcal{P}$, such that all paths from $S$ to $Y$ in $\mathcal{G}'$ are blocked given $P^*$ and $K$. The estimator $\hat \tau_{K \cup S}$ using the adjustment set $K \cup S$ has variance
    \begin{align*}
        \text{var}(\hat \tau_{K \cup S}) &= \frac{1}{n - |K \cup S| - 3}\frac{\sigma_{yy.aks}}{\sigma_{aa.ks}}.
    \end{align*}The estimator $\hat \tau_{K \setminus S \cup P^*}$ using the adjustment set $K \setminus S \cup P^*$ has variance
    \begin{align}
        \text{var}(\hat \tau_{K \setminus S \cup P^*}) &= \frac{1}{n - |K \setminus S \cup P^*| - 3}\frac{\sigma_{yy.ak\setminus s p^*}}{\sigma_{aa.k\setminus s p^*}} \nonumber \\
        &= \frac{1}{n - (|K \cup S \cup P^*| - 1) - 3}\frac{\sigma_{yy.ak\setminus s p^*}}{\sigma_{aa.k\setminus s p^*}} \nonumber \\
        &\leq \frac{1}{n - |K \cup S| - 3}\frac{\sigma_{yy.ak\setminus s p^*}}{\sigma_{aa.k\setminus s p^*}}.
        \label{eq:var-precision}
    \end{align}
    By Lemma~\ref{lemma-d-separation-P} (c), we have $S \indep_{\mathcal{G}} Y \mid K \setminus S \cup P^*$ in $\mathcal{G}$. By Lemma~\ref{lemma-d-separation-P} (a), we also have $P^* \indep_{\mathcal{G}} A \mid K \cup S$ in $\mathcal{G}$. By applying Lemma~\ref{lemma:ci-variance} \citep{henckelGraphicalCriteriaEfficient2022a} with $X=A$, $T = S$, $S = P^*$ and $W = K \setminus S$ in conjunction with the causal Markov property, it follows that
    \begin{align}
    \label{eq:a17}
        \frac{\sigma_{yy.ak\setminus s p^*}}{\sigma_{aa.k\setminus s p^*}} \leq \frac{\sigma_{yy.aks}}{\sigma_{aa.ks}}, 
    \end{align}
    and hence
        $\text{var}(\hat \tau_{K \setminus S \cup P^*}) \leq  \text{var}(\hat \tau_{K \cup S})$. Using Lemma~\ref{lemma:bias}, the bias is unaffected by both $S$ and $P^*$, i.e.
        $B^2(\hat \tau_{K \setminus S\cup P^*}) = B^2(\hat \tau_{K \cup S})$, and therefore $\text{MSE}(\hat \tau_{K \setminus S \cup P^*}) \leq  \text{MSE}(\hat \tau_{K \cup S})$. 
\end{proof}

\begin{lemma}[Exclusion of suboptimal confounding variables] 
\label{lemma:confounding}Any variable that is a suboptimal confounding variable $S \in \mathcal{S}^W$ does not need to be considered for the MSE-optimal adjustment set in the sense that for any adjustment set $K \cup S \subseteq \mathcal{V} \setminus \{A, Y\}$, there exists a non-suboptimal extended confounding variable $W^*$, such that the adjustment set $K \setminus S \cup W^*$ yields a lower or equal mean squared error.  Hence, there exists an MSE-optimal adjustment set $O^*_n$, such that
\begin{equation}
    O^*_n \cap \mathcal{S}^W = \emptyset.
\end{equation}
\end{lemma}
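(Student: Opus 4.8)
The plan is to follow the template used for Lemma~\ref{lemma:precision}. Fix a suboptimal confounding variable $S\in\mathcal{S}^{W}$ and any adjustment set $K\subseteq\mathcal{V}\setminus\{A,Y\}$, and let $W^{*}\in\mathcal{W}$ be an extended confounding variable witnessing the suboptimality of $S$ as in Definition~\ref{def:suboptimal-confounding}; as argued for Lemma~\ref{lemma:precision}, we may take $W^{*}$ to be non-suboptimal (otherwise follow the witnessing relation until a non-suboptimal variable is reached). I will show $\mathrm{MSE}(\hat{\tau}_{K\setminus S\cup W^{*}})\le\mathrm{MSE}(\hat{\tau}_{K\cup S})$ by splitting the difference into a bias part and a variance part and showing each is nonnegative. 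Combined with the fact that replacing $S$ by $W^{*}$ strictly decreases the number of elements of $\mathcal{S}^{W}$ in the set, finitely many such replacements applied to any MSE-optimal adjustment set then yield an MSE-optimal adjustment set $O^{*}_{n}$ with $O^{*}_{n}\cap\mathcal{S}^{W}=\emptyset$.

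For the variance part, recall $\mathrm{var}(\hat{\tau}_{K})=\mathrm{aVar}(\hat{\tau}_{K})/(n-|K|-3)$ and note $|K\setminus S\cup W^{*}|\le|K\cup S|$, so it suffices to prove $\sigma_{yy.a(k\setminus s)w^{*}}/\sigma_{aa.(k\setminus s)w^{*}}\le\sigma_{yy.aks}/\sigma_{aa.ks}$. The two d-separations I use are items (a) and (e) of Lemma~\ref{lemma-d-separation-W}, the latter taken with $Z=K\setminus S$ and with $U$ a (possibly empty) set of extended confounding variables completing $S\cup W^{*}\cup(K\setminus S)$ to an irreducible valid adjustment set, which exists by Proposition~\ref{prop:irreducible}: these give $W^{*}\indep_{\mathcal{G}}A\mid(K\setminus S)\cup S$ and $S\indep_{\mathcal{G}}Y\mid A\cup(K\setminus S)\cup W^{*}\cup U$. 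Feeding the first into part (a) of Corollary~\ref{cor:ci-variance-separated} (mapping the corollary's $(X,Y,T,S,W)$ to $(A,Y,S,W^{*},K\setminus S)$ and using the causal Markov property) gives $\sigma_{aa.(k\setminus s)s}\le\sigma_{aa.(k\setminus s)w^{*}}$, and feeding the second into part (b) of the same corollary gives $\sigma_{yy.a(k\setminus s)w^{*}}\le\sigma_{yy.a(k\setminus s)s}$; dividing yields the required inequality and hence $\mathrm{var}(\hat{\tau}_{K\setminus S\cup W^{*}})\le\mathrm{var}(\hat{\tau}_{K\cup S})$.

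For the bias part, I claim it is in fact unchanged, $B(\hat{\tau}_{K\setminus S\cup W^{*}})=B(\hat{\tau}_{K\cup S})$, which by $B(\hat{\tau}_{K})=\beta_{ya.k}-\tau$ (as in the proof of Lemma~\ref{lemma:bias}) reduces to $\beta_{ya.(k\setminus s)w^{*}}=\beta_{ya.ks}$. From $W^{*}\indep_{\mathcal{G}}A\mid(K\setminus S)\cup S$ and Lemma~\ref{lemma:equal-coeffs} we get $\beta_{ya.ks}=\beta_{ya.ksw^{*}}$, and from $S\indep_{\mathcal{G}}Y\mid A\cup(K\setminus S)\cup W^{*}$ and the same lemma we get $\beta_{ya.(k\setminus s)w^{*}}=\beta_{ya.(k\setminus s)w^{*}s}$; since $(K\setminus S)\cup W^{*}\cup S$ and $(K\cup S)\cup W^{*}$ are the same set, the two middle coefficients coincide and the claim follows. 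Combining the two parts gives $\mathrm{MSE}(\hat{\tau}_{K\setminus S\cup W^{*}})\le\mathrm{MSE}(\hat{\tau}_{K\cup S})$, which is what was needed.

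The main obstacle is the auxiliary set $U$: Lemma~\ref{lemma-d-separation-W}(e) delivers the separation of $S$ from $Y$ only after conditioning on $U$, whereas the variance and bias steps need $S\indep_{\mathcal{G}}Y\mid A\cup(K\setminus S)\cup W^{*}$ without it. One must therefore discharge $U$ — either by showing it can be taken empty when $S\cup W^{*}\cup(K\setminus S)$ is already valid, or by eliminating it from the conditioning set using the contraction and decomposition axioms together with items (b)--(d) of Lemma~\ref{lemma-d-separation-W} (which record how $U$ separates from $S$ and from $W^{*}$), or by instead comparing $K\setminus S\cup W^{*}\cup U$, whose extra elements are again non-suboptimal extended confounding variables, against $K\cup S$. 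The reason this needs care is structural: in $\mathcal{G}'=\mathcal{G}\setminus(A\rightarrow Y)$ the treatment $A$ is a collider on every path through it, so conditioning on $A$ in $\mathcal{G}$ can open paths and d-separations do not transfer from $\mathcal{G}'$ to $\mathcal{G}$ for free — precisely the bookkeeping already carried out inside Lemma~\ref{lemma-d-separation-W}.
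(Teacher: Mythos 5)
Your split of the MSE difference into ``bias unchanged'' plus ``variance decreases'' is not merely blocked by the auxiliary set $U$ — both halves are false in general, so no way of discharging $U$ can rescue the argument. For an invalid set the bias is $B(\hat\tau_{K\cup S})=\beta_{yu.aks}\,\sigma_{ua.ks}/\sigma_{aa.ks}$, where $U$ completes $K\cup S$ to an irreducible valid set; replacing $S$ by $W^*$ leaves the numerator unchanged (the paper's facts (2) and (3)) but \emph{increases} the denominator, since $\sigma_{aa.ks}\le\sigma_{aa.k\setminus s w^*}$ (fact (1)). Hence the bias strictly shrinks in magnitude whenever it is nonzero, and your claimed identity $\beta_{ya.(k\setminus s)w^*}=\beta_{ya.ks}$ fails; the d-separation $S\indep_{\mathcal G}Y\mid A\cup(K\setminus S)\cup W^*$ that it would require is false because conditioning on the collider $A$ opens $S\rightarrow A\leftarrow U\rightarrow Y$, and Lemma~\ref{lemma-d-separation-W}(e) only gives the separation with $U$ in the conditioning set. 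The variance half fails too. Take $U\to A$, $U\to Y$, $S\to A$, $S\to W^*$, $W^*\to Y$, $A\to Y$, with $A=U+S+\epsilon_A$, $W^*=S+\epsilon_W$, $Y=\tau A+0.01\,W^*+U+\epsilon_Y$, $\mathrm{var}(U)=\mathrm{var}(S)=\mathrm{var}(\epsilon_W)=1$, $\mathrm{var}(\epsilon_A)=\mathrm{var}(\epsilon_Y)=0.01$, and $K=\emptyset$. Then $S$ is suboptimal with witness $W^*$, yet $\sigma_{yy.as}\approx0.020$, $\sigma_{aa.s}=1.01$ versus $\sigma_{yy.aw^*}\approx0.348$, $\sigma_{aa.w^*}=1.51$, so $\mathrm{aVar}(\hat\tau_{\{W^*\}})\approx0.23>\mathrm{aVar}(\hat\tau_{\{S\}})\approx0.02$ and (the sets having equal size) $\mathrm{var}(\hat\tau_{\{W^*\}})>\mathrm{var}(\hat\tau_{\{S\}})$ for every admissible $n$; your inequality $\sigma_{yy.a(k\setminus s)w^*}\le\sigma_{yy.aks}$ is simply wrong here. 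The lemma's conclusion still holds because the squared bias drops from $\approx0.98$ to $\approx0.44$, i.e.\ the gain is a bias-for-variance trade, which any proof treating the two terms separately cannot capture.

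The paper's proof is built precisely around this trade: it keeps $U$ throughout, expresses both the bias and the variance of each estimator in terms of $U$, uses $\sigma_{yy.aks}=\sigma_{yy.aksu}+\beta_{yu.aks}\Sigma_{uu.aks}\beta_{yu.aks}^{T}$ (Lemma C.4 of Henckel et al.) together with the Schur complement to fold the squared bias and the $U$-component of the variance into a single matrix $\Psi$, and then shows $\Psi(\hat\tau_{K\cup S})\succeq\Psi(\hat\tau_{K\setminus S\cup W^*})$ term by term using the five identities/inequalities (1)--(5) obtained from Lemma~\ref{lemma-d-separation-W}. So your diagnosis that $U$ is the obstacle is right, but the correct response is not to eliminate $U$ from the conditioning sets (the $U$-free statements are false); it is to carry $U$ through the MSE decomposition as the paper does. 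Your separate observation that the witness $W^*$ can be taken non-suboptimal by iterating the witnessing relation is fine and is needed for the final set-level statement, but it does not repair the core comparison.
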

\begin{proof}
    Consider a suboptimal confounding variable $S \in \mathcal{S}^W$ and any adjustment set $K \subseteq \mathcal{V} \setminus \{A, Y\}$. The estimator $\hat \tau_{K \cup S}$ using the adjustment set $K \cup S$ has variance
    \begin{align}
        \text{var}(\hat \tau_{K \cup S}) &= \frac{1}{n - |K \cup S| - 3}\frac{\sigma_{yy.aks}}{\sigma_{aa.ks}}.
    \end{align}
    By the definition of suboptimal confounding variables, there exists another confounding variable $W^* \in \mathcal{W}$, such that all paths between $S$ and $Y$ are blocked in $\mathcal{G}'=\mathcal{G}\setminus (A \rightarrow Y)$ given $W^*$ and any other set $Z \subseteq \mathcal{V} \setminus \{A, Y\}$, and $S$ blocks all paths between $W^*$ and $A$ in $\mathcal{G}'$ given $Z$. The estimator $\hat \tau_{K \setminus S \cup W^*}$ using the adjustment set $K \setminus S \cup W^*$ has variance
    \begin{align}
        \text{var}(\hat \tau_{K \setminus S \cup W^*}) &= \frac{1}{n - |K \setminus S \cup W^*| - 3}\frac{\sigma_{yy.ak\setminus s w^*}}{\sigma_{aa.k\setminus s w^*}}.
    \end{align}
For the bias of the estimator $\hat \tau_{K \cup S}$ it holds that
    \begin{align}
        B(\hat \tau_{K \cup S}) &= E\left( \hat \tau_{K \cup S} - \tau \right) = \frac{\partial}{\partial a}\left\{ \int_{u} E (Y \mid a, k, s, u) \mathrm{pr}(u \mid a, k, s) du\right\} - \tau, 
        \label{eq:suboptimal-bias-start}
    \end{align}
    where $U$ is a potentially empty set, such that $K \cup S \cup U$ is a $K \cup S$-irreducible adjustment set. By Lemma~\ref{lemma-d-separation-W} (iii), $K\setminus S \cup W^* \cup U$ is also a valid adjustment set. If $U$ is empty, this means that $K \cup S$ and $K\setminus S \cup W^*$ are already valid adjustment sets, such that the bias is zero. By linearity,

    \begin{align}
        B(\hat \tau_{K \cup S}) &=  \frac{\partial}{\partial a} \left\{\left( \tau a + \beta_{yk.asu} k + \beta_{ys.aku} s + \mu_{y.aksu} \right) \int_u \mathrm{pr}(u \mid a,k,s) +  \int_u \beta_{yu.aks} u \mathrm{pr}(u \mid a,k,s) \right\} - \tau \nonumber \\
        &=  \beta_{yu.aks} \frac{\partial}{\partial a} E(U \mid a,k,s). 
    \end{align}
    By linearity and Gaussianity of the variables,
    \begin{align}
        B(\hat \tau_{K \cup S}) &= \beta_{yu.aks} \frac{\partial}{\partial a} \left( \beta_{ua.ks} a + \beta_{uk.as} k + \beta_{ua.ak} s + \mu_{u.aks} \right)= \beta_{yu.aks} \beta_{ua.ks} \nonumber \\
        &= \beta_{yu.aks} \frac{\sigma_{ua.ks}}{\sigma_{aa.ks}}. 
    \end{align}
    Similarly,
    \begin{align}
        B(\hat \tau_{K \setminus S \cup W^*}) &= \beta_{yu.ak\setminus sw^*} \frac{\sigma_{ua.k \setminus s w^*}}{\sigma_{aa.k \setminus sw^*}}. 
        \label{eq:suboptimal-bias-end}
    \end{align}
   The MSE of $\hat \tau_{K \cup S}$ is:
   \begin{align}
   \label{mse-invalid-set}
       \text{MSE}(\hat \tau_{K \cup S}) &= \left(\frac{\beta_{yu.aks} \sigma_{ua.ks}}{\sigma_{aa.ks}} \right)^2 + \frac{\sigma_{yy.aks}}{(n - |K \cup S| - 3)\sigma_{aa.ks}}. 
   \end{align}
   Since $|K \cup S| \geq |K \setminus S \cup W^*|$, it follows that
   \begin{align}
       \text{MSE}(\hat \tau_{K \cup S}) &\geq \left(\frac{\beta_{yu.aks} \sigma_{ua.ks}}{\sigma_{aa.ks}} \right)^2 + \frac{\sigma_{yy.aks}}{(n - |K \setminus S \cup W^*| - 3)\sigma_{aa.ks}}.
    \end{align}
    By Lemma C.4 from \citep{henckelGraphicalCriteriaEfficient2022a}, we have $\sigma_{yy.aks} = \sigma_{yy.aksu} + \beta_{yu.aks} \Sigma_{uu.aks}\beta_{yu.aks}^T$, and therefore 
    \begin{align}
    \label{w-mse-last-step-1}
     \text{MSE}(\hat \tau_{K \cup S})  &\geq \frac{1}{\sigma_{aa.ks}} \beta_{yu.aks} \Psi(\hat \tau_{K \cup S}) \beta_{yu.aks}^T + \frac{\sigma_{yy.aksu}}{(n - |K \setminus S \cup W^*| - 3)\sigma_{aa.ks}}, 
    \end{align}
    where
    \begin{align*}
       \Psi(\hat \tau_{K \cup S}) &= \frac{\sigma_{ua.ks} \sigma_{ua.ks}^T}{\sigma_{aa.ks}} + \frac{\Sigma_{uu.aks}}{n - |K \cup S| - 3}\geq \frac{\sigma_{ua.ks} \sigma_{ua.ks}^T}{\sigma_{aa.ks}} + \frac{\Sigma_{uu.aks}}{n - |K \setminus S \cup W^*| - 3}.
    \end{align*}
    By the Schur complement formula, it holds that $\Sigma_{uu.aks}=\Sigma_{uu.ks} - \frac{\sigma_{ua.ks}\sigma_{ua.ks}^T}{\sigma_{aa.ks}}$, such that
    \begin{align}
        \Psi(\hat \tau_{K \cup S}) &\geq \frac{\sigma_{ua.ks} \sigma_{ua.ks}^T}{\sigma_{aa.ks}} + \frac{\Sigma_{uu.ks} - \frac{\sigma_{ua.ks}\sigma_{ua.ks}^T}{\sigma_{aa.ks}}}{n - |K \setminus S \cup W^*| - 3}, \nonumber \\
        &= \frac{\sigma_{ua.ks} \sigma_{ua.ks}^T (n - |K \setminus S \cup W^*| - 3) +\Sigma_{uu.ks} \sigma_{aa.ks}- \sigma_{ua.ks}\sigma_{ua.ks}^T}{\sigma_{aa.ks}(n - |K \setminus S \cup W^*| - 3)} , \nonumber \\
        &= \frac{\sigma_{ua.ks} \sigma_{ua.ks}^T (n - |K \setminus S \cup W^*| - 4) +\Sigma_{uu.ks} \sigma_{aa.ks}}{\sigma_{aa.ks}(n - |K \setminus S \cup W^*| - 3)}, \nonumber \\ 
        &=  \frac{\sigma_{ua.ks} \sigma_{ua.ks}^T (n - |K \setminus S \cup W^*| - 4)}{\sigma_{aa.ks}(n - |K \setminus S \cup W^*| - 3)} + \frac{\Sigma_{uu.ks}}{n - |K \setminus S \cup W^*| - 3}.
        \label{w-mse-last-step-2}
    \end{align}
Similarly,
    \begin{align*}
        \text{MSE}(\hat \tau_{K \setminus S \cup W^*}) 
        &= \frac{1}{\sigma_{aa.k \setminus s w^*}} \beta_{yu.ak \setminus s w^*} \Psi(\hat \tau_{K \setminus S \cup W^*}) \beta_{yu.ak \setminus s w^*}^T  + \frac{\sigma_{yy.ak \setminus s w^*u}}{(n - |K \setminus S \cup W^*| - 3)\sigma_{aa.k \setminus s w^*}},
    \end{align*}
where
\begin{align*}
    \Psi(\hat \tau_{K \setminus S \cup W^*}) &= \frac{\sigma_{ua.k \setminus s w^*} \sigma_{ua.k \setminus s w^*}^T (n - |K \setminus S \cup W^*| - 4)}{\sigma_{aa.k \setminus s w^*}(n - |K \setminus S \cup W^*| - 3)} + \frac{\Sigma_{uu.k \setminus s w^*}}{n - |K \setminus S \cup W^*| - 3} .
\end{align*}
We now show that (1) $\sigma_{aa.ks} \leq \sigma_{aa.k \setminus s w^*}$, (2) $\beta_{yu.aks} = \beta_{yu.a\setminus s w^*}$, (3) $\sigma_{ua.ks} = \sigma_{ua.k \setminus s w^*}$, (4) $\Sigma_{uu.ks} = \Sigma_{uu.k \setminus s w^*}$ and (5) $\sigma_{yy.aksu} \geq \sigma_{yy.ak \setminus s w^*u}$. For this, we use the d-separation properties of suboptimal confounding variables from Lemma~\ref{lemma-d-separation-W}. 

By applying Lemma~\ref{lemma-d-separation-W} (a) and Corollary~\ref{cor:ci-variance-separated} (a) in conjunction with the causal Markov property, we have $\sigma_{aa.ks} \leq \sigma_{aa.k \setminus s w^*}$ if $W^* \not\in K$. If $W^* \in K$, we have $\sigma_{aa.ks} = \sigma_{aa.ks w^*} \leq \sigma_{aa.k \setminus s w^*}$ by the law of total variance. It follows that (1) $\sigma_{aa.ks} \leq \sigma_{aa.k \setminus s w^*}$.

By applying Lemma~\ref{lemma-d-separation-W} (d) and Lemma~\ref{lemma:equal-coeffs} \citep{henckelGraphicalCriteriaEfficient2022a} in conjunction with the causal Markov property, we have $\beta_{yu.aks} = \beta_{yu.aksw^*}$. By applying Lemma~\ref{lemma-d-separation-W} (e) and Lemma~\ref{lemma:equal-coeffs} \citep{henckelGraphicalCriteriaEfficient2022a} in conjunction with the causal Markov property, we have $\beta_{yu.aksw^*} = \beta_{yu.ak\setminus s w^*}$. It follows that (2) $\beta_{yu.aks} =  \beta_{yu.a\setminus s w^*}$. 

By applying Lemma~\ref{lemma-d-separation-W} (a) and Lemma~\ref{lemma:equal-cov} \citep{pmlr-v213-pena23a} in conjunction with the causal Markov property, we have $\sigma_{ua.ks} = \sigma_{ua.ksw^*}$. By applying Lemma~\ref{lemma-d-separation-W} (b) with $Z=K \setminus S \cup W^*$ and Lemma~\ref{lemma:equal-cov} \citep{pmlr-v213-pena23a} in conjunction with the causal Markov property, we have $\sigma_{ua.ksw^*} = \sigma_{ua.k \setminus sw^*}$. It follows that (3) $\sigma_{ua.ks} = \sigma_{ua.k \setminus s w^*}$. 

By applying Lemma~\ref{lemma-d-separation-W} (c) with $Z = K \cup S$ and Lemma~\ref{lemma:equal-coeffs} \citep{henckelGraphicalCriteriaEfficient2022a} in conjunction with the causal Markov property, we have $\Sigma_{uu.ks} = \Sigma_{uu.ksw^*}$. By applying Lemma~\ref{lemma-d-separation-W} (b) with $Z = K \setminus S$ and Lemma~\ref{lemma:equal-coeffs} \citep{henckelGraphicalCriteriaEfficient2022a} in conjunction with the causal Markov property, we have $\Sigma_{uu.ksw^*} = \Sigma_{uu.k \setminus sw^*}$. It follows that (4) $\Sigma_{uu.ks} = \Sigma_{uu.k \setminus s w^*}$.

By applying Lemma~\ref{lemma-d-separation-W} (e) and Corollary~\ref{cor:ci-variance-separated} (b) in conjunction with the causal Markov property, we have $\sigma_{yy.aksu} \geq \sigma_{yy.ak \setminus s w^*u}$ if $W^* \not\in K$. If $W^* \in K$, we have $\sigma_{yy.aksu} \geq \sigma_{yy.ak \setminus s w^*u} = \sigma_{yy.ak \setminus su}$ by the law of total variance. It follows that (5) $\sigma_{yy.aksu} \geq \sigma_{yy.ak \setminus s w^*u}$.

Now we can apply (1)\textendash(5) to Equation~\eqref{w-mse-last-step-1} and  Equation~\eqref{w-mse-last-step-2}, which gives $\Psi(\hat \tau_{K \cup S }) \geq\Psi(\hat \tau_{K \setminus S \cup W^*})$, and
\begin{align*}
        \text{MSE}(\hat \tau_{K \cup S})  &\geq \frac{1}{\sigma_{aa.k \setminus s w^*}} \beta_{yu.ak \setminus s w^*} \Psi(\hat \tau_{K \setminus S \cup W^*}) \beta_{yu.ak \setminus s w^*}^T  + \frac{\sigma_{yy.ak \setminus s w^*u}}{(n - |K \setminus S \cup W^*| - 3)\sigma_{aa.k \setminus s w^*}^2}, \\
        &= \text{MSE}(\hat \tau_{K \setminus S \cup W^*}),      
\end{align*}
and this concludes the proof.
\end{proof}

\begin{lemma}[Exclusion of Irrelevant Variables]
\label{lemma:irrelevant}Any variable that is an irrelevant variable $I \in \mathcal{I}$ should not be considered for the MSE-optimal adjustment set in the sense that for any adjustment set $K \cup I$, the adjustment set $K \setminus I$ provides a lower MSE:
\begin{equation*}
    O_n \cap \mathcal{I} = \emptyset
\end{equation*}
\end{lemma}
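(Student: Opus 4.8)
The plan is to compare, for a fixed irrelevant variable $I\in\mathcal I$ and an arbitrary adjustment set, the two sets $L^{+}$ (the set, assumed to contain $I$) and $L^{-}:=L^{+}\setminus\{I\}$, and to show $\mathrm{MSE}(\hat\tau_{L^{-}})\le\mathrm{MSE}(\hat\tau_{L^{+}})$; since $|L^{+}|=|L^{-}|+1$ and the set was arbitrary, this yields $O_{n}\cap\mathcal I=\emptyset$. The important difference from Lemma~\ref{lemma:bias} is that an irrelevant variable need \emph{not} preserve the bias — it can amplify it — so I would not try to reduce the claim to a statement about variances. Instead I would reuse the $U$-decomposition and the $\Psi$-identity from the proof of Lemma~\ref{lemma:confounding} essentially verbatim, so that the bias change and the variance change are handled simultaneously.

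Concretely: by Proposition~\ref{prop:irreducible} fix $U\subseteq\mathcal W$ with $L^{+}\cup U$ an $L^{+}$-irreducible (hence valid) adjustment set; by Lemma~\ref{lemma-d-separation-I}(ii) with $Z=L^{-}$, the set $L^{-}\cup U$ is valid too. The bias and mean-squared-error formulas obtained in the proof of Lemma~\ref{lemma:confounding} then apply to \emph{both} sets, giving $B(\hat\tau_{L^{\pm}})=\beta_{yu.al^{\pm}}\sigma_{ua.l^{\pm}}/\sigma_{aa.l^{\pm}}$ and a corresponding $\Psi$-form for $\mathrm{MSE}(\hat\tau_{L^{\pm}})$. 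I would then verify the analogues of the five comparisons used there: (1) $\sigma_{aa.l^{+}}\le\sigma_{aa.l^{-}}$ by the law of total variance; (2) $\beta_{yu.al^{+}}=\beta_{yu.al^{-}}$ and $\sigma_{yy.al^{+}u}=\sigma_{yy.al^{-}u}$, from $I\indep_{\mathcal G}Y\mid A\cup L^{-}\cup U$ (Lemma~\ref{lemma-d-separation-I}(a)) together with Lemma~\ref{lemma:equal-coeffs}; and (3) $\sigma_{ua.l^{+}}=\sigma_{ua.l^{-}}$ and $\Sigma_{uu.l^{+}}=\Sigma_{uu.l^{-}}$, from $I\indep_{\mathcal G}U\mid L^{-}$ (Lemma~\ref{lemma-d-separation-I}(b)) together with Lemma~\ref{lemma:equal-cov} and Lemma~\ref{lemma:equal-coeffs}. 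Since $|L^{+}|>|L^{-}|$, I would lower-bound $\mathrm{MSE}(\hat\tau_{L^{+}})$ by replacing $n-|L^{+}|-3$ with $n-|L^{-}|-3$ (which only lowers the expression, as in Lemma~\ref{lemma:confounding}), feed in (1)--(3), and conclude $\mathrm{MSE}(\hat\tau_{L^{+}})\ge\mathrm{MSE}(\hat\tau_{L^{-}})$ — strictly when $\mathrm{var}(\epsilon_{Y})>0$ — exactly as in the closing computation of that lemma.

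The main obstacle is the bias: it is no longer invariant, so one has to see it can only move in the right direction. But (2)--(3) give $\beta_{yu.al^{+}}=\beta_{yu.al^{-}}$ and $\sigma_{ua.l^{+}}=\sigma_{ua.l^{-}}$, so by (1) we get $|B(\hat\tau_{L^{+}})|\ge|B(\hat\tau_{L^{-}})|$ — adding an irrelevant variable can only amplify the bias — and the $\Psi$-identity then fuses this with the (not individually monotone) change in the variance term into a single inequality. So the real content is checking that the four equalities in (2)--(3) do follow from the two d-separations $I\indep_{\mathcal G}Y\mid A\cup L^{-}\cup U$ and $I\indep_{\mathcal G}U\mid L^{-}$ of Lemma~\ref{lemma-d-separation-I}; once these are verified, no computation beyond that already carried out for Lemma~\ref{lemma:confounding} is needed.
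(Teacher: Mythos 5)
Your proposal is correct and follows essentially the same route as the paper's own proof: the same $U$-decomposition via $K\cup I$-irreducibility, reuse of the bias and $\Psi$-form MSE expressions from the suboptimal-confounding lemma, the same five equalities/inequalities derived from the two d-separations $I \indep_{\mathcal{G}} Y \mid A \cup K \cup U$ and $I \indep_{\mathcal{G}} U \mid K$ together with the equal-coefficients and equal-covariance lemmas, and the same strict gain from replacing $n-|K\cup I|-3$ by $n-|K\setminus I|-3$. The only cosmetic difference is your explicit side remark that the bias can only be amplified, which the paper leaves implicit inside the $\Psi$ comparison.
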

\begin{proof}
    Consider an irrelevant variable $I \in \mathcal{I}$ and any adjustment set $K \subseteq \mathcal{V}$. By Equation~\eqref{mse-invalid-set}, the estimator $\hat \tau_{K \cup I}$ has the MSE
    \begin{align}
    \label{eq:irrelevant-first}
       \text{MSE}(\hat \tau_{K \cup I}) &= \left(\frac{\beta_{yu.aki} \sigma_{ua.ki}}{\sigma_{aa.ki}} \right)^2 + \frac{\sigma_{yy.aki}}{(n - |K \cup I| - 3)\sigma_{aa.ki}}, 
   \end{align}
    where $U$ is a potentially empty set, such that $K \cup I \cup U$ is a $K \cup I$-irreducible adjustment set. By Lemma~\ref{lemma-d-separation-I} (ii), $K \cup U$ is also a valid adjustment set. Since $|K \cup I|> |K \setminus I|$, it follows that
    \begin{align}
       \text{MSE}(\hat \tau_{K \cup I}) &> \left(\frac{\beta_{yu.aki} \sigma_{ua.ki}}{\sigma_{aa.ki}} \right)^2 + \frac{\sigma_{yy.aki}}{(n - |K \setminus I| - 3)\sigma_{aa.ki}}. 
   \end{align}
   From Equations~\eqref{w-mse-last-step-1}\textendash\eqref{w-mse-last-step-1}, it follows that 
   \begin{align}
   \label{i-mse-last-step}
        \text{MSE}(\hat \tau_{K \cup I})  &>\frac{1}{\sigma_{aa.ki}} \beta_{yu.aki} \Psi(\hat \tau_{K \cup I}) \beta_{yu.aki}^T  + \frac{\sigma_{yy.akiu}}{(n - |K \setminus I| - 3)\sigma_{aa.ki}},
    \end{align}
    where
    \begin{align}
         \Psi(\hat \tau_{K \cup I}) & >\frac{\sigma_{ua.ki} \sigma_{ua.ki}^T (n - |K \setminus I | - 4)}{\sigma_{aa.ki}(n - |K \setminus I| - 3)} + \frac{\Sigma_{uu.ki}}{n - |K \setminus I| - 3}.
    \end{align}
    Similarly,
    \begin{align}
        \text{MSE}(\hat \tau_{K \setminus I})  &=\frac{1}{\sigma_{aa.k \setminus i}} \beta_{yu.ak \setminus i} \Psi(\hat \tau_{K \setminus I}) \beta_{yu.ak \setminus i }^T  + \frac{\sigma_{yy.ak \setminus i u}}{(n - |K \setminus I| - 3)\sigma_{aa.k \setminus i}},
    \end{align}
    where
    \begin{align}
        \Psi(\hat \tau_{K \setminus I}) =  \frac{\sigma_{ua.k \setminus i} \sigma_{ua.k \setminus i}^T (n - |K \setminus I | - 4)}{\sigma_{aa.k \setminus i}(n - |K \setminus I| - 3)} + \frac{\Sigma_{uu.k \setminus i }}{n - |K \setminus I| - 3}. 
    \end{align}
    We now show that (1) $\sigma_{aa.ki} \leq \sigma_{aa.k \setminus i }$, (2) $\beta_{yu.aki} = \beta_{yu.ak\setminus i}$, (3) $\sigma_{ua.ki} = \sigma_{ua.k \setminus i }$, (4) $\Sigma_{uu.ki} = \Sigma_{uu.k \setminus i }$ and (5) $\sigma_{yy.akiu} = \sigma_{yy.ak \setminus i u}$. For this, we use the d-separation properties of irrelevant variables from Lemma~\ref{lemma-d-separation-I}. 

    From the law of total variance, it follows that (1) $\sigma_{aa.ki} \leq \sigma_{aa.k \setminus i }$. By Lemma~\ref{lemma-d-separation-I}, we have $I \indep_{\mathcal{G}} Y \mid A \cup U \cup K$ and $I \indep_{\mathcal{G}} U \mid K$. By applying Lemma~\ref{lemma:equal-coeffs} in conjunction with the causal Markov property, we have (2) $\beta_{yu.aki} = \beta_{yu.ak\setminus i}$, (4) $\Sigma_{uu.ki} = \Sigma_{uu.k \setminus i }$ and (5) $\sigma_{yy.akiu} = \sigma_{yy.ak \setminus i u}$. By applying Lemma~\ref{lemma:equal-cov} in conjunction with the causal Markov property, we have (3) $\sigma_{ua.ki} = \sigma_{ua.k \setminus i }$.

Now we can apply (1)\textendash(5) to Equation~\eqref{i-mse-last-step}, which gives $\Psi(\hat \tau_{K \cup I }) >\Psi(\hat \tau_{K \setminus I})$, and
\begin{align}
\label{eq:irrelevant-last}
        \text{MSE}(\hat \tau_{K \cup I})  &> \frac{1}{\sigma_{aa.k \setminus i}} \beta_{yu.ak \setminus i} \Psi(\hat \tau_{K \setminus I})\beta_{yu.ak \setminus i }^T  + \frac{\sigma_{yy.ak \setminus i u}}{(n - |K \setminus I| - 3)\sigma_{aa.k \setminus i}}, \\
        &= \text{MSE}(\hat \tau_{K \setminus I }),      
\end{align}
and this concludes the proof.
\end{proof}

To conclude the proof of Theorem~2, we recall that precision variables, extended confounding variables and irrelevant variables form a complete partitioning of all covariates:
\begin{align*}
    \mathcal{V} \setminus \{A, Y\} &= \mathcal{P} \cup \mathcal{W} \cup \mathcal{I}
\end{align*}
The intersection of each MSE-optimal adjustment set $O_n^i \in \mathcal{O}_n(\mathcal{M}, \hat \tau_{K})$ with all covariates $\mathcal{V} \setminus \{A, Y\}$ is:
\begin{align*}
    O^i_n \cap \mathcal{V}\setminus \{A, Y\} &= O^i_n \cap \left( \mathcal{P} \cup \mathcal{W} \cup \mathcal{I} \right)  \\
    &= (O^i_n \cap \mathcal{P}) \cup (O^i_n \cap \mathcal{W}) \cup (O^i_n \cap \mathcal{I}).
\end{align*}
By Lemma~\ref{lemma:irrelevant}, we know that $O^i_n \cap \mathcal{I} = \emptyset$. We proceed by expanding the sets of precision variables $\mathcal{P}$ and extended confounding variables $\mathcal{W}$ into their suboptimal and non-suboptimal subsets, i.e. $\mathcal{P} = \mathcal{S}^P \cup (\mathcal{P} \setminus \mathcal{S}^P)$. It follows:
\begin{align*}
     O^i_n \cap \mathcal{V}\setminus \{A, Y\} &= \left[O^i_n \cap \left\{\mathcal{S}^P \cup (\mathcal{P} \setminus \mathcal{S}^P)\right\}\right] \cup \left[O^i_n \cap \left\{\mathcal{S}^W \cup (\mathcal{W} \setminus \mathcal{S}^W)\right\}\right] \\
     &= \{  (O^i_n \cap \mathcal{S}^P) \cup (O^i_n \cap \mathcal{P}\setminus \mathcal{S}^P) \} \cup \{ (O^i_n \cap \mathcal{S}^W) \cup (O^i_n \cap \mathcal{W}\setminus \mathcal{S}^W) \}
\end{align*}
Let $\mathcal{O}^*_n(\mathcal{M}, \hat \tau_K)$ be the set of MSE-optimal adjustment sets, such that $O^{*, i}_n \cap \mathcal{S}^P = \emptyset$ for all $O^{*, i}_n \in \mathcal{O}^*_n(\mathcal{M}, \hat \tau_K)$. By Lemma~\ref{lemma:precision}, we know that such an MSE-optimal adjustment set exists, i.e. $\mathcal{O}^*_n(\mathcal{M}, \hat \tau_K)$ is non-empty. By Lemma~\ref{lemma:confounding}, for any adjustment set $K$ that includes a suboptimal confounding variable $S$, there exists a non-suboptimal confounding variable $W^*$, such that $K \setminus S \cup W^*$ yields a lower or equal mean squared error than $K$. It follows that $\mathcal{O}^*_n(\mathcal{M}, \hat \tau_K)$ includes at least one MSE-optimal adjustment set $O^*_n$, such that $O^{*}_n \cap \mathcal{S}^W = \emptyset$. Hence, there exists an MSE-optimal adjustment set $O^*_n$, such that
\begin{align*}
    O^*_n \cap \mathcal{V}\setminus \{A, Y\} 
    &= (O^*_n \cap \mathcal{P}\setminus \mathcal{S}^P) \cup  (O^*_n \cap \mathcal{W}\setminus \mathcal{S}^W) \\
    &= O^*_n \cap  (\mathcal{P} \setminus \mathcal{S}^P \cup \mathcal{W} \setminus \mathcal{S}^W ) \\
    &\subseteq  \mathcal{P} \setminus \mathcal{S}^P \cup \mathcal{W} \setminus \mathcal{S}^W.
\end{align*}

\subsection*{Proof of Theorem 3 (Forbidden combinations)}
To prove this, we show that if the d-separation $L_i \indep_{\mathcal{G}'} Y \mid   L_{-i} \cup K$ holds in $\mathcal{G}'=\mathcal{G} \setminus (A \rightarrow Y)$ for any $K \subseteq \mathcal{V} \setminus \{A, Y\}$, any adjustment set $X$ with $L \subseteq X$ provides a higher mean squared error than the adjustment set $X \setminus L_i$. For this, we can follow Equations~\eqref{eq:irrelevant-first}\textendash\eqref{eq:irrelevant-last} in the proof of Lemma~\ref{lemma:irrelevant} for irrelevant variables, assuming that $K = X \setminus L_i$ and $I = L_i$. The variable $L_i$ can be considered an irrelevant variable given $L_{-i}$ in the sense that the d-separation properties for irrelevant variables in Lemma~\ref{lemma-d-separation-I} hold for any $Z$ where $L_{-i} \subseteq Z$, based on the d-separation $L_i \indep_{\mathcal{G}'} Y \mid   L_{-i} \cup K$ for any $K \subseteq \mathcal{V} \setminus \{A, Y\}$. Hence, we can follow Lemma~\ref{lemma:irrelevant} to show that the adjustment set $K \setminus I = X \setminus L_{-i} $ provides a lower mean squared error than the adjustment set $K \cup I = X$ if $L \subseteq X$, and we conclude that $ X$ can not be an MSE-optimal adjustment set if $L \subseteq X$.
\begin{comment}
\end{comment}

\subsection*{Proof of Theorem 4 (Suboptimal valid adjustment sets)}
We know that both $K$ and $O$ are valid adjustment sets, which means they yield zero bias. According to Theorem 3 in \cite{henckelGraphicalCriteriaEfficient2022a}, the optimal adjustment set $O$ provides an asymptotic variance that is lower than or equal to the asymptotic variance provided by any other valid adjustment set, i.e.
\begin{equation}
    \mathrm{aVar}(\hat \tau_{O}) \leq \mathrm{aVar}(\hat \tau_{K}).
\end{equation}
Using our assumption that $|K| \geq |O|$, we can show that the mean squared error yielded by $O$ is smaller than or equal to the mean squared error yielded by $K$:
\begin{align*}
     \rm{MSE}(\hat \tau_O) &= B^2(\hat \tau_O) + \frac{\mathrm{aVar}(\hat \tau_{O})}{n-|O|-3} \\
    &= B^2(\hat \tau_K) + \frac{\mathrm{aVar}(\hat \tau_{O})}{n-|O|-3} \\
    &\leq B^2(\hat \tau_K) + \frac{\mathrm{aVar}(\hat \tau_{K})}{n-|O|-3} \\
    &\leq B^2(\hat \tau_K) + \frac{\mathrm{aVar}(\hat \tau_{K})}{n-|K|-3} \\
    &= \rm{MSE}(\hat \tau_K).
\end{align*}

\section*{Appendix 3}
\label{A:examples}
\subsection*{Examples}
In the following, we show how our graphical criteria can be applied to the graphs in Figure~1 and 2 from the main paper. First, we copy the graphs here for convenience.

\begin{figure}[h]
    \centering
       \begin{subfigure}[b]{0.33\textwidth}
        \centering
        \begin{tikzpicture}[
            font=\fontsize{8}{10}\selectfont,
            node distance=1.3cm,
            on grid,
            auto,
            block/.style={circle, draw, inner sep=0pt, outer sep=0pt, minimum size=0.5cm}
        ]

        \node [block] (A) {A};
        \node [block, above right=of A] (V2) {$W_2$};
        \node [block, right=of A, below right=of V2] (Y) {$Y$};
        \node [block, above right=of V2] (O1) {$O_1$};
        \node [block, above left=of V2] (V1) {$W_1$};         
        \node [block, below right=of A] (O2) {$O_2$};
        
        \draw[-{Latex[length=2mm]}, blue] (A) -- (Y) node[midway, above, black] {}; 
        \draw[-{Latex[length=2mm]}] (V2) -- (O1) node[midway, right, pos=0.1] {};          
        \draw[-{Latex[length=2mm]}] (V1) -- (O1)node[midway, above]{};                   
        \draw[-{Latex[length=2mm]}] (O1) -- (Y) node[midway, right] {};                  
        \draw[-{Latex[length=2mm]}] (V1) -- (A) node[midway, left] {};                   
        \draw[-{Latex[length=2mm]}] (V2) -- (A) node[midway, left, pos=0.1] {};           
        \draw[-{Latex[length=2mm]}] (O2) -- (A) node[midway, left, pos=0.2] {};            
        \draw[-{Latex[length=2mm]}] (O2) -- (Y) node[midway, right, pos=0.2] {};         
                
\end{tikzpicture}
        \caption{$\mathcal{G}_1$}
        \label{a:fig:g1}
    \end{subfigure}%    
    \begin{subfigure}[b]{0.33\textwidth}
    \centering
        \begin{tikzpicture}[
            font=\fontsize{8}{10}\selectfont,
            node distance=1.3cm,
            on grid,
            auto,
            block/.style={circle, draw, inner sep=0pt, outer sep=0pt, minimum size=0.5cm}
        ]
        
        % Nodes
        \node [block] (A) {A};
        \node [block, above right=of A] (M) {$C_1$};
        \node [block, below right=of M] (Y) {$Y$};
        \node [block, above=of Y, above right=of M] (O1) {$O_1$};
        \node [block, above=of A, above left=of M] (I1) {$W_1$};
        \node [block, below right=of A] (O2) {$O_2$};
        
        \draw[-{Latex[length=2mm]}, blue] (A) -- (Y) node[midway, above, black] {};  
        \draw[-{Latex[length=2mm]}] (O1) -- (M) node[midway, left, pos=0.1] {};              
        \draw[-{Latex[length=2mm]}] (I1) -- (M) node[midway, right, pos=0.1] {};          
        \draw[-{Latex[length=2mm]}] (O1) -- (Y) node[midway, right] {};                  
        \draw[-{Latex[length=2mm]}] (I1) -- (A) node[midway, left] {};                    
        \draw[-{Latex[length=2mm]}] (O2) -- (A) node[midway, left, pos=0.2] {};           
        \draw[-{Latex[length=2mm]}] (O2) -- (Y) node[midway, right, pos=0.2] {};         
                
\end{tikzpicture}
        \caption{$\mathcal{G}_2$}
        \label{a:fig:g2}
    \end{subfigure}
    \begin{subfigure}[b]{0.33\textwidth}
           \centering
           \begin{tikzpicture}[
        font=\fontsize{8}{10}\selectfont,
        node distance=1.3cm,
        on grid,
        auto,
        block/.style={circle, draw, inner sep=0pt, outer sep=0pt, minimum size=0.5cm}
    ]
    
    % Nodes
    \node [block] (A) {A};
    \node [block, right=of A] (Y) {$Y$};
    \node [block, above=of Y] (O1) {$O_1$};
    \node [block, above=of A] (I1) {$S_1$};
    \node [block, right=of Y] (P1) {$O_2$};
    \node [block, above=of P1] (I2) {$S_2$};
    \node [block, right=of P1] (I3) {$S_3$};
    \node [block, below=of P1] (P) {$P_1$};
    \node [block, left=of P] (O3) {$O_3$};
    \node [block, right=of P] (O4) {$O_4$};
    \node [block, left=of O3] (IV1) {$I_1$};
    
    % Edges with coefficients
    \draw[-{Latex[length=2mm]}, blue] (A) -- (Y) node[midway, above, black] {$\tau$};
    \draw[-{Latex[length=2mm]}] (I1) -- (O1) node[midway, right, pos=0.1] {};
    \draw[-{Latex[length=2mm]}] (O1) -- (Y) node[midway, right] {};
    \draw[-{Latex[length=2mm]}] (I1) -- (A) node[midway, left] {};    
    \draw[-{Latex[length=2mm]}] (P1) -- (Y) node[midway, left] {};   
    \draw[-{Latex[length=2mm]}] (I2) -- (P1) node[midway, left] {};    
    \draw[-{Latex[length=2mm]}] (I3) -- (P1) node[midway, left] {};  
    \draw[-{Latex[length=2mm]}] (P) -- (O3) node[midway, left] {};  
    \draw[-{Latex[length=2mm]}] (O3) -- (Y) node[midway, left] {};  
    \draw[-{Latex[length=2mm]}] (P) -- (O4) node[midway, left] {}; 
    \draw[-{Latex[length=2mm]}] (O4) -- (Y) node[midway, left] {}; 
    \draw[-{Latex[length=2mm]}] (IV1) -- (A) node[midway, left] {}; 
    \end{tikzpicture}
    \caption{$\mathcal{G}_3$}
    \label{a:fig:g3}
    \end{subfigure}
    \label{a:fig:examples}
\end{figure}
In $\mathcal{G}_1$, we have $2^4= 16$ possible subsets of $\mathcal{V} \setminus \{A, Y\} = \{W_1, W_2, O_1, O_2\}$. With Theorem~3, we can identify $\{W_1, O_1\}$ and $\{W_2, O_1\}$ as forbidden combinations and can remove any adjustment set $Z$ with $\{W_1, O_1\} \subseteq Z$ or $\{W_2, O_1\} \subseteq Z$, i.e. the adjustment sets $\{W_1, O_1\}$, $\{W_1, O_1, W_2\}$, $\{W_1, O_1, O_2\}$, $\{W_1, O_1, W_2, O_2\}$, $\{W_2, O_1\}$, $\{W_2, O_1, O_2\}$. Using Theorem~4, we can also remove the valid adjustment set $\{W_1, W_2, O_2\}$. We end up with a search space consisting of 9 possible adjustment sets, namely $\{\}$, $\{O_1\}$, $\{O_2\}$, $\{W_1, O_2\}$, $\{W_2, O_2\}$, $\{W_1\}$, $\{W_2\}$, $\{W_1, W_2\}$, $\{O_1, O_2\}$.

In $\mathcal{G}_2$, we also have $2^4= 16$ possible subsets of $\mathcal{V} \setminus \{A, Y\} = \{W_1, C_1, O_1, O_2\}$. With Definition~7, we can identify $W_1$ as a suboptimal confounding variable. With Theorem~2, we can remove every adjustment set $Z$ with $W_1 \in Z$ from the search space, resulting in $2^3=8$ remaining sets. Using Theorem~3, we can identify $\{C_1, O_1\}$ as a forbidden combination, and remove the adjustment sets $\{C_1, O_1\}$ and $\{C_1, O_1, O_2\}$. We end up with a search space consisting of 6 possible adjustment sets, namely $\{\}$, $\{O_1\}$, $\{O_2\}$, $\{C_1\}$, $\{C_1, O_2\}$, $\{O_1, O_2\}$.

In $\mathcal{G}_3$, we have $2^9= 512$ possible subsets of $\mathcal{V} \setminus \{A, Y\}$. With Definition~7, Definition~5 and Definition~\ref{def:irrelevant}, we can identify $S_1$ as a suboptimal confounding variable, $S_2$ and $S_3$ as suboptimal precision variables and $I_1$ as irrelevant variable respectively. With Theorem~2, we can remove every adjustment set $Z$ containing $S_1$, $S_2$, $S_3$ or $I_1$ from the search space, resulting in $2^5=32$ remaining sets. Using Theorem~3, we can identify $\{P_1, O_3, O_4\}$ as a forbidden combination, and remove the adjustment sets $\{P_1, O_3, O_4\}$, $\{P_1, O_3, O_4, O_1\}$, $\{P_1, O_3, O_4, O_2\}$ and $\{P_1, O_3, O_4, O_1, O_2\}$. We end up with a search space consisting of $32 - 4 = 28$ adjustment sets.

\label{A:algorithm}
\label{app:results}
\section*{Appendix 4}
\subsection*{Algorithm}
First, we prune the variables to exclude suboptimal precision variables, suboptimal confounding variables and irrelevant variables with Theorem~2. Then, we prune the power set of the remaining candidate variables $\mathbb{P}(\text{candidates})$ with Theorem~3 and Theorem~4. The remaining variable sets form our search space for the MSE-optimal adjustment set.

For each potential MSE-optimal adjustment set $K$, we estimate the variance of the ordinary least squares estimator $\hat \tau_{K}$ from Equation~\eqref{eq:ols-var-1}, i.e. $\mathrm{var}(\hat \tau_{K}) = \sigma_{yy.ak} / \mathrm{RSS}_{a.k}$, with $\hat \sigma_{yy.ak} = \mathrm{RSS}_{y.ak} / (n - |K| - 1)$. If $\mathrm{var}(\hat \tau_{K})$ is larger or equal to the variance of the optimal adjustment set $O$, we can discard $K$, as it can not yield a better mean squared error than $O$. To estimate the bias yielded by all remaining adjustment sets, we use 1000 bootstrap resamples of the data to average the difference between the estimate of $\hat \tau_{K}$ and the unbiased estimate of $\hat \tau_{O}$. 

\begin{algorithm}
\caption{Estimating Treatment Effect with MSE-Optimal Adjustment Set}
\KwIn{Directed Acyclic Graph $\mathcal{G}=(\mathcal{V}, \mathcal{E})$, Data $\mathcal{D}$}
\KwOut{Estimated Treatment Effect $\hat{\tau}$}
Initialize $best\_set \gets O(\mathcal{G})$\;
Initialize $o\_variance \gets \text{EstimateVariance}(\mathcal{D}, O(\mathcal{G}))$\;
Initialize $min\_mse \gets o\_variance$\;
$candidates \gets \text{PruneVariables}(\mathcal{G})$\;
$adjustment\_sets \gets \mathbb{P}(candidates)$\;
$\mathcal{Z} \gets \text{PruneCombinations}(\mathcal{G}, adjustment\_sets)$\;
\For{$Z \in \mathcal{Z}$}{
    $variance \gets \text{EstimateVariance}(\mathcal{D}, Z)$\;
    \If{$variance < o\_variance$}{
        $bias \gets \text{EstimateBias}(\mathcal{D}, Z, O(\mathcal{G}))$\;
        $mse \gets bias^2 + variance$\;
        \If{$mse < min\_mse$}{
            $min\_mse \gets mse$\;
            $best\_set \gets Z$\;
        }
    }
}
$estimated\_effect \gets \text{OrdinaryLeastSquares}(\mathcal{D}, best\_set)$\;
\Return $estimated\_effect$\;
\label{algo}
\end{algorithm}

\subsection*{Experiments}

\begin{table}[htbp]
\centering
\footnotesize            

\caption{Comparison of $O$ and $\hat O_n(\mathcal{M}_2, \hat \tau_{K})$ with $\mathcal{M}_2$ from Figure~1 (right) in the main paper, 10\,000 random seeds.}
\label{tab:results}

\begin{tabular}{@{}lcc@{}}
\toprule
Sample size &
\multicolumn{1}{c}{$O$ (Mean $\pm$ SD)} &
\multicolumn{1}{c}{$\hat O_n$ (Mean $\pm$ SD)} \\ \midrule
10   & 0.1486 (0.2599) & \textbf{0.1412 (0.2906)} \\
20   & 0.0511 (0.0814) & \textbf{0.0477 (0.0779)} \\
30   & 0.0303 (0.0468) & \textbf{0.0280 (0.0435)} \\
40   & 0.0216 (0.0321) & \textbf{0.0205 (0.0312)} \\
50   & 0.0169 (0.0246) & \textbf{0.0161 (0.0240)} \\
100  & 0.0082 (0.0116) & \textbf{0.0079 (0.0111)} \\
150  & \textbf{0.0053 (0.0077)} & \textbf{0.0053 (0.0072)} \\
200  & \textbf{0.0040 (0.0056)} & 0.0041 (0.0056) \\
1000 & \textbf{0.0008 (0.0011)} & 0.0009 (0.0012) \\ \bottomrule
\end{tabular}
\end{table}

\begin{table}[htbp]
\centering
\footnotesize

\caption{Mean squared error (MSE) of the adjustment set with the smallest estimated variance, data sampled from causal model $\mathcal{M}_1$ in Figure~1 (left) in the main paper, 10\,000 random seeds.  Results from Table~1 in the main paper are included for comparison.}
\label{tab:results2}

\begin{tabular}{@{}lccc@{}}
\toprule
Sample size &
\multicolumn{1}{c}{MSE (Mean $\pm$ SD)} &
\multicolumn{1}{c}{MSE for $O$ (Mean $\pm$ SD)} &
\multicolumn{1}{c}{MSE for $\hat O_n$ (Mean $\pm$ SD)} \\ \midrule
10   & \textbf{0.0003 (0.0004)} & 0.1234 (0.2379) & 0.0926 (0.2343) \\
20   & \textbf{0.0002 (0.0002)} & 0.0403 (0.0622) & 0.0306 (0.0673) \\
30   & \textbf{0.0002 (0.0001)} & 0.0247 (0.0373) & 0.0182 (0.0374) \\
40   & \textbf{0.0002 (0.0001)} & 0.0172 (0.0252) & 0.0135 (0.0270) \\
50   & \textbf{0.0002 (0.0001)} & 0.0136 (0.0199) & 0.0103 (0.0201) \\
100  & \textbf{0.0002 (0.0001)} & 0.0064 (0.0091) & 0.0048 (0.0095) \\
500  & \textbf{0.0002 (0.0000)} & 0.0012 (0.0017) & 0.0010 (0.0018) \\
1000 & \textbf{0.0002 (0.0000)} & 0.0006 (0.0009) & 0.0005 (0.0009) \\ \bottomrule
\end{tabular}
\end{table}

\end{document}